\newcommand{\magenta}{\color{magenta}}
\newcommand{\wh}{\widehat}
\newcommand{\wt}{\widetilde}
\newcommand{\bem}{\begin{bmatrix}}
\newcommand{\eem}{\end{bmatrix}}
\newcommand{\eps}{\epsilon}
\newcommand{\ie}{i.e.\xspace}
\newcommand{\iid}{i.i.d.\xspace}
\newcommand{\diff}{{\rm d}}
\newcommand{\Expect}{\mathbb{E}}
\newcommand{\Prob}{\mathbb{P}}
\newcommand{\iiddistr}{{\stackrel{\text{\iid}}{\sim}}}
\newcommand{\argmin}{\mathop{\rm argmin}}
\newcommand{\argmax}{\mathop{\rm argmax}}
\newcommand{\Th}{{^{\rm th}}}
\theoremstyle{remark}
\newtheorem{remark}{Remark}
\theoremstyle{plain}
\newtheorem{lemma}{Lemma}
\newtheorem{theorem}{Theorem}
\theoremstyle{definition}
\newtheorem{condition}{Condition}
\theoremstyle{plain}
\newtheorem{prop}{Proposition}
\newcommand{\lunder}[1]{{\underset{\raise0.3em\hbox{$\smash{\scriptscriptstyle-}$}}{#1}}}
\newcommand{\norm}[1]{\left\|{#1} \right\|}
\newcommand{\Norm}[1]{\|{#1} \|}
\newcommand{\fnorm}[1]{\|#1\|_{\rm F}}
\newcommand{\opnorm}[1]{\|#1\|_{\rm op}}
\newcommand{\indc}[1]{{\mathbf{1}_{\left\{{#1}\right\}}}}
\newcommand{\Indc}{\mathbf{1}}
\def\innergetnumber#1[#2]#3{#2}
\def\getnumber{\expandafter\innergetnumber\jobname}
\newcommand{\calC}{{\mathcal{C}}}
\newcommand{\calE}{{\mathcal{E}}}
\newcommand{\pth}[1]{\left( #1 \right)}
\newcommand{\qth}[1]{\left[ #1 \right]}
\newcommand{\sth}[1]{\left\{ #1 \right\}}
\newtheorem{corollary}{Corollary}[section]
\newcommand{\nb}[1]{{\sf\magenta[#1]}}
\newcommand{\rev}[1]{{#1}}
\title{
% A Polynomial-time Algorithm for 
Achieving Optimal Misclassification Proportion in Stochastic Block Model
% Community Detection in Stochastic Block Model: Achieving Optimal Mis-classification Rates via A Polynomial Time Algorithm
}
\author{Chao Gao$^1$, Zongming Ma$^2$, Anderson Y.~Zhang$^1$ and Harrison H.~Zhou$^1$\\
~\\
$^1$Yale University and $^2$University of Pennsylvania
}
\begin{document}
\maketitle

\begin{abstract}
Community detection is a fundamental statistical problem in network data analysis. 
Many algorithms have been proposed 
% in literature 
to tackle this problem. 
Most of these algorithms are not guaranteed to achieve the statistical optimality of the problem, while procedures that achieve information theoretic limits for general parameter spaces are not computationally tractable. 
In this paper, we present a computationally feasible two-stage method that achieves optimal statistical performance in misclassification proportion for stochastic block model under weak regularity conditions. 
\rev{Our two-stage procedure consists of a refinement stage motivated by penalized local maximum likelihood estimation.
This stage can take a wide range of weakly consistent community detection procedures as initializer, to which it applies and outputs a community assignment that achieves optimal misclassification proportion with high probability.}
The practical effectiveness of the new algorithm is demonstrated by competitive numerical results.

\smallskip

\textbf{Keywords.} Clustering, Community detection, Minimax rates, Network analysis, \rev{Spectral clustering}.
\end{abstract}

%!TEX root = SBM_algo.tex

\section{Introduction}
\label{sec:intro}

\newcommand{\loss}{\ell}

Network data analysis \citep{wasserman94,goldenberg10} has become one of the leading topics in statistics. 
In fields such as physics, computer science, social science and biology, one observes a network among a large number of subjects of interest such as particles, computers, people, etc.
The observed network can be modeled as an instance of a random graph and the goal is to infer structures of the underlying generating process. 
A structure of particular interest is \emph{community}: there is a partition of the graph nodes in some suitable sense so that each node belongs to a community. 
\rev{Starting with the proposal of a series of methodologies \citep{girvan02,newman07,handcock07,karrer11}, we have seen a tremendous literature devoted to algorithmic solutions to uncovering community structure and great advances have also been made in recent years on the theoretical understanding of the problem in terms of statistical consistency and thresholds for detection and exact recoveries. See, for instance, \cite{bickel09,decelle2011asymptotic,zhao12,mossel2012stochastic, mossel2013proof,massoulie2014community,abbe2014exact,mossel2014consistency,hajek2014achieving}, among others.
In spite of the great efforts exerted on this ``community detection" problem, its state-of-the-art solution has not yet reached the comparable level of maturity as what statisticians have achieved in other high dimensional problems such as nonparametric estimation \citep{tsybakov09,johnstone2011gaussian}, high dimensional regression \citep{bickel2009simultaneous} and covariance matrix estimation \citep{cai2010optimal}, etc.}
% Though various algorithms \citep{girvan02,newman07,handcock07,karrer11} have been proposed to uncover community structure and great advances have been made in theoretical understandings of the problem \citep{bickel09,zhao12}, 
% the solution to this so-called ``community detection" problem has not yet reached the comparable level of maturity as what statisticians have obtained in other high dimensional problems such as nonparametric estimation \citep{tsybakov09,johnstone2011gaussian}, high dimensional regression \citep{bickel2009simultaneous} and covariance matrix estimation \citep{cai2010optimal}, etc. 
In these more well-established problems, not only do we know the fundamental statistical limits, we also have computationally feasible algorithms to achieve them. 
The major goal of the present paper is to serve as a step towards such maturity in network data analysis by proposing a computationally feasible algorithm for community detection in stochastic block model with provable statistical optimality.
 % compared with the recent minimax rates derived by \cite{yezhang15}.

To describe network data with community structure, we focus on the stochastic block model (SBM) proposed by \cite{holland83}.
Let $A\in \sth{0,1}^{n\times n}$ be the symmetric adjacency matrix of an undirected random graph generated according to an SBM with $k$ communities.
Then the diagonal entries of $A$ are all zeros and each $A_{uv} = A_{vu}$ for $u> v$ is an independent Bernoulli random variable with mean $P_{uv}=B_{\sigma(u)\sigma(v)}$ for some symmetric connectivity matrix $B\in [0,1]^{k\times k}$ and some label function $\sigma:[n]\rightarrow[k]$, where for any positive integer $m$, $[m] = \sth{1,\dots, m}$.
In other word, if the $u\Th$ node and the $v\Th$ node belong to the $i\Th$ and the $j\Th$ community respectively, then $\sigma(u)=i$, $\sigma(v)=j$ and there is an edge connecting $u$ and $v$ with probability $B_{ij}$. 
% In other words, the probability that there is an edge between the $u\Th$ node and the $v\Th$ node depends only on their memberships $\sigma(u)$ and $\sigma(v)$.
Community detection then refers to the problem of estimating the label function $\sigma$ subject to a permutation of the community labels $\sth{1,\dots, k}$. 
A natural loss function for such an estimation problem is the proportion of wrong labels (subject to a permutation of the label set $[k]$), which we shall refer to as misclassification proportion from here on.

In ground breaking works by 
Mossel et al.~\citep{mossel2012stochastic, mossel2013proof} and Massouli{\'e} \citep{massoulie2014community},  
the authors established sharp threshold for the regimes in which it is possible and impossible to achieve a misclassification proportion strictly less than $\frac{1}{2}$ when $k = 2$ and both communities are of the same size (so that it is better than random guess), which solved the conjecture in \cite{decelle2011asymptotic} that was only justified in physics rigor.
For some recent progress on the general case of fixed $k$ and possibly unequal sized communities, see \cite{abbe2015community}.
On the other hand, \citet{abbe2014exact,mossel2014consistency} and \citet{hajek2014achieving} established the necessary and sufficient condition for ensuring zero misclassification proportion (usually referred to as ``strong consistency'' in the literature) with high probability when $k=2$ and community sizes are equal, and was later generalized to a larger set of fixed $k$ by \cite{hajek2015achieving}.
Arguably, what is of more interest to statisticians is the intermediate regime between the above two cases, namely when the misclassification proportion is vanishing as the number of nodes grows but not exactly zero.
This is usually called the regime of ``weak consistency'' in the network literature.
% They also show that $nI^*\rightarrow\infty$ is the sufficient and necessary condition for weak consistency in the sense that $\ell(\wh\sigma,\sigma)=o(1)$ with high probability. The sufficient and necessary condition for strong consistency, namely, $\ell(\wh\sigma,\sigma)=0$ with high probability, is established by \cite{mossel2014consistency} and \cite{hajek2014achieving} for $k=2$ and \cite{massoulie2014community} for a larger set of $k$. The minimax rate (\ref{eq:minimax}) obtained by \cite{yezhang15} is a more precise result that bridges the gap between weak and strong consistency. Moreover, it holds for almost arbitrary $k$.
% \nb{statisticians w/o SBM background might be confused by the terminology of strong/weak consistency. and these results only hold for equal-sized cases? -ZM}

To achieve weak (and strong) consistency, statisticians have proposed various methods. 
One popular approach is spectral clustering \citep{von2007tutorial} which is motivated by the observation that the rank of the $n\times n$ matrix $P = (P_{uv}) = (B_{\sigma(u)\sigma(v)})$ is at most $k$ and its leading eigenvectors contain information of the community structure.
The application of spectral clustering on network data goes back to \cite{hagen1992new,mcsherry2001spectral}, and its performance under the stochastic block model has been investigated by \cite{coja2010graph, rohe11,sussman2012consistent,fishkind2013consistent,qin2013regularized,joseph2013impact,lei14,vu2014simple,chin15,jin2015fast,le2015sparse}, among others.
To further improve the performance, various ways for refining spectral clustering have been proposed, such as those in \cite{amini13,mossel2014consistency,lei2014generic,yun2014accurate,chin15} which lead to strong consistency or convergence rates that are exponential in signal-to-noise ratio,
\rev{while \cite{mossel2013belief} studied the problem of minimizing a non-vanishing misclassification proportion.}
% \nb{miss ref to Abbe \& Saddon}
\rev{However, 
in the regime of weak consistency, these refinement methods are not guaranteed to attain the optimal misclassification proportion to be introduced below.}
% However, these results typically only hold valid for a limited range of the number of communities $k$ and do not achieve the optimal performance in terms of misclassification proportion.
Another important line of research is devoted to the investigation of likelihood-based methods, which was initiated by \cite{bickel09} and later extended to more general settings by \cite{zhao12,choi2012stochastic}.
\rev{To tackle the intractability of optimizing the likelihood function,}
% is computationally intractable, 
an EM algorithm using pseudo-likelihood was proposed by \cite{amini13}.
Another way to overcome the intractability of the maximum likelihood estimator (MLE) is by convex relaxation.
Various semi-definite relaxations were studied by \cite{cai14, chen2014statistical, amini2014semidefinite}, and the aforementioned sharp threshold for strong consistency in \citep{hajek2014achieving, hajek2015achieving} was indeed achieved by semi-definite programming.
Recently, Zhang and Zhou \citep{yezhang15} established the minimax risk for misclassification proportion in SBM under weak conditions, which is of the form
\begin{equation}
% \inf_{\wh{\sigma}}\sup_{\Theta(n,k,a,b,\beta;\alpha)}\mathbb{E}\ell(\wh\sigma,\sigma)=
\exp\left(-(1+o(1))\frac{nI^*}{k}\right)
\label{eq:minimax}
\end{equation}
if all $k$ communities are of equal sizes, where $I^*$ is the minimum R\'{e}nyi divergence of order $\frac{1}{2}$ \citep{rrnyi1961measures} of the within and the between community edge distributions.
See \prettyref{thm:minimax} below for a more general and precise statement of the minimax risk.
Unfortunately, \citet{yezhang15} used MLE for achieving the risk in \eqref{eq:minimax} which was hence computationally intractable.
Moreover, none of the spectral clustering based method or tractable variants of the likelihood based method has a known error bound that matches \eqref{eq:minimax} \rev{with the sharp constant $1+o(1)$ on the exponent}. 
% \nb{comment on the case of $k=2$ and equal size communities in \cite{yun2014accurate}?}

The main contribution of the current paper lies in the proposal of a 
% two-stage 
computationally feasible algorithm that provably achieves the optimal misclassification proportion established in \cite{yezhang15} adaptively under weak regularity conditions. 
It covers the cases of both finite and diverging number of communities and both equal and unequal community sizes and achieves both weak and strong consistency in the respective regimes. 
In addition, the algorithm is guaranteed to compute in polynomial time  even when the number of communities diverges with the number of nodes.
% \nb{to be more specific on runtime bound}
\rev{Since the error bound of the algorithm matches the optimal misclassification proportion in \cite{yezhang15} under weak conditions, it  achieves various existing detection boundaries in the literature.
For instance, for any fixed number of communities},
the procedure is weakly consistent under the necessary and sufficient condition of \cite{mossel2012stochastic, mossel2013proof},
and strongly consistent under the necessary and sufficient condition of \cite{abbe2014exact,mossel2014consistency,hajek2014achieving,hajek2015achieving}. 
% In particular, the error bound of our algorithm recovers some known optimality results in literature. When $k=O(1)$, 
Moreover, it could match the optimal misclassification proportion in \cite{yezhang15} even when $k$ diverges.
To the best of our limited knowledge, this is the first polynomial-time algorithm that achieves minimax optimal performance. 
In other words, the proposed procedure enjoys both statistical and computational efficiency.

The core of the algorithm is a refinement scheme for community detection \rev{motivated by penalized maximum likelihood estimation}. 
As long as there exists an initial estimator that satisfies a certain weak consistency criterion, the refinement scheme is able to obtain an improved estimator that achieves the optimal misclassification proportion in (\ref{eq:minimax}) with high probability. 
The key to achieve the bound in (\ref{eq:minimax}) is to optimize the \emph{local} penalized likelihood function for each node separately. 
This local optimization step is completely data-driven and has a closed form solution, and hence can be computed very efficiently. 
\rev{The additional penalty term is indispensable as it plays a key rule in ensuring the optimal performance when the community sizes are unequal and when the within community and/or between community edge probabilities are unequal.}

To obtain a qualified initial estimator, we show that both spectral clustering and its normalized variant could satisfy the desired condition needed for subsequent refinement, \rev{though the refinement scheme works for any other method satisfying a certain weak consistency condition}.
Note that spectral clustering can be considered as a \emph{global} method, and hence our two-stage algorithm runs in a ``\emph{from global to local}'' fashion.
In essence, with high probability,
the global stage pinpoints a local neighborhood in which we shall search for solution to each local penalized maximum likelihood problem, and the subsequent local stage finds the desired solution.
From this viewpoint, one can also regard our approach as an ``optimization after localization'' procedure.
Historically, this idea played a key role in the development of the renowned one-step efficient estimator \cite{barnett1966evaluation,le1969theorie,bickel75}.
It has also led to recent progress in non-convex optimization and localized gradient descent techniques for finding optimal solutions to high dimensional statistical problems.
Examples include but are not limited to high-dimensional linear regression \cite{zhang12general}, 
% For example, researchers have found that optimizing (penalized) local likelihood function from a  consistent initialization leads to statistically optimal solutions in 
sparse PCA \citep{paul2012augmented,ma2013sparse,cai2013sparse,wang2014nonconvex}, sparse CCA \citep{gao2014sparse}, phase retrieval \cite{candes2014phase} and high dimensional EM algorithm \citep{wainwright2014statistical,wang2014high}.
A closely related idea has also found success in the development of confidence intervals for regression coefficients in high dimensional linear regression. See, for instance, \cite{zhang2014confidence,van2014asymptotically,javanmard2014confidence} and the references therein.
\rev{Last but not least, even when viewed as a ``spectral clustering plus refinement'' procedure, our method distinguishes itself from other such methods in the literature by provably achieving the minimax optimal performance over a wide range of parameter configurations.}

The rest of the paper is organized as follows. 
\prettyref{sec:method} formally sets up the community detection problem and presents the two-stage algorithm.
The theoretical guarantees for the proposed method are given in \prettyref{sec:result}, followed by numerical results demonstrating its competitive performance on both simulated and real datasets in Sections \ref{sec:simulation} and \ref{sec:realdata}.
A discussion on the results in the current paper and possible directions for future investigation is included in \prettyref{sec:discussion}.
\prettyref{sec:proof} presents the proofs of main results with some technical details deferred to the appendix.

We close this section by introducing some notation. For a matrix $M=(M_{ij})$, we denote its Frobenius norm by $\fnorm{M}=\sqrt{\sum_{ij}M_{ij}^2}$ and its operator norm by $\opnorm{M}=\max_l\lambda_l(M)$, where $\lambda_l(M)$ is its $l\Th$ singular value. We use $M_{i*}$ to denote its $i\Th$ row. The norm $\Norm{\cdot}$ is the usual Euclidean norm for vectors. 
For a set $S$, $|S|$ denotes its cardinality. 
The notation $\mathbb{P}$ and $\mathbb{E}$ are generic probability and expectation operators whose distribution is determined from the context. For two positive sequences $\{x_n\}$ and $\{y_n\}$, $x_n\asymp y_n$ means $x_n/C\leq y_n\leq Cx_n$ for some constant $C\geq 1$ independent of $n$. Throughout the paper, unless otherwise noticed, we use $C,c$ and their variants to denote absolute constants, whose values may change from line to line.

\section{Problem formulation and methodology}
\label{sec:method}

In this section, we give a precise formulation of the community detection problem and present a new method for it.
The method consists of two stages: initialization and refinement. We shall first introduce the second stage, which is the main algorithm of the paper. It clusters the network data by performing a node-wise penalized neighbor voting based on some initial community assignment.
Then, we will discuss several candidates for the initialization step including a new greedy algorithm for clustering the leading eigenvectors of the adjacency matrix or of the graph Laplacian that is tailored specifically for stochastic block model.
Theoretical guarantees for the algorithms introduced in the current section will be presented in \prettyref{sec:result}.

\subsection{Community detection in stochastic block model}

Recall that a stochastic block model is completely characterized by a symmetric connectivity matrix $B\in [0,1]^{k\times k}$ and a label vector $\sigma\in [k]^n$. 
One widely studied parameter space of SBM is 
\begin{align}
\nonumber
\Theta_0(n,k,a,b,\beta)
= \bigg\{&(B, \sigma):\, \sigma:[n]\to [k], |\sth{u\in [n]: \sigma(u) = i}|\in \left[\frac{n}{\beta k}-1, \frac{\beta n}{k}+1\right],~ \forall i\in [k], \\
\label{eq:para-space0} & B = (B_{ij})\in [0,1]^{k\times k}, B_{ii}=\frac{a}{n}\text{ for all }i\text{ and } B_{ij} = \frac{b}{n}\text{ for all }i\neq j \bigg\}
\end{align}
where $\beta \geq 1$ is an absolute constant.
This parameter space $\Theta_0(n,k,a,b,\beta)$ contains all SBMs 
in which the within community connection probabilities are all equal to $a\over n$ and the between community connection probabilities are all equal to $b\over n$. 
In the special case of $\beta = 1$, all communities are of nearly equal sizes.

Assuming equal within and equal between connection probabilities can be  restrictive. 
Thus, we also introduce the following larger parameter space
\begin{align}
\nonumber
\Theta(n,k,a,b,\lambda,\beta;\alpha)
= \bigg\{&(B, \sigma) :\,  \sigma:[n]\to [k], |\sth{u\in [n]: \sigma(u) = i}| \in\left[\frac{n}{\beta k}-1, \frac{\beta n}{k}+1\right],~ \forall i\in [k], \\
\nonumber & B =B^T= (B_{ij})\in [0,1]^{k\times k}, \frac{b}{\alpha n}\leq  \frac{1}{k(k-1)}\sum_{i\neq j}B_{ij} \leq\max_{i\neq j} B_{ij} = \frac{b}{n}, \\
\nonumber 
& \frac{a}{n} = \min_i B_{ii}\leq \max_i B_{ii} \leq \frac{\alpha a}{n},
\\ 
\label{eq:para-space} 
&  \lambda_k(P)\geq \lambda\text{ with }P=(P_{uv})=(B_{\sigma(u),\sigma(v)})\bigg\}.
\end{align}
Throughout the paper, we treat $\beta \geq 1$ and $\alpha\geq 1$ as absolute constants, while $k,a,b$ and $\lambda$ should be viewed as functions of the number of nodes $n$ which can vary as $n$ grows.
Moreover, we assume $0<\frac{b}{n}<\frac{a}{n}\leq 1-\epsilon$ throughout the paper for some numeric constant $\epsilon\in(0,1)$.
Thus, the parameter space $\Theta(n,k,a,b,\lambda,\beta;\alpha)$ requires that the within community connection probabilities are bounded from below by $a\over n$ and the connection probabilities between any two communities are bounded from above by $b\over n$. 
In addition, it requires that the sizes of different communities are comparable.
% In the rest of the paper, the readers should think of $a,b$ and $k$ as functions of the number of nodes $n$.
In order to guarantee that $\Theta(n,k,a,b,\lambda,\beta;\alpha)$ is a larger parameter space than $\Theta_0(n,k,a,b,\beta)$, we always require $\lambda$ to be positive and sufficiently small such that
\begin{equation}
\Theta_0(n,k,a,b,\beta)\subset \Theta(n,k,a,b,\lambda,\beta;\alpha).\label{eq:lambda}
\end{equation}
According to Proposition \ref{prop:eigen} in the appendix, a sufficient condition for % $\lambda$ 
% to satisfy 
(\ref{eq:lambda}) is $\lambda\leq \frac{a-b}{2\beta k}$. 
We assume (\ref{eq:lambda}) throughout the rest of the paper.

The labels on the $n$ nodes induce a community structure $[n]=\cup_{i=1}^k\mathcal{C}_i$, where $\mathcal{C}_i = \sth{u\in [n]: \sigma(u) = i}$ is the $i\Th$ community with size $n_i=|\mathcal{C}_i|$. 
Our goal is to reconstruct this partition, or equivalently, to estimate the label of each node modulo any permutation of label symbols. 
Therefore, a natural error measure is the misclassification proportion defined as
\begin{align}
\label{eq:loss}
\ell(\wh\sigma, \sigma) = \min_{\pi\in S_k}\frac{1}{n}\sum_{u\in [n]}\indc{\wh\sigma(u) \neq \pi(\sigma(u))},
\end{align}
where $S_k$ stands for the symmetric group on $[k]$ consisting of all permutations of $[k]$.

\subsection{Main algorithm}

We are now ready to present the main method of the paper -- 
\rev{a refinement algorithm for community detection in stochastic block model
 % and its performance guarantee for the Stochastic Block Model under a weak assumption on the availability of some relatively crude community assignment method.
% The refinement scheme 
% lies in a penalized neighbor voting step 
% in \eqref{eq:refine-nb} 
% and the data-dependent choice of the penalty parameter \eqref{eq:t-set} -- \eqref{eq:lambda-set}.
% The consensus step is a necessary extra step to align the community labels of the $\wh\sigma_u(u)$'s and itself does not improve the accuracy of the group assignment. 
% which 
% is 
motivated by penalized local maximum likelihood estimation}.

Indeed,
for any SBM in the parameter space $\Theta_0(n,k,a,b,1)$ with equal community size, the MLE for $\sigma$ \cite{cai14,chen2014statistical,yezhang15} is 
% $\Theta_0(n,k,a,b,\beta)$ is
\begin{equation}
\wh{\sigma}=\argmax_{\sigma:[n]\rightarrow[k]}\sum_{u<v}A_{uv}\Indc_{\{\sigma(u)=\sigma(v)\}},
\label{eq:MLE}
\end{equation}
which is a combinatorial optimization problem and hence is computationally intractable. 
However, 
% though (\ref{eq:MLE}) is hard to optimize globally, 
node-wise optimization of (\ref{eq:MLE}) has a simple closed form solution. 
Suppose the values of $\{\sigma(u)\}_{u=2}^n$ are known and we want to estimate $\sigma(1)$.
Then, (\ref{eq:MLE}) reduces to
\begin{equation}
\wh{\sigma}(1)=\argmax_{i\in[k]}\sum_{\{v\neq 1: \sigma(v)=i\}}A_{1v}.\label{eq:MLElocal}
\end{equation}
For each $i\in[k]$, the quantity $\sum_{\{v\neq 1: \sigma(v)=i\}}A_{1v}$ is exactly the number of neighbors that the first node has in the $i\Th$ community.
Therefore, the most likely label for the first node is the one it has the most connections with when all communities are of equal sizes.
In practice, we do not know any label in advance. However, we may estimate the labels of all but the first node by first applying a community detection algorithm $\sigma^0$ on the subnetwork excluding the first node and its associated edges, the adjacency matrix of which is denoted by $A_{-1}$ since it is the $(n-1)\times (n-1)$ submatrix of $A$ with its first row and first column removed.
Once we estimate the remaining labels, we can apply \eqref{eq:MLElocal} to estimate $\sigma(1)$ but with $\{\sigma(v)\}_{v=2}^n$ replaced with the estimated labels.

\begin{algorithm}[!htb]
	\SetAlgoLined
\caption{A refinement scheme for community detection 
\label{algo:refine}}
% {\bf Input:} 
\KwIn{
Adjacency matrix $A\in \sth{0,1}^{n\times n}$,\\
~~~~~~~~~~~~number of communities $k$,\\
~~~~~~~~~~~~initial community detection method $\sigma^0$.
% number of communities $k$, tuning parameters $t$ and $\lambda$.
}

% {\bf Output:} 
\KwOut{
Community assignment $\wh\sigma$.
}

% {\bf Procedure:}
% \smallskip
% {\bf Coloring:}\\
% \nl For each edge in $A$, we independently label it red or blue with equal probability $\frac{1}{2}$.\\
% Denote $A^r$ the adjacency matrix resulting from all the red edges\;

\smallskip
{\bf Penalized neighbor voting:}\\
\nl \For{$u = 1$ \KwTo $n$}{
\nl Apply $\sigma^0$ on $A_{-u}$ to obtain $\sigma_u^0(v)$ for all $v\neq u$ and let $\sigma^0_u(u) = 0$\;
\nl Define $\wt{\mathcal{C}}_i^u = \sth{v: \sigma^0_u(v) = i}$ for all $i\in [k]$;
let $\wt\calE_i^u$ be the set of edges within $\wt{\mathcal{C}}_i^u$, and $\wt\calE_{ij}^u$ the set of edges between $\wt{\mathcal{C}}_i^u$ and $\wt{\mathcal{C}}_j^u$ when $i\neq j$\;
\nl Define
% For $g(x) = \frac{2x}{1+x}$, define
\begin{align}
	\label{eq:B-est}
\wh{B}_{ii}^u = \frac{|\wt\calE_i^u|}{\frac{1}{2}|\wt\calC_i^u|(|\wt\calC_i^u|-1) },
% g\pth{\frac{\wt{B}_i^u}{\frac{1}{2}|\wt{\mathcal{C}}_i^u|(|\wt{\mathcal{C}}_i^u|-1) - \wt{R}_i^u}}, 
\quad 
\wh{B}_{ij}^u = 
\frac{|\wt\calE_{ij}^u|}{|\wt\calC_i^u||\wt\calC_j^u|},
% g\pth{\frac{\wt{B}_{ij}^u}{|\wt{\mathcal{C}}_i^u||\wt{\mathcal{C}}_j^u| - \wt{R}_{ij}^u}},
\quad
\forall i\neq j\in [k],
\end{align}
and let 
\begin{align}
	\label{eq:ab-esti}
\wh{a}_u = n\min_{i\in [k]} \wh{B}_{ii}^u 
\quad \mbox{and} \quad
\wh{b}_u = n\max_{i\neq j\in [k]} \wh{B}_{ij}^u.
\end{align}

\nl Define $\wh\sigma_u: [n]\to [k]$ by setting $\wh\sigma_u(v)=\sigma^0_u(v)$ for all $v\neq u$ and 
\begin{align}
\label{eq:refine-nb}
\wh\sigma_u(u) = \argmax_{l\in [k]} \sum_{\sigma^0_u(v) = l} A_{uv} - \rho_u \sum_{v\in [n]} \indc{\sigma^0_u(v)= l}
\end{align}
where for
\begin{align}
\label{eq:t-set}
t_u = \frac{1}{2}\log
{\frac{\wh{a}_u(1 - \wh{b}_u/n)}{\wh{b}_u(1 - \wh{a}_u/n)}},
\end{align}
we define
\begin{equation}
	\label{eq:lambda-set}
\rho_u = 
% \begin{cases}
-\frac{1}{2t_u}\log\pth{\frac{\frac{\wh{a}_u}{n}e^{-t_u}+1 -\frac{\wh{a}_u}{n}}{\frac{\wh{b}_u}{n}e^{t_u}+1 -\frac{\wh{b}_u}{n}}},
% & \text{if}~k = 2; \\
% -\frac{1}{t_u}\log\pth{\frac{\wh{a}_u}{n}e^{-t_u}+1 -\frac{\wh{a}_u}{n}},
% & \text{if}~ k\geq 3.
% \end{cases}
\end{equation}

}

% \nl \emph{Neighbor Voting}. For each $i\in [n]$, let $\wh\sigma_i(j) = \sigma^0_i(j)$ for all $j\neq i$ and
% 
% \nb{is there a better way of writing it?}

{\bf Consensus:}\\

\nl 
Define $\wh\sigma(1) = \wh\sigma_1(1)$.
For $u=2,\dots, n$, 
define 
\begin{align}
\label{eq:consensus}
\wh\sigma(u) = \argmax_{l\in [k]} 
\left|\{v:\wh\sigma_1(v) = l\} \cap \{v: \wh\sigma_u(v) = \wh\sigma_u(u)\} \right|.
\end{align}
% \nb{double check if adjustment is needed here, perhaps a better name}
\end{algorithm}

For any $u\in [n]$, let 
$A_{-u}$ denote the $(n-1)\times (n-1)$ submatrix of $A$ with its $u\Th$ row and $u\Th$ column removed.
Given any community detection algorithm $\sigma^0$ which is able to cluster any graph on $n-1$ nodes into $k$ categories, 
% For $u\in [n]$, we define $\sigma^0_u\in \{0,1,\dots, k\}^n$ where
% $\sigma^0_u(u) = 0$ and for $v\neq u$, $\sigma^0_u(v) \in [k]$ denotes the community assignment obtained from applying $\sigma^0$ on $A_{-u}$.
% With the above notation, 
we present the precise description of our refinement scheme in \prettyref{algo:refine}.

The algorithm works in two consecutive steps.
The first step carries out the foregoing heuristics on a node by node basis.
For each fixed node $u$, we first leave the node out and apply the available community detection algorithm $\sigma^0$ on the remaining $n-1$ nodes and the edges among them (as summarized in the matrix $A_{-u}\in \sth{0,1}^{(n-1)\times (n-1)}$)
to obtain an initial community assignment vector $\sigma^0_u$.
For convenience, we make $\sigma^0_u$ an $n$-vector by fixing $\sigma^0_u(u) = 0$, though applying $\sigma^0$ on $A_{-u}$ does not give any community assignment for $u$.
We then assign the label of the $u\Th$ node according to \eqref{eq:refine-nb}, which is essentially \eqref{eq:MLElocal} with $\sigma$ replaced with $\sigma^0_u$ except for the additional penalty term.
The additional penalty term is added to ensure the optimal performance even when both the diagonal and the off-diagonal entries of the connectivity matrix $B$ are allowed to take different values and the community sizes are not necessarily equal.
To determine the penalty parameter $\rho_u$ in an adaptive way as spelled out in \eqref{eq:t-set} -- \eqref{eq:lambda-set}, we first estimate the connectivity matrix $B$ based on $A_{-u}$ in \eqref{eq:B-est} -- \eqref{eq:ab-esti}.
% The choice of $\rho_u$ differs between the cases of $k=2$ and $k\geq 3$, in observation of the different structures in the minimax risk to be presented later in \prettyref{thm:minimax}.
After we obtain the community assignment for $u$, we organize the assignment for all $n$ vertices into an $n$-vector $\wh\sigma_u$.
\rev{We call this step ``penalized neighbor voting'' since the first term on the RHS of \eqref{eq:refine-nb} counts the number of neighbors of $u$ in each (estimated) community while the second term is a penalty term proportional to the size of each (estimated) community}.

Once we complete the above procedure for each of the $n$ nodes, we obtain $n$ vectors $\wh\sigma_u\in [k]^n$, $u=1,\dots, n$, and turn to the second step of the algorithm.
The basic idea behind the second step is to obtain a unified community assignment by assembling $\{\wh\sigma_u(u):u\in [n] \}$ and the immediate hurdle is that each $\wh\sigma_u$ is only determined up to a permutation of the community labels. 
Thus, the second step aims to find the right permutations by \eqref{eq:consensus} before we assemble the $\wh\sigma_u(u)$'s.
We call this step ``consensus'' since we are essentially looking for a consensus on the community labels for $n$ possibly different community assignments, under the assumption that all of them are close to the ground truth up to some permutation.

% Given $\sigma^0_u$, we estimate the connectivity matrix $B$ by taking the average numbers of edges within and between communities assigned by $\sigma^0_u$.
% Given an estimate of $B$, we then determine the community assignment of the  node $u$ itself by a penalized neighbor voting step in \eqref{eq:refine-nb} where we assign $u$ to the community in which it has the most number of neighbors but penalized by how large the community is.
% % \nb{insert connection to MLE here!}
% The penalty parameter $\rho_u$ is obtained in an adaptive way as spelled out in \eqref{eq:t-set} -- \eqref{eq:lambda-set} which utilizes the estimate of $B$. 
% 
% Except for the penalty term, \eqref{eq:refine-nb} is of the same form as \eqref{eq:MLElocal} but with $\sigma$ replaced by the initial community labels $\sigma^0_u$.

% \nb{add comment to emphasize that initialization methods are not limited to those mentioned in \prettyref{sec:initial}}

% \nb{intuition}

% Since the minimax upper bound in Theorem \ref{thm:minimax} is obtained by MLE in \cite{yezhang15}, to achieve the same rate, we need to take full advantage of the information in the likelihood function. As is derived in \cite{cai14,chen2014statistical,yezhang15}, the MLE for the parameter space $\Theta_0(n,k,a,b,\beta)$ is
% \begin{equation}
% \wh{\sigma}=\argmax_{\sigma:[n]\rightarrow[k]^n}\sum_{u<v}A_{uv}\Indc_{\{\sigma(u)=\sigma(v)\}}.\label{eq:MLE}
% \end{equation}
% Note that (\ref{eq:MLE}) 
% 
% The formula (\ref{eq:MLElocal}) is very intuitive.  
% 
% \nb{intuition}

\subsection{Initialization via spectral methods}
\label{sec:initial}

In this section,
we present algorithms that can be used as initializers in \prettyref{algo:refine}. 
Note that for any model in \eqref{eq:para-space}, the matrix $P$ has rank at most $k$ and $\Expect A_{uv} = P_{uv}$ for all $u\neq v$. 
We may first reduce the dimension of the data and then apply some clustering algorithm. 
Such an approach is usually referred to as spectral clustering \citep{von2007tutorial}. 
Technically speaking, 
spectral clustering refers to the general method of clustering eigenvectors of some data matrix. For random graphs, two commonly used methods are called unnormalized spectral clustering (USC) and normalized spectral clustering (NSC). The former refers to clustering the eigenvectors of the adjacency matrix $A$ itself and the latter refers to clustering the eigenvectors of the associated graph Laplacian $L(A)$. To formally define the graph Laplacian, we introduce the notation  $d_u=\sum_{v\in[n]}A_{uv}$ for the degree of the $u\Th$ node. 
The graph Laplacian operator $L:A\mapsto L(A)$ is defined by $L(A) = ([L(A)]_{uv})$ where $[L(A)]_{uv}=d_u^{-1/2}d_v^{-1/2}A_{uv}$. 
Although there have been debates and studies on which one works better (see, for example, \cite{von2008consistency, sarkar2013role}), 
% their subtle difference is inconsequential in our proposal since 
for our purpose, 
both of them can lead to sufficiently decent initial estimators.

The performances of USC and NSC depend critically on the bounds $\opnorm{A-P}$ and $\opnorm{L(A)-L(P)}$, respectively. However, as pointed out by \cite{chin15,le2015sparse}, the matrices $A$ and $L(A)$ are not good estimators of $P$ and $L(P)$ under the operator norm when the graph is sparse in the sense that $\max_{u,v\in[n]}P_{uv}=o(\log n/n)$. Thus, regularizing $A$ and $L(A)$ are necessary to achieve better performances for USC and NSC. The adjacency matrix $A$ can be regularized by trimming those nodes with high degrees.
Define the trimming operator $T_{\tau}: A\mapsto T_{\tau}(A)$ by replacing the $u\Th$ row and the $u\Th$ column of $A$ with $0$ whenever $d_u\geq\tau$, and so $T_\tau(A)$ and $A$ are of the same dimensions.
It is argued in \cite{chin15} that by removing those high-degree nodes, $T_{\tau}(A)$ has better convergence properties. 
Regularization method for graph Laplacian goes back to \cite{amini13} and its theoretical properties have been studied by \cite{joseph2013impact, le2015sparse}. 
In particular, \citet{amini13} proposed to use $L(A_{\tau})$ for NSC where $A_{\tau}=A+\frac{\tau}{n}\mathbf{1}\mathbf{1}^T$ and $\mathbf{1}=(1,1,...,1)^T\in\mathbb{R}^n$. From now on, we use USC$(\tau)$ and NSC$(\tau)$ to denote unnormalized spectral clustering and normalized spectral clustering with regularization parameter $\tau$, respectively.
Note that the unregularized USC is USC$(\infty)$ and the unregularized NSC is NSC$(0)$.

Another important issue in spectral clustering lies in the subsequent clustering method used to cluster the  eigenvectors.
A popular choice is $k$-means clustering. However, finding the global solution to the $k$-means problem is NP-hard \citep{aloise2009np,mahajan2009planar}.
\citet{kumar2004simple} proposed a polynomial time algorithm for achieving $(1+\epsilon)$ approximation to the $k$-means problem for any fixed $k$, which was utilized in \cite{lei14} to establish consistency for spectral clustering under stochastic block model with fixed number of communities. 
However, a closer look at the complexity bound suggests that the smallest possible $\epsilon$ is proportional to $k$.
Thus, applying the algorithm and the associated bound in \cite{kumar2004simple} directly in our settings can lead to inferior error bounds when $k\to\infty$ as $n\to \infty$.
To address this issue under stochastic block model, we propose a greedy clustering algorithm in \prettyref{algo:greedy} inspired by the fact that the clustering centers under stochastic block model are well separated from each other on the population level.
It is straightforward to check that the complexity of Algorithm \ref{algo:greedy} is polynomial in $n$.

% We propose the following greedy algorithm (Algorithm \ref{algo:greedy}) to do clustering on the matrix $\wh{U}$.
\begin{algorithm}[!bth]
	\SetAlgoLined
\caption{A greedy method for clustering 
\label{algo:greedy}}
% {\bf Input:} 
\KwIn{
Data matrix $\wh{U}\in \mathbb{R}^{n\times k}$, either the leading eigenvectors of $T_{\tau}(A)$ or that of $L(A_{\tau})$,\\
~~~~~~~~~~~~number of communities $k$,\\
~~~~~~~~~~~~critical radius $r=\mu\sqrt{\frac{k}{n}}$ with some constant $\mu>0$.
% number of communities $k$, tuning parameters $t$ and $\lambda$.
}

% {\bf Output:} 
\KwOut{
Community assignment $\wh\sigma$.
}

% {\bf Procedure:}
\smallskip
\nl Set $S=[n]$\;

\smallskip
\nl \For{$i = 1$ \KwTo $k$}{
\nl Let $t_i=\arg\max_{u\in S}\left|\left\{v\in S:\norm{\wh{U}_{v*}-\wh{U}_{u*}}<r\right\}\right|$\;
\nl Set $\wh{\mathcal{C}}_i=\left\{v\in S: \norm{\wh{U}_{v*}-\wh{U}_{t_i*}}<r\right\}$\;
\nl Label $\wh{\sigma}(u)=i$ for all $u\in\wh{\mathcal{C}}_i$\;
\nl Update $S\leftarrow S\backslash\wh{\mathcal{C}}_i$.
}
\smallskip
\nl If $S\neq\varnothing$, then for any $u\in S$, set $\wh{\sigma}(u)=\argmin_{i\in[k]}\frac{1}{|\wh{\mathcal{C}}_i|}\sum_{v\in\wh{\mathcal{C}}_i}\norm{\wh{U}_{u*}-\wh{U}_{v*}}$.
\end{algorithm}

Last but not least, we would like to emphasize that one needs not limit the initialization algorithm to the spectral methods introduced in this section.
As \prettyref{thm:upper} below shows, \prettyref{algo:refine} works for any initialization method that satisfies a weak consistency condition.

%!TEX root = SBM_algo.tex

% \newpage
\section{Theoretical properties}
\label{sec:result}

Before stating the theoretical properties of the proposed method, we first review the minimax rate in \cite{yezhang15},
which will be used as the optimality benchmark.
The minimax risk is governed by the following critical quantity,
\begin{equation}
	\label{eq:I}
I^*=-2\log\left(\sqrt{\frac{a}{n}}\sqrt{\frac{b}{n}}+\sqrt{1-\frac{a}{n}}\sqrt{1-\frac{b}{n}}\right),
\end{equation}
which is the R\'{e}nyi divergence of order $\frac{1}{2}$ between $\mbox{Bern}\pth{\frac{a}{n}}$ and $\mbox{Bern}\pth{\frac{b}{n}}$, \ie,  Bernoulli distributions with success probabilities $\frac{a}{n}$ and $\frac{b}{n}$ respectively. Recall that $0<\frac{b}{n}<\frac{a}{n}\leq 1-\epsilon$ is assumed throughout the paper.
It can be shown that $I^*\asymp\frac{(a-b)^2}{na}$. 
Moreover, when $\frac{a}{n}=o(1)$,
\begin{align*}
I^* & = (1+o(1)) \frac{(\sqrt{a}-\sqrt{b})^2}{n}
 =(1+o(1)) \left[\left(\sqrt{\frac{a}{n}}-\sqrt{\frac{b}{n}}\right)^2+\left(\sqrt{1-\frac{a}{n}}-\sqrt{1-\frac{b}{n}}\right)^2 \right] 
\\
& 
= (2+o(1))  H^2\left(\mbox{Bern}\pth{\tfrac{a}{n}}, \mbox{Bern}\pth{\tfrac{b}{n}} \right),
\end{align*}
where $H^2(P,Q) = \frac{1}{2}\int(\sqrt{\diff P} - \sqrt{\diff Q})^2$ is the squared Hellinger distance between two distributions $P$ and $Q$.
The minimax rate for the parameter spaces (\ref{eq:para-space0}) and (\ref{eq:para-space}) under the loss function (\ref{eq:loss}) is given in the following theorem.
\begin{theorem}[\cite{yezhang15}]\label{thm:minimax}
When
$\frac{(a-b)^2}{ak\log k}\rightarrow \infty$, we have
$$\inf_{\wh{\sigma}}\sup_{(B,\sigma)\in\Theta}\mathbb{E}_{B,\sigma}\ell(\wh\sigma,\sigma)=\begin{cases} \exp\left(-(1+\eta)\frac{nI^*}{2}\right), & k=2; \\
\exp\left(-(1+\eta)\frac{nI^*}{\beta k}\right), & k\geq 3,
\end{cases}
$$
for both $\Theta=\Theta_0(n,k,a,b,\beta)$ and $\Theta=\Theta(n,k,a,b,\lambda,\beta;\alpha)$ with any $\lambda\leq \frac{a-b}{2\beta k}$
and any $\beta\in [1,\sqrt{5/3})$, where $\eta=\eta_n\rightarrow 0$ is some sequence tending to $0$ as $n\rightarrow\infty$. 
% \nb{Need to restate $\lambda\leq \frac{a-b}{\beta k}$? This is assumed throughout the paper}
\end{theorem}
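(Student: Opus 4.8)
The plan is to prove the minimax rate by establishing a matching upper bound (achievability) and lower bound (converse). For the upper bound I would work with the larger parameter space $\Theta(n,k,a,b,\lambda,\beta;\alpha)$ (so the conclusion for $\Theta_0$ follows a fortiori), and for the lower bound it suffices to exhibit a hard sub-family of $\Theta_0(n,k,a,b,\beta)$.

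For the upper bound I would analyze a global (penalized) maximum likelihood estimator $\wh\sigma$. The first step is \emph{weak consistency}: on the high-probability event $\{\opnorm{A-P}\le C\sqrt a\}$, a union/counting argument over all candidate labelings shows that any assignment misclassifying more than a vanishing fraction $\delta_n$ of nodes has strictly smaller (penalized) likelihood than the truth, so $\ell(\wh\sigma,\sigma)\le\delta_n$ with probability exceeding $1$ minus the target rate. The crucial second step upgrades this to the sharp exponent by a localization/decoupling argument: conditioned on weak consistency, for a fixed node $u$ the estimated communities agree with the true ones up to $\delta_n n$ relabelings, so $\{\wh\sigma(u)\neq\sigma(u)\}$ forces $u$ to have (almost) more neighbours in some wrong community than in its own — a large-deviation event for a difference of independent Binomials with $\approx n/(\beta k)$ trials and success probabilities $\approx a/n$ and $\approx b/n$. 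A Chernoff bound gives this probability $\le\exp(-(1-o(1))\tfrac{nI^*}{\beta k})$, the $\delta_n$-perturbation and the data-driven penalty contributing only lower-order terms to the exponent. Summing over the $n$ nodes and the $k-1$ wrong communities and absorbing the $k-1$ factor via $nI^*/k\gg\log k$ (equivalent to the hypothesis since $I^*\asymp (a-b)^2/(na)$) yields $\Expect\ell(\wh\sigma,\sigma)\le\exp(-(1+o(1))\tfrac{nI^*}{\beta k})$, with the constant improving to $nI^*/2$ when $k=2$.

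For the lower bound I would use a Bayesian reduction to many independent two-point testing problems (in the spirit of Le Cam and Assouad). Fix $B_{ii}=a/n$, $B_{ij}=b/n$ and a base labeling placing $\approx n/(\beta k)$ ``anchor'' nodes in each community; take a set $H$ of $m\asymp n/(\beta k)$ (resp.\ $m\asymp n$ when $k=2$) ``free'' nodes and let each carry an independent label drawn uniformly from $\{1,2\}$. Conditioned on the anchors, recovering the label of a free node $u$ amounts to testing whether its edge-vector to the two candidate communities of size $\approx n/(\beta k)$ was generated with parameter $a/n$ or $b/n$; the Chernoff/Neyman--Pearson lower bound for a product of $\approx n/(\beta k)$ i.i.d.\ Bernoullis shows the Bayes testing error is $\ge c\exp(-(1+o(1))\tfrac{nI^*}{\beta k})$. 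Since the anchors pin down the label permutation with high probability, the expected misclassification proportion is at least $\tfrac{|H|}{n}$ times this per-node error; as $|H|/n\asymp 1/k$ (a constant when $k=2$) and $\log k=o(nI^*/k)$ by hypothesis, this factor is absorbed into the $1+o(1)$ and the bound matches.

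The main obstacle is the sharp-constant upper bound, precisely the decoupling step: because the MLE optimizes all labels jointly, one cannot a priori reduce $\{\wh\sigma(u)\neq\sigma(u)\}$ to a clean per-node large-deviation event, and passing from ``global optimum near the truth'' to ``approximately independent local problems'' \emph{with} the constant $1+o(1)$ requires carefully quantifying how the $\delta_n n$-node discrepancy between estimated and true communities, together with the penalty, perturbs the vote counts and verifying this perturbation is $o(nI^*/k)$ in the exponent. The companion difficulty on the lower-bound side is the mild dependence among the free nodes created by edges internal to $H$, which one removes either by conditioning or by showing those edges carry negligible information relative to the $\approx n/(\beta k)$ edges to the anchors.
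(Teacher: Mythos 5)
This theorem is not proved in the paper at all: it is stated with the attribution ``[\cite{yezhang15}]'' and imported wholesale as the optimality benchmark, so there is no in-paper proof to compare your proposal against. (The paper's own technical content is the achievability side via a \emph{computationally feasible} two-stage procedure, Theorems \ref{thm:upper}--\ref{thm:ini2}, not the minimax statement itself.) That said, your sketch is a faithful reconstruction of the strategy in the cited reference: the upper bound there is indeed driven by an intractable (penalized) likelihood-type estimator analyzed in two steps --- a coarse weak-consistency bound followed by a reduction to per-node large-deviation events for differences of Binomials, with Chernoff bounds producing the exponent $\frac{n_1+n_l}{2}I^*$ whose worst case over the parameter space gives $\frac{n}{2}I^*$ for $k=2$ and $\frac{n}{\beta k}I^*$ for $k\geq 3$ (two communities of size $\frac{n}{\beta k}$; for $k=2$ the two sizes sum to $n$, which is why $\beta$ drops out). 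The lower bound is likewise a reduction to local binary hypothesis tests with a Neyman--Pearson/Chernoff converse. You also correctly flag the two genuinely delicate points: quantifying that the $\delta_n n$ label discrepancies and the penalty perturb the vote counts by only $o(nI^*/k)$ in the exponent, and handling the dependence among the free nodes in the lower-bound construction. One caveat: the restriction $\beta\in[1,\sqrt{5/3})$ in the statement is a technical artifact of the argument in \cite{yezhang15} that your sketch does not surface, and your plan as written does not explain where such a constraint would arise; a complete proof would need to confront it. As a plan it is sound, but of course it stops short of the quantitative bookkeeping that makes the $1+o(1)$ constant sharp.
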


\begin{remark}
The assumption $\beta\in [1,\sqrt{5/3})$ is needed in \cite{yezhang15} for some technical reason. 
\rev{Here, the parameter $\beta$ enters the minimax rates when $k\geq 3$ since the worst case is essentially when one has two communities of size $\frac{n}{\beta k}$, while for $k=2$, the worst case is essentially two communities of size $\frac{n}{2}$.}
For all other results in this paper, we allow $\beta$ to be an arbitrary constant no less than $1$.
\end{remark}

To this end, let us show that the two-stage algorithm proposed in Section \ref{sec:method} achieves the optimal misclassification proportion.
The essence of the two-stage algorithm lies in the refinement scheme described in Algorithm \ref{algo:refine}. As long as any initialization step satisfies a certain weak consistency criterion, the refinement step directly leads to a solution with optimal misclassification proportion.
To be specific, the initialization step needs to satisfy the following condition.

\begin{condition}
\label{cond:init}
There exist constants $C_0, \delta > 0$ and a positive sequence $\gamma = \gamma_n$ such that
% and a permutation $\pi_u:[k]\to [k]$ 
% for each $u\in [n]$ 
\begin{align} 
\inf_{(B,\sigma)\in \Theta}
\min_{u\in [n]}
\Prob_{B,\sigma}\sth{\loss(\sigma,\sigma^0_u)\leq \gamma} \geq 1 - C_0 n^{-(1+\delta)},
\end{align}
for some parameter space $\Theta$.
\end{condition}
% \nb{for uniformity, shall we rewrite it as
% \begin{align*}
% \inf_{(B,\sigma)\in \Theta}
% \min_{u\in [n]} 
% 	\Prob_{B,\sigma}\sth{\loss(\sigma,\sigma^0_u)\leq \gamma_n} \geq 1 - C n^{-(1+\delta)}
% \end{align*}
% where $\Theta = \Theta(n,k,a,b,\beta;\alpha)$?
% }

Under \prettyref{cond:init}, we have the following upper bounds regarding the performance of the proposed refinement scheme.
% \nb{we need $a\asymp b$ for the theorem to hold!}
\begin{theorem}
	\label{thm:upper}
Suppose as $n\to\infty$,
$\frac{(a-b)^2}{ak\log k} \to \infty$, $a\asymp b$ and \prettyref{cond:init} is satisfied for
\begin{align}
\label{eq:gamma-cond-1}
\gamma = o\pth{\frac{1}{k\log k}}
\end{align}
and $\Theta = \Theta_0(n,k,a,b,\beta)$.
Then
there is a sequence $\eta\to 0$ such that
\begin{equation}
	\label{eq:upper-bound}
\begin{aligned}
& \sup_{(B,\sigma)\in \Theta}\Prob_{B,\sigma}\sth{
\loss(\sigma, \wh{\sigma}) \geq
\exp\pth{-(1-\eta) \frac{n I^*}{2}}
} \to 0,  & \quad \mbox{if $k = 2$},\\
& \sup_{(B,\sigma)\in \Theta}\Prob_{B,\sigma}\sth{
\loss(\sigma, \wh{\sigma}) \geq
\exp\pth{-(1-\eta) \frac{n I^*}{\beta k}}
} \to 0, & \quad \mbox{if $k \geq 3$},
\end{aligned}
\end{equation}
where $I^*$ is defined as in \eqref{eq:I}.

If in addition \prettyref{cond:init} is satisfied for $\gamma$ satisfying both \eqref{eq:gamma-cond-1} and
\begin{align}
	\label{eq:gamma-cond-2}
\gamma  = o\pth{\frac{a-b}{ak}}
\end{align}
and $\Theta = \Theta(n,k,a,b,\lambda,\beta;\alpha)$, then the conclusion in \eqref{eq:upper-bound} continues to hold for $\Theta = \Theta(n,k,a,b,\lambda,\beta;\alpha)$.
\end{theorem}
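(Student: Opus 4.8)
The plan is to analyze the penalized neighbor voting step for a single node $u$, conditionally on the initializer output $\sigma_u^0$ applied to $A_{-u}$, and then assemble the node-wise error bounds via a union bound and the consensus step. Fix $u$ and condition on $A_{-u}$; since $A_{-u}$ does not involve the edges incident to $u$, the variables $\{A_{uv}\}_{v\neq u}$ are independent $\mathrm{Bern}(B_{\sigma(u)\sigma(v)})$ and independent of $\sigma_u^0$. On the event from \prettyref{cond:init} that $\ell(\sigma,\sigma_u^0)\leq\gamma$, after choosing the optimal permutation $\pi$ the estimated communities $\wt{\mathcal C}_i^u$ agree with the true communities $\calC_i$ except on a set of size at most $\gamma n$. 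First I would show that on this good event the plug-in estimates $\wh a_u,\wh b_u$ in \eqref{eq:B-est}--\eqref{eq:ab-esti} are consistent, i.e. $\wh a_u/a\to 1$ and $\wh b_u/b\to 1$ in probability uniformly over $\Theta$; this uses that each $\wh B_{ij}^u$ is an average of $\Theta(n^2/k^2)$ nearly-i.i.d. Bernoullis plus an $O(\gamma)$ perturbation from the misclassified nodes, and $a\asymp b$ together with $\frac{(a-b)^2}{ak\log k}\to\infty$ keeps the relative error of the averages negligible. Consequently $t_u$ in \eqref{eq:t-set} and $\rho_u$ in \eqref{eq:lambda-set} converge to their population analogues $t^* = \frac12\log\frac{a(1-b/n)}{b(1-a/n)}$ and the population threshold $\rho^*$, with multiplicative error $1+o(1)$.

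Next I would estimate $\Prob\{\wh\sigma_u(u)\neq\pi(\sigma(u))\}$. Writing $l^\star=\pi(\sigma(u))$, node $u$ is misclassified only if there exists $l\neq l^\star$ with $\sum_{\sigma_u^0(v)=l}A_{uv} - \rho_u|\wt{\mathcal C}_l^u| \geq \sum_{\sigma_u^0(v)=l^\star}A_{uv} - \rho_u|\wt{\mathcal C}_{l^\star}^u|$. Replacing $\wt{\mathcal C}_i^u$ by $\calC_i$ (an $O(\gamma n)$-perturbation, absorbed into the $1-\eta$ factor via the bound $\gamma=o(1/(k\log k))$ and $\gamma=o((a-b)/(ak))$), this becomes a comparison of two sums of independent Bernoullis with different means, and a Chernoff bound at the optimal tilt $t^*$ gives, for each fixed competing label $l$,
\begin{align*}
\Prob\Bigl\{\textstyle\sum_{v\in\calC_l}A_{uv} - \rho^* n_l \geq \sum_{v\in\calC_{l^\star}}A_{uv}-\rho^* n_{l^\star}\Bigr\}
\leq \exp\bigl(-(1-\eta)(n_l\wedge n_{l^\star}) I^*\bigr).
\end{align*}
The role of the penalty $\rho^*$ (chosen exactly so that $\frac{a}{n}e^{-t^*}+1-\frac{a}{n}$ and $\frac{b}{n}e^{t^*}+1-\frac{b}{n}$ enter symmetrically) is to recenter the two tilted log-MGFs so that the exponent is governed by the Rényi-$\tfrac12$ divergence $I^*$ rather than a lopsided rate; this is the point where the argument genuinely needs the penalty and where the unequal-$B$, unequal-size parameter space $\Theta$ differs from $\Theta_0$. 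Summing over the at most $k-1$ competing labels $l$ and using $n_l\wedge n_{l^\star}\geq \frac{n}{\beta k}-1$ yields $\Prob\{\wh\sigma_u(u)\neq l^\star\}\leq k\exp(-(1-\eta)\frac{nI^*}{\beta k})$ for $k\geq 3$, and the sharper $\exp(-(1-\eta)\frac{nI^*}{2})$ for $k=2$ where both communities have size $\approx n/2$; the factor $k$ is swallowed by $\eta$ because $\frac{(a-b)^2}{ak\log k}\to\infty$ forces $nI^*/(\beta k)\gg\log k$.

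Then I would pass from the single-node bound to the loss $\ell(\sigma,\wh\sigma)$. Let $Z_u=\indc{\wh\sigma_u(u)\neq\pi(\sigma(u))}$; by Markov, $\Prob\{\frac1n\sum_u Z_u\geq \exp(-(1-\eta')\frac{nI^*}{\beta k})\} \leq \exp((1-\eta')\frac{nI^*}{\beta k})\cdot\Expect\frac1n\sum_u Z_u$, and the per-node expectation bound above makes this $\to 0$ after slightly relaxing $\eta$ to $\eta'$. The remaining issue is the consensus step: $\wh\sigma_1$ and $\wh\sigma_u$ are only defined up to separate permutations, so I would argue that on the high-probability event that \emph{both} $\sigma_1^0$ and $\sigma_u^0$ are within $\gamma$ of $\sigma$ (again from \prettyref{cond:init} and a union bound over $u$, which costs only $C_0 n^{-\delta}\to 0$), the permutation realigning $\wh\sigma_u$ to $\wh\sigma_1$ is uniquely identified by \eqref{eq:consensus} because distinct true communities have sizes $\asymp n/k$ and overlap $o(n/k)$ with wrong ones; hence $\wh\sigma(u)=\wh\sigma_u(u)$ up to a single global permutation for all $u$ simultaneously, and $\ell(\sigma,\wh\sigma)\leq\frac1n\sum_u Z_u$ with the same global $\pi$. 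I expect the main obstacle to be bookkeeping the three nested $1+o(1)$ approximations — estimation of $(\wh a_u,\wh b_u)$ and hence $(t_u,\rho_u)$, replacement of $\wt{\mathcal C}_i^u$ by $\calC_i$, and the Chernoff exponent — so that they all collapse into a single $\eta\to 0$ uniformly over the diverging-$k$ regime; in particular controlling the $\wt{\mathcal C}_i^u\leftrightarrow\calC_i$ swap inside the Chernoff bound, where a $\gamma n$-sized symmetric difference must not degrade the exponent $nI^*/(\beta k)$, is delicate and is exactly what conditions \eqref{eq:gamma-cond-1} and \eqref{eq:gamma-cond-2} are calibrated for.
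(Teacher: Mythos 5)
Your overall skeleton---leave-one-out conditioning on $A_{-u}$, a Chernoff bound at the estimated tilt $t_u$ with the penalty $\rho_u$ recentering the two tilted log-moment-generating functions, then Markov's inequality plus the consensus alignment---is exactly the route the paper takes (Lemmas \ref{lmm:ab-est}--\ref{lmm:consensus}). Two of your steps, however, have genuine gaps.

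First, your final Markov step would fail in the strong-consistency regime. The unconditional per-node bound is necessarily of the form $\Prob\{\wh\sigma_u(u)\neq\pi(\sigma(u))\}\leq k\exp(-(1-\eta)\tfrac{nI^*}{\beta k})+Cn^{-(1+\delta)}$, the additive term coming from the failure probability of the initializer in \prettyref{cond:init}; you drop this term when you write ``the per-node expectation bound above makes this $\to 0$.'' When $nI^*/(\beta k)\gg \log n$ (a regime the theorem must cover, since it is exactly where strong consistency is claimed), the target threshold $\exp(-(1-\eta')\tfrac{nI^*}{\beta k})$ is far smaller than $n^{-(1+\delta)}$, so multiplying $\Expect\tfrac1n\sum_u Z_u$ by $\exp((1-\eta')\tfrac{nI^*}{\beta k})$ blows up the $Cn^{-(1+\delta)}$ contribution and Markov gives nothing. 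The paper handles this with a case split: when the target rate drops below $n^{-(1+\delta/2)}$, the event $\{\ell(\sigma,\wh\sigma)>\text{target}\}$ coincides with $\{\ell(\sigma,\wh\sigma)>0\}$ because the loss is quantized in units of $1/n$, and the latter is controlled by the union bound $\sum_u\Prob\{\wh\sigma(u)\neq\sigma(u)\}\leq n(k-1)\exp(-(1-\eta)\tfrac{nI^*}{\beta k})+Cn^{-\delta}\to0$. You need this (or an equivalent device).

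Second, your consistency argument for $(\wh a_u,\wh b_u)$ treats $|\wt\calE_i^u|$ as an average of nearly i.i.d.\ Bernoullis, but the set $\wt\calC_i^u$ is itself a function of $A_{-u}$, so the edges being counted are not independent of the (random) set over which they are counted, and a single Bernstein bound does not apply. The paper's Lemma \ref{lmm:ab-est} resolves this by proving concentration for every \emph{fixed} candidate subset satisfying the $\gamma$-overlap constraint \eqref{eq:Ci-cond} and union-bounding over the $\exp(C\gamma n\log\gamma^{-1})$ such subsets; this is also why the single-subset bound must be pushed to probability $1-\exp(-C\gamma n\log\gamma^{-1})\,n^{-(3+\delta)}$ rather than merely $1-n^{-(1+\delta)}$. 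Without some such uniformity device, the $1-Cn^{-(1+\delta)}$ guarantee you need for $(\wh a_u,\wh b_u)$ does not follow. The rest of your outline---the role of $\rho^*$, the absorption of the $\gamma n$ symmetric differences into the exponent under \eqref{eq:gamma-cond-1} and \eqref{eq:gamma-cond-2} (with stochastic domination by $\mathrm{Bern}(\alpha a/n)$ variables for the larger space $\Theta$), and the consensus permutation---matches the paper's argument.
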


Theorem \ref{thm:upper} assumes $a\asymp b$. The case when $a\asymp b$ may not hold is considered in Section \ref{sec:discussion}.
Compared with Theorem \ref{thm:minimax}, the upper bounds (\ref{eq:upper-bound}) achieved by Algorithm \ref{algo:refine} is minimax optimal. The condition (\ref{eq:gamma-cond-1}) for the parameter space $ \Theta_0(n,k,a,b,\beta)$ is very mild. When $k=O(1)$, it reduces to $\gamma=o(1)$ and simply means that the initialization should be weakly consistent at any rate. For $k\rightarrow\infty$, it implies that the misclassification proportion within each community converges to zero. Note that if the initialization step gives  wrong labels to all nodes in one particular community, then the misclassification proportion is at least $1/k$. The condition (\ref{eq:gamma-cond-1}) rules out this situation.  For the parameter space $\Theta(n,k,a,b,\lambda,\beta;\alpha)$, an extra condition (\ref{eq:gamma-cond-2}) is required. This is because estimating the connectivity matrix $B$ in $\Theta(n,k,a,b,\lambda,\beta;\alpha)$ is harder than in $\Theta_0(n,k,a,b,\beta)$. In other words, if we do not pursue adaptive estimation, (\ref{eq:gamma-cond-2}) is not needed.

\begin{remark}
\label{rmk:weak-gamma}
Theorem \ref{thm:upper} is an adaptive result without assuming the knowledge of $a$ and $b$. When these two parameters are known, we can directly use $a$ and $b$ in (\ref{eq:t-set}) of Algorithm \ref{algo:refine}. By scrutinizing the proof of Theorem \ref{thm:upper}, the conditions (\ref{eq:gamma-cond-1}) and (\ref{eq:gamma-cond-2}) can be weakened as $\gamma=o(k^{-1})$ in this case.
\end{remark}

Given the results of Theorem \ref{thm:upper}, it remains to check the initialization step via spectral clustering satisfies Condition \ref{cond:init}. 
For matrix $P=(P_{uv})=(B_{\sigma(u)\sigma(v)})$ with $(B,\sigma)$ belonging to either $\Theta_0(n,k,a,b,\beta)$ or $\Theta(n,k,a,b,\lambda,\beta;\alpha)$, we use $\lambda_k$ to denote $\lambda_k(P)$. Define the average degree by
\begin{equation}
\bar{d}=\frac{1}{n}\sum_{u\in[n]}d_u.\label{eq:ave-deg}
\end{equation}

\begin{theorem}\label{thm:ini1}
Assume $e\leq a\leq  C_1b$ for some constant $C_1>0$ and
\begin{equation}
\frac{ka}{\lambda_k^2}\leq c,\label{eq:assume}
\end{equation}
for some sufficiently small $c\in (0,1)$. Consider USC$(\tau)$ with a sufficiently small constant $\mu>0$ in Algorithm \ref{algo:greedy} and $\tau=C_2\bar{d}$ for some sufficiently large constant $C_2>0$. For any constant $C'>0$, there exists some $C>0$ only depending on $C',C_1,C_2$ and $\mu$ such that
$$\ell(\wh{\sigma},\sigma)\leq C\frac{a}{\lambda_k^2},$$
with probability at least $1-n^{-C'}$. If $k$ is fixed, the same conclusion holds  without assuming $a\leq C_1b$.
\end{theorem}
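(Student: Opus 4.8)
The plan is to treat \prettyref{algo:greedy} applied to the leading eigenvectors $\wh U\in\reals^{n\times k}$ of $T_\tau(A)$ as a perturbation of the population problem, where the rows of the population embedding take only $k$ distinct, well-separated values. First I would control $\fnorm{\wh U-UO}$ for a suitable orthogonal rotation $O$, declare a row ``bad'' if it is far from its population centroid, bound the number of bad rows by $O(na/\lambda_k^2)$, and then check that the greedy clustering assigns the \emph{correct} (up to a global permutation) label to every good row. To start, standard regularized-concentration results for sparse random graphs \cite{chin15,le2015sparse} give, when $C_2$ is chosen large enough depending on $C'$, that $\opnorm{T_\tau(A)-P}\le C_3\sqrt{\bar d}$ with probability at least $1-n^{-C'}$; here the hypothesis $e\le a\le C_1 b$ is used only to guarantee $\bar d\asymp a\asymp b$ (so that the trimming level $\tau=C_2\bar d$ is of the right order and $\bar d=\Omega(1)$), which yields $\opnorm{T_\tau(A)-P}\le C_3\sqrt a$.

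\textbf{Spectral embedding and the bad set.} Since $P$ has rank at most $k$ with $\lambda_k(P)=\lambda_k$ and $\lambda_{k+1}(P)=0$, the Davis--Kahan $\sin\Theta$ theorem produces an orthogonal $k\times k$ matrix $O$ with $\fnorm{\wh U-UO}^2\le C_4 k\,\opnorm{T_\tau(A)-P}^2/\lambda_k^2\le C_4C_3^2\,ka/\lambda_k^2$. By the block structure $P=ZBZ^{T}$ with $Z$ the membership matrix and $B$ of full rank, $U=ZX$ for some $X$, so the rows of $UO$ take exactly $k$ values $\nu_1,\dots,\nu_k$ (one per community) with $\norm{\nu_i-\nu_j}=\sqrt{n_i^{-1}+n_j^{-1}}\gtrsim\sqrt{k/n}$ for $i\neq j$ \cite{lei14}. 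Calling $u$ \emph{good} if $\norm{\wh U_{u*}-\nu_{\sigma(u)}}<r/2$ with $r=\mu\sqrt{k/n}$ and \emph{bad} otherwise, Markov's inequality gives, for the bad set $\calB$,
\[
\frac{|\calB|}{n}\;\le\;\frac{4\fnorm{\wh U-UO}^2}{n r^2}\;\le\;\frac{4C_4C_3^2\,a}{\mu^2\lambda_k^2}\;=:\;C\,\frac{a}{\lambda_k^2},
\]
and by \eqref{eq:assume} this is at most $4C_4C_3^2 c/(\mu^2 k)$, which we force below $(100\beta^2 k)^{-1}$ by taking $c$ small (depending on $\mu,\beta,C_1,C_2,C'$); in particular $|\calB|<m:=\tfrac{n}{\beta k}-1-|\calB|$.

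\textbf{Correctness of the greedy step.} Taking $\mu$ small enough (so that $3r$ is below the between-centroid separation), I would prove by induction on the iteration index that iteration $i$ selects a center $t_i$ within $3r/2$ of the centroid $\nu_{\pi(i)}$ of some not-yet-used community, hence $\wh{\mathcal C}_i\subseteq\{\,x:\norm{x-\nu_{\pi(i)}}<5r/2\,\}$ and $\wh{\mathcal C}_i$ captures all but at most $2|\calB|$ good nodes of $C_{\pi(i)}$, so that $i\mapsto\pi(i)$ is a bijection of $[k]$. The two facts that make this work are that a radius-$r$ ball contains good nodes of at most one community, and that good nodes of a not-yet-used community are never removed in earlier iterations --- both consequences of $\norm{\nu_i-\nu_j}>3r$ together with $|\calB|<m$: any good node of an unused community has a radius-$r$ ball of size $\ge m$, so the $\arg\max$ $t_i$ has $\ge m-|\calB|>2|\calB|$ good neighbors, all lying in a single, necessarily unused, community. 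A good node $u$ then either lands in $\wh{\mathcal C}_{\pi^{-1}(\sigma(u))}$ (correct label) or is orphaned; an orphaned good node of $C_\ell$ survives to Step~7, where, since each $\wh{\mathcal C}_j$ is contained in a $5r/2$-ball about $\nu_{\pi(j)}$ and the $\nu$'s are $\gtrsim\sqrt{k/n}$-separated, its average distance to $\wh{\mathcal C}_{\pi^{-1}(\ell)}$ is $\le 3r$ while to every other $\wh{\mathcal C}_j$ it exceeds $3r$, so it is again labeled correctly. Thus $\{u:\wh\sigma(u)\neq\pi^{-1}(\sigma(u))\}\subseteq\calB$ and $\ell(\wh\sigma,\sigma)\le|\calB|/n\le Ca/\lambda_k^2$. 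For fixed $k$, $\bar d\asymp a+b\asymp a$ automatically because $b<a$, so the concentration step needs no relation between $a$ and $b$, and all constants may depend on $k$.

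\textbf{Main obstacle.} The technical heart is the greedy-clustering analysis: one must rule out that the $\arg\max$ center $t_i$ gets ``stuck'' between two population centroids or is pulled away by bad rows, and one must verify that the Step~7 clean-up \emph{repairs} rather than mislabels the orphaned good rows. This is precisely where the smallness of $\mu$ (relative to the imbalance constant $\beta$) and of $c$ enter, and where it matters that each empirical cluster $\wh{\mathcal C}_j$ is tightly localized around a \emph{single} population centroid --- a property that has to be extracted from the $3r/2$-closeness of $t_j$ to $\nu_{\pi(j)}$, since the a priori bound $\norm{\wh U_{u*}}\le\sqrt k$ on individual rows is far too weak.
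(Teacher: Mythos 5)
Your proposal follows essentially the same route as the paper's proof: operator-norm concentration of $T_\tau(A)$ around $P$ (with the $a\le C_1 b$ hypothesis used exactly to make $\tau=C_2\bar d$ the right order $\asymp a$), Davis--Kahan plus the explicit form of the population eigenvectors, a Markov-type bound showing only $O(na/\lambda_k^2)$ rows lie outside radius $r/2$ of their centroid, and an induction over the greedy iterations showing each $\wh{\mathcal C}_i$ meets exactly one population cluster and has at least its cardinality. The only cosmetic difference is that you additionally argue that Step 7 correctly repairs the orphaned good rows, whereas the paper simply counts all nodes outside $\cup_i(\wh{\mathcal C}_i\cap T_{\pi(i)})$ as errors, which already yields the same $Ca/\lambda_k^2$ bound.
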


\begin{remark}
Theorem \ref{thm:ini1} improves the error bound for spectral clustering in \cite{lei14}. While \cite{lei14} requires the assumption $a>C\log n$, our result also holds for $a=o(\log n)$. A result close to ours is that by \cite{chin15}, but their clustering step is different from Algorithm \ref{algo:greedy}. Moreover, the conclusion of Theorem \ref{thm:ini1} holds with probability $1-n^{-C'}$ for an arbitrary large $C'$, which is critical because the initialization step needs to satisfy Condition \ref{cond:init} for the subsequent refinement step to work. On the other hand, the bound in \cite{chin15} is stated with probability $1-o(1)$.
\end{remark}

When $k=O(1)$, Theorem \ref{thm:upper} and Theorem \ref{thm:ini1} jointly imply the following result.
\begin{corollary}
Consider Algorithm \ref{algo:refine} initialized by $\sigma^0$ with USC$(\tau)$ for $\tau=C\bar{d}$, where $C$ is a sufficiently large constant. Suppose as $n\rightarrow\infty$, $k=O(1)$, $\frac{(a-b)^2}{a}\rightarrow\infty$ and $a\asymp b$. Then, there exists a sequence $\eta\rightarrow 0$ such that
\begin{equation*}
\begin{aligned}
& \sup_{(B,\sigma)\in \Theta}\Prob_{B,\sigma}\sth{
\loss(\sigma, \wh{\sigma}) \geq
\exp\pth{-(1-\eta) \frac{n I^*}{2}}
} \to 0,  & \quad \mbox{if $k = 2$},\\
& \sup_{(B,\sigma)\in \Theta}\Prob_{B,\sigma}\sth{
\loss(\sigma, \wh{\sigma}) \geq
\exp\pth{-(1-\eta) \frac{n I^*}{\beta k}}
} \to 0, & \quad \mbox{if $k \geq 3$},
\end{aligned}
\end{equation*}
where the parameter space is $\Theta = \Theta_0(n,k,a,b,\beta)$.
\end{corollary}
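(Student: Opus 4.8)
The plan is to obtain the corollary as an immediate consequence of Theorem~\ref{thm:upper}: it suffices to verify that the initializer $\sigma^0$ --- USC$(\tau)$ with $\tau=C\bar{d}$ followed by the greedy clustering of Algorithm~\ref{algo:greedy} with a sufficiently small radius constant $\mu$ --- satisfies Condition~\ref{cond:init} over $\Theta=\Theta_0(n,k,a,b,\beta)$ for some $\gamma$ obeying \eqref{eq:gamma-cond-1}. Granting this, the two displayed bounds are verbatim the conclusion \eqref{eq:upper-bound} of Theorem~\ref{thm:upper}, because for $k=O(1)$ the hypothesis $\frac{(a-b)^2}{a}\to\infty$ is the same as $\frac{(a-b)^2}{ak\log k}\to\infty$, and $a\asymp b$ is assumed.

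First I would extract what the hypotheses give. Since $a\asymp b$ implies $a-b=O(a)$, the requirement $\frac{(a-b)^2}{a}\to\infty$ forces $a\to\infty$, so $a\ge e$ for all large $n$ and $a\le C_1 b$ with $C_1$ the $\asymp$-constant; thus the standing conditions of Theorem~\ref{thm:ini1} are met. For $(B,\sigma)\in\Theta_0(n,k,a,b,\beta)$ the matrix $P$ is block-constant of rank at most $k$, and a direct computation --- equivalently Proposition~\ref{prop:eigen} --- shows $\lambda_k(P)\asymp\frac{a-b}{k}$ because every community has size $n_i\asymp n/k$. Hence
\begin{equation*}
\frac{ka}{\lambda_k(P)^2}\;\asymp\;\frac{k^3 a}{(a-b)^2}\;\asymp\;\frac{a}{(a-b)^2}\;\longrightarrow\;0 \qquad\text{when }k=O(1),
\end{equation*}
so \eqref{eq:assume} holds for all large $n$, and Theorem~\ref{thm:ini1} yields $\ell(\wh\sigma,\sigma)\le C\,a/\lambda_k(P)^2\asymp a/(a-b)^2$ with probability at least $1-n^{-C'}$ for an arbitrarily large prescribed $C'$.

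Next I would lift this to the leave-one-out statement required by Condition~\ref{cond:init}. For each $u$, $A_{-u}$ is the adjacency matrix of a stochastic block model on $n-1$ nodes with the same edge probabilities $a/n,b/n$ and community sizes in $\{n_i,n_i-1\}$; when $k=O(1)$ these sizes remain $\asymp(n-1)/k$, so $A_{-u}$ lies in $\Theta_0(n-1,k,a',b',\beta')$ with $a'\asymp a$, $b'\asymp b$, $\beta'$ an absolute constant, whose population matrix still has $k$-th eigenvalue of order $(a-b)/k$. Applying Theorem~\ref{thm:ini1} to this $(n-1)$-node model with $C'\ge 2$ gives, uniformly over $u\in[n]$ and over $(B,\sigma)\in\Theta_0$, that the misclassification proportion of $\sigma^0$ on $A_{-u}$ is at most $C\,a'/\lambda_k(P')^2\asymp a/(a-b)^2$ with probability at least $1-(n-1)^{-C'}\ge 1-C_0 n^{-3/2}$; restoring node $u$ with $\sigma^0_u(u)=0$ changes $\ell(\sigma,\sigma^0_u)$ by at most $1/n$. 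Therefore Condition~\ref{cond:init} holds on $\Theta_0(n,k,a,b,\beta)$ with $\delta=1/2$ and $\gamma=C''\big(a/(a-b)^2+1/n\big)$; and since $k=O(1)$ and $a/(a-b)^2 = 1/[(a-b)^2/a]\to0$, this $\gamma$ is $o(1)=o\big(1/(k\log k)\big)$, i.e.\ it satisfies \eqref{eq:gamma-cond-1}.

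Finally, Theorem~\ref{thm:upper} applied with this $\sigma^0$ and $\Theta=\Theta_0(n,k,a,b,\beta)$ produces a sequence $\eta\to0$ for which \eqref{eq:upper-bound} holds, which is exactly the corollary. The step I expect to need the most care is the leave-one-out bookkeeping: one must check that deleting a vertex perturbs neither the size-regularity parameter nor the spectral gap $\lambda_k(P)$ by more than a constant factor --- this is precisely where $k=O(1)$ enters --- and that the ``probability $1-n^{-C'}$, $C'$ arbitrary'' guarantee of Theorem~\ref{thm:ini1} is strong enough to meet the $1-C_0n^{-(1+\delta)}$ demand of Condition~\ref{cond:init}; both are routine given the results already established.
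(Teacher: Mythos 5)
Your proposal is correct and follows essentially the same route as the paper: the corollary is obtained by verifying Condition~\ref{cond:init} via Theorem~\ref{thm:ini1} together with the eigenvalue bound $\lambda_k\gtrsim (a-b)/k$ from Proposition~\ref{prop:eigen}, and then invoking Theorem~\ref{thm:upper} (this is exactly how the paper argues, both here and in the proof of Theorem~\ref{cor:upper1}). Your extra care with the leave-one-out bookkeeping and the $1-n^{-C'}$ versus $1-C_0n^{-(1+\delta)}$ matching is a faithful elaboration of details the paper leaves implicit, not a different argument.
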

Compared with Theorem \ref{thm:minimax}, the proposed procedure achieves the minimax rate under the condition $\frac{(a-b)^2}{a}\rightarrow\infty$ and $a\asymp b$. When $k=O(1)$, the condition $\frac{(a-b)^2}{a}\rightarrow\infty$ is necessary and sufficient for weak consistency in view of Theorem \ref{thm:minimax}. More general results including the case of $k\rightarrow\infty$ are stated and discussed in Section \ref{sec:discussion}.

The following theorem characterizes the misclassification rate of normalized spectral clustering. 

\begin{theorem}\label{thm:ini2}
Assume $e\leq a\leq  C_1b$ for some constant $C_1>0$ and
\begin{equation}
\frac{ka\log a}{\lambda_k^2} \leq c,\label{eq:assume2}
\end{equation}
for some sufficiently small $c\in(0,1)$. Consider NSC$(\tau)$ with a sufficiently small constant $\mu>0$ in Algorithm \ref{algo:greedy} and $\tau=C_2\bar{d}$ for some sufficiently large constant $C_2>0$. Then, for any constant $C'>0$, there exists some $C>0$ only depending on $C',C_1,C_2$ and $\mu$ such that
$$\ell(\wh{\sigma},\sigma)\leq C\frac{a\log a}{\lambda_k^2},$$
with probability at least $1-n^{-C'}$. If $k$ is fixed, the same conclusion holds  without assuming $a\leq C_1b$.
\end{theorem}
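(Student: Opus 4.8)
The plan is to run exactly the three-step pipeline behind the proof of Theorem~\ref{thm:ini1} --- population eigenstructure, high-probability operator-norm concentration, Davis--Kahan plus an analysis of the greedy clustering in Algorithm~\ref{algo:greedy} --- but with the regularized adjacency matrix $T_\tau(A)$ replaced by the regularized Laplacian $L(A_\tau)$ and the rank-$k$ matrix $P$ replaced by the population Laplacian $\mathcal L := L(\Expect A_\tau)$, whose $(u,v)$ entry is $(\bar d_u+\tau)^{-1/2}(\bar d_v+\tau)^{-1/2}(\Expect A_{uv}+\tau/n)$ with $\bar d_u=\Expect d_u$. Under the parameter spaces \eqref{eq:para-space0}--\eqref{eq:para-space} and the assumption $e\le a\le C_1b$, one has $\bar d_u\asymp\bar d\asymp a$ for every node, so $\mathcal L$ is, up to an $O(1/n)$ diagonal correction negligible against the eigengap below, a rank-$\le k$ matrix with block-constant rows; writing $U\in\reals^{n\times k}$ for its leading eigenvectors, so that $U_{u*}=\mu_{\sigma(u)}$, the centers obey $\|\mu_i-\mu_j\|^2\ge n_i^{-1}+n_j^{-1}\gtrsim k/n$. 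The two structural facts I would record, both by the kind of argument behind Proposition~\ref{prop:eigen} in normalized form, are this separation bound and the eigengap $\lambda_k(\mathcal L)\gtrsim\lambda_k(P)/\bar d\asymp\lambda_k/a$.

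The key probabilistic input is the concentration of the regularized Laplacian: with $\tau=C_2\bar d$ for $C_2$ large and any prescribed $C'>0$,
\[
\opnorm{L(A_\tau)-\mathcal L}\ \lesssim\ \sqrt{\frac{\log\bar d}{\bar d}}\ \asymp\ \sqrt{\frac{\log a}{a}}
\]
with probability at least $1-n^{-C'}$, uniformly over the parameter space. The regularization is what forces $d_u+\tau\asymp\bar d_u+\tau\asymp\bar d$ for all $n$ nodes simultaneously on this event (this is where low-degree nodes are tamed); the remaining numerator fluctuations $A_{uv}-\Expect A_{uv}$ are then controlled by a matrix-Bernstein / $\varepsilon$-net argument, and it is precisely this step that produces the $\sqrt{\log a}$ --- hence the extra $\log a$ relative to Theorem~\ref{thm:ini1}. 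Alternatively one may import existing regularized-Laplacian concentration bounds in the spirit of \cite{joseph2013impact,le2015sparse}, tracking constants to upgrade the failure probability from $o(1)$ to $n^{-C'}$.

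Combining the two ingredients via the Davis--Kahan $\sin\Theta$ theorem, the eigenvector matrix $\wh U$ fed to Algorithm~\ref{algo:greedy} satisfies, for some orthogonal $O$,
\[
\Fnorm{\wh U-UO}^2\ \lesssim\ \frac{k\,\opnorm{L(A_\tau)-\mathcal L}^2}{\lambda_k(\mathcal L)^2}\ \lesssim\ \frac{k\,(\log a)/a}{(\lambda_k/a)^2}\ =\ \frac{ka\log a}{\lambda_k^2}\ \le\ c,
\]
the last step by \eqref{eq:assume2}. Hence the number of ``bad'' rows $u$ with $\|\wh U_{u*}-(UO)_{u*}\|$ exceeding a small constant times the separation scale $\sqrt{k/n}$ is $O\!\big(\tfrac{ka\log a/\lambda_k^2}{k/n}\big)=O\!\big(\tfrac{na\log a}{\lambda_k^2}\big)\ll n/k$. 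On this good event the transformed population rows form $k$ points pairwise $\gtrsim\sqrt{k/n}$ apart, all good rows of a community lie in a ball of radius $\ll\sqrt{k/n}$ about its center, and bad rows are a negligible fraction of every community; choosing $\mu$ small enough that $r=\mu\sqrt{k/n}$ is below the separation scale, a short induction on $i=1,\dots,k$ shows the $i$-th extracted ball $\wh{\mathcal C}_i$ is centered near an as-yet-unrecovered true center and captures all good rows of the corresponding community and none of the others, with the leftover set $S$ consisting only of bad rows. Therefore $\ell(\wh\sigma,\sigma)\le n^{-1}\#\{\text{bad rows}\}\lesssim a\log a/\lambda_k^2$ with probability at least $1-n^{-C'}$. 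For fixed $k$, the assumption $a\le C_1b$ was used only to guarantee degree regularity $\bar d_u\asymp a$, which then holds automatically since $\bar d_u\asymp a/k+b\asymp a$, so it can be dropped.

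The main obstacle is the concentration step: establishing the $\sqrt{\log a/a}$ operator-norm bound for $L(A_\tau)$ with exponentially small failure probability $n^{-C'}$ (not merely $o(1)$) and uniformly over the parameter space, which demands careful bookkeeping of the degree normalization under regularization and is exactly the source of the $\log a$ gap between Theorems~\ref{thm:ini1} and~\ref{thm:ini2}. A secondary but still delicate point is pinning down the normalized eigengap $\lambda_k(\mathcal L)\gtrsim\lambda_k/\bar d$ and the center separation, since unequal community sizes and unequal within/between probabilities make the spectrum of $\mathcal L$ less transparent than that of $P$.
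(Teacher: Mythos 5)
Your proposal follows essentially the same route as the paper's proof: reduce to the population Laplacian's block structure and eigengap $\sigma_k\gtrsim\lambda_k/\tau$ (the paper's Lemma~\ref{lem:neweigen}, stated for $L(P_\tau)$ rather than your $L(\Expect A_\tau)$, a negligible diagonal difference), invoke the operator-norm concentration $\opnorm{L(A_\tau)-L(P_\tau)}\lesssim\sqrt{\log a/a}$ with failure probability $n^{-C'}$ (the paper's Lemma~\ref{lem:sp2}), and then run Davis--Kahan together with the same greedy-clustering analysis as in Theorem~\ref{thm:ini1}. The one step you flag as the main obstacle but do not prove --- the high-probability regularized-Laplacian concentration --- is exactly what the paper establishes in Lemma~\ref{lem:sp2} via a degree-truncation/core-set argument (Lemmas~\ref{lem:remain}--\ref{lem:Lcore}) refining \cite{le2015sparse}, so your outline is consistent with, and not materially different from, the paper's argument.
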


\begin{remark}
A slightly different regularization of normalized spectral clustering is studied by \cite{qin2013regularized} only for the dense regime, while Theorem \ref{thm:ini2} holds under both dense and sparse regimes. Moreover, our result also improves that of \cite{le2015sparse} due to our tighter bound on $\opnorm{L(A_{\tau})-L(P_{\tau})}$ in Lemma \ref{lem:sp2} below. We conjecture that the $\log a$ factor in both the assumption and the bound of Theorem \ref{thm:ini2} can be removed.
\end{remark}

Note that Theorem \ref{thm:ini1} and Theorem \ref{thm:ini2} are stated in terms of the quantity  $\lambda_k$. We may specialize the results into the parameter spaces defined in (\ref{eq:para-space0}) and (\ref{eq:para-space}). By Proposition \ref{prop:eigen}, $\lambda_k\geq \frac{a-b}{2\beta k}$ for $\Theta_0(n,k,a,b,\beta)$ and $\lambda_k\geq \lambda$ for $\Theta(n,k,a,b,\lambda,\beta;\alpha)$. The implications of Theorem \ref{thm:ini1} and Theorem \ref{thm:ini2} and their uses as the initialization step for Algorithm \ref{algo:refine} are discussed in full details in Section \ref{sec:discussion}.

% \input{algo}

% \input{init}

% \input{theory}

%!TEX root = SBM_algo.tex

\section{Numerical results}
\label{sec:simulation}
In this section we present the performance of the proposed algorithm on simulated datasets.
The experiments cover three different scenarios: (1) dense network with communities of equal sizes; (2) dense network with communities of unequal sizes; and (3) sparse network. 
Recall the definition of $\bar d$ in (\ref{eq:ave-deg}). 
For each setting, we report results of Algorithm \ref{algo:refine} initialized with four different approaches: USC$(\infty)$, USC$(2\bar{d})$, NSC$(0)$ and NSC$(\bar{d})$,
 % \nb{double check naming convention, introduce at \prettyref{sec:initial}}, 
the description of which can all be found in \prettyref{sec:initial}.
For all these spectral clustering methods, \prettyref{algo:greedy} was used to cluster the leading eigenvectors. 
The constant $\mu$ in the critical radius definition was set to be $0.5$ 
% \nb{Double check!!!} 
in all the results reported here.
% We compare the results with that of running USC and NSC without being refined and show the significant improvements caused by the refinement.
% For regularized USC, the regularization parameter $\tau$ was always set at $2\bar d$.
% For regularized NSC, the regularization parameter $\tau$ was always set at $\bar d$.
%  while for NSC, it's either 0 or $\bar d$, suggested by the theoretical investigations in Section \ref{sec:result}.
% Recall for USC, nodes with degree greater than $\tau$ will be trimmed given the parameter $\tau$.
% Consequently $\tau=\infty$ indicates the spectral clustering is implemented on the whole adjacency matrix with no node removed.
% Similarly for NSC, to set $\tau=0$ means no regularization on the Laplacian matrix. 
% To use Algorithm \ref{algo:greedy} as an alternative of $K$-means, we choose $\delta=0.35\sqrt{2}$ universally in all three cases and the in the real-data analysis as well. 
For each setting, the results are based on 100 independent draws from the underlying stochastic block model.

To achieve faster running time, we also ran a simplified version of Algorithm \ref{algo:refine}. 
Instead of obtaining $n$ different initializers $\{\sigma_u\}_{u\in[n]}$ to refine each node separately, the simplified algorithm refines all the nodes with a single initialization on the whole network. 
% \nb{give a precise definition of the simplified algorithm in the appendix?}
Thus, 
% the leave-one-out steps in Algorithm \ref{algo:refine} can be saved and 
the running time can be reduced roughly by a factor of $n$. 
Simulation results below suggest that the simplified version achieves similar performances to that of \prettyref{algo:refine} in all the settings we have considered.
For the precise description of the simplified algorithm, we refer readers to \prettyref{algo:simple} in the appendix.

% \subsection{Balanced case}
\paragraph{Balanced case}
% This experiment examines how Algorithm \ref{algo:refine} greatly reduces the misclassification proportion. 
In this setting, we generate networks with 2500 nodes and 10 communities, each of which consists of 250 nodes, and we set $B_{ii} = 0.48$ for all $i$ and $B_{ij} = 0.32$ for all $i\neq j$.
% We apply various algorithms for clustering, and calculate the average number of nodes mis-clustered against the truth. 
Figure \ref{fig:ba} shows the boxplots of the number of misclassified nodes.
The first four boxplots correspond to the four different spectral clustering methods, in the order of USC$(\infty)$, USC$(2\bar{d})$, NSC$(0)$ and NSC$(\bar{d})$.
The middle four correspond to the results achieved by applying the simplified refinement scheme with these four initialization methods, and the last four show the results of \prettyref{algo:refine} with these four initialization methods.
Regardless of the initialization method, \prettyref{algo:refine} or its simplified version reduces the number of misclassified nodes from around 30 to around 5.
%\nb{remove ``refinement'' in plots!}
% Direct implement of spectral clustering leads to around 30 nodes being incorrectly recovered. It's dramatically reduced to 5 nodes after refinement.

\begin{figure}[ht]
\centering
\includegraphics[trim=0mm 8mm 0mm 2mm, clip, width=\textwidth]{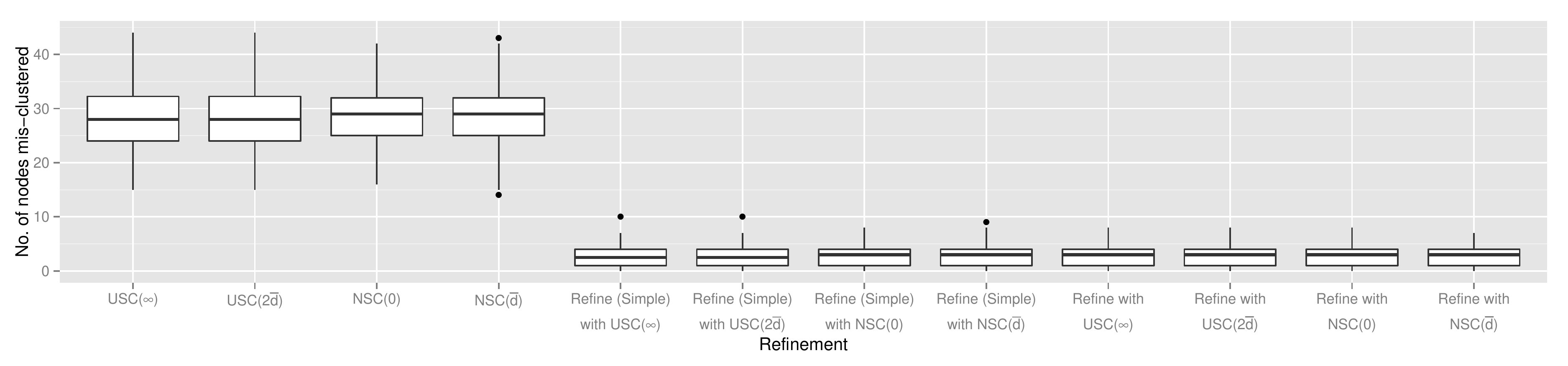}

\caption{Boxplots of number of misclassified nodes: Balanced case. 
% Note: \textit{USC} refers to USC$(\infty)$; \textit{USCR} refers to USC$(2\bar d)$; \textit{NSC} refers to NSC$(0)$; \textit{NSCR} refers to NSC$(\bar d)$. 
\textit{Simple} indicates that the simplified version of Algorithm \ref{algo:refine} is used instead.\label{fig:ba}}
\end{figure}

\paragraph{Imbalanced case}
In this setting, we generate networks with 2000 nodes and 4 communities, the sizes of which are $200,400,600$ and $800$, respectively. The connectivity matrix is
\begin{align*}
B=
\begin{pmatrix}
  0.50 & 0.29 & 0.35 & 0.25 \\
  0.29 & 0.45 & 0.25 & 0.30 \\
  0.35 & 0.25 & 0.50 & 0.35  \\
  0.25 & 0.30 & 0.35 & 0.45
 \end{pmatrix}.
\end{align*}
Hence, the within-community edge probability is no smaller than 0.45 while the between-community edge probability is no greater than 0.35, and
the underlying SBM is inhomogeneous.
Figure \ref{fig:in} shows the boxplots of the number of misclassified nodes obtained by different initialization methods and their refinements, and the boxplots are presented in the same order as those in Figure \ref{fig:ba}.
Similarly, we can see refinement significantly reduces the error. 
% Without refinement, 5 to 15 nodes fails to be correctly classified, while with refinement, almost all the nodes are correctly recovered.
\begin{figure}[ht]
\centering
\includegraphics[trim=0mm 8mm 0mm 2mm, clip, width=\textwidth]{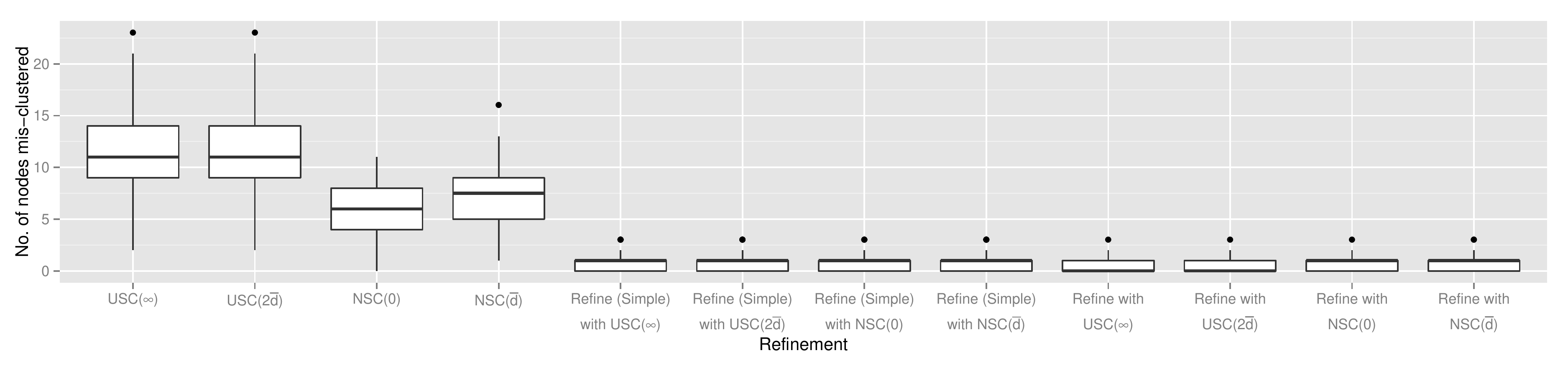}
\caption{Boxplots of number of misclassified nodes: imbalanced case. 
% NOTE: \TEXTIT{USC} REFERS TO USC$(\INFTY)$; \TEXTIT{USCR} REFERS TO USC$(2\BAR D)$; \TEXTIT{NSC} REFERS TO NSC$(0)$; \TEXTIT{NSCR} REFERS TO NSC$(\BAR D)$. 
\textit{Simple} indicates that the simplified version of Algorithm \ref{algo:refine} is used instead.\label{fig:in}}
\end{figure}

% \subsection{Sparse case}
\paragraph{Sparse case} 
In this setting we consider a much sparser stochastic block model than the previous two cases. 
In particular, each simulated network has 4000 nodes, divided into 10 communities all of size 400. We set all $B_{ii}=0.032$ and all $B_{ij} = 0.005$ when $i\neq j$. 
The average degree of each node in the network is around 30. 
Figure \ref{fig:sp} shows the boxplots of the number of misclassified nodes obtained by different initialization methods and their refinements, and the boxplots are presented in the same order as those in Figure \ref{fig:ba}.
% The comparison of performance is shown in Figure \ref{fig:sp}. 
Compared with either USC or NSC initialization, refinement reduces the number of misclassified nodes by 50\%.

\begin{figure}[ht]
\centering
\includegraphics[trim=0mm 8mm 0mm 2mm, clip, width=\textwidth]{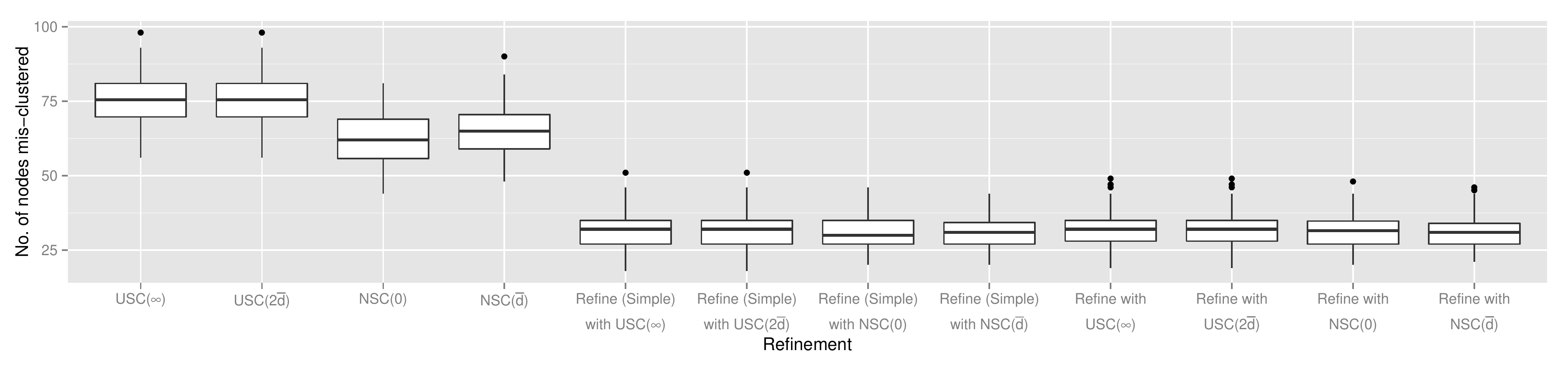}
\caption{Boxplots of number of misclassified nodes: Sparse case. 
% Note: \textit{USC} refers to USC$(\infty)$; \textit{USCR} refers to USC$(2\bar d)$; \textit{NSC} refers to NSC$(0)$; \textit{NSCR} refers to NSC$(\bar d)$. 
\textit{Simple} indicates that the simplified version of Algorithm \ref{algo:refine} is used instead.\label{fig:sp}}
\end{figure}

\paragraph{Summary} In all three simulation settings, for all four initialization approaches considered, the refinement scheme in \prettyref{algo:refine} (and its simplified version) was able to significantly reduce the number of misclassified nodes, which is in agreement with the theoretical properties presented in \prettyref{sec:result}.

%!TEX root = SBM_algo.tex

\section{Real data example}
\label{sec:realdata}
We now compare the results of our algorithm and some existing methods on a political blog dataset \citep{adamic2005political}. 
Each node in this network represents a blog about US politics and a pair of nodes is connected if one blog contains a link to the other.
% This network consists of blogs about US politics and links among them. 
There were 1490 nodes to start with, each labeled liberal or conservative. 
In what follows, we consider only the 1222 nodes located in the largest connected component of the network. 
This pre-processing step is the same as what was done in \cite{karrer11}. 
After pre-processing, 
the network has 586 liberal blogs and 636 conservative ones which naturally form two communities. 
As shown in the right panel of Figure \ref{fig:connectivity}, nodes are more likely to be connected if they have the same political ideology.

\begin{figure}
	% [bt]
\centering
\includegraphics[trim=20mm 20mm 20mm 20mm, clip, width=0.4\textwidth]{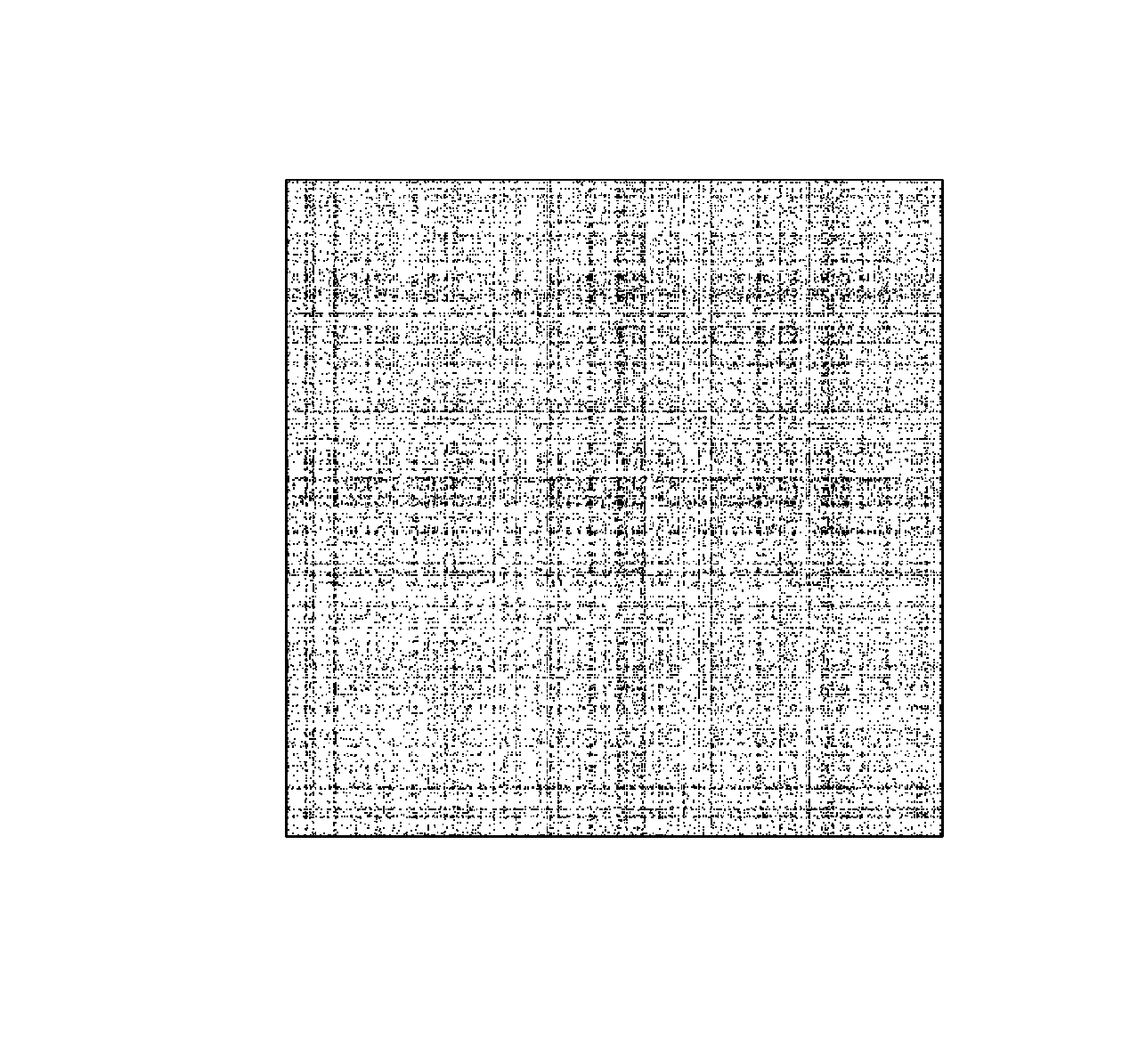}
\includegraphics[trim=20mm 20mm 20mm 20mm, clip, width=0.4\textwidth]{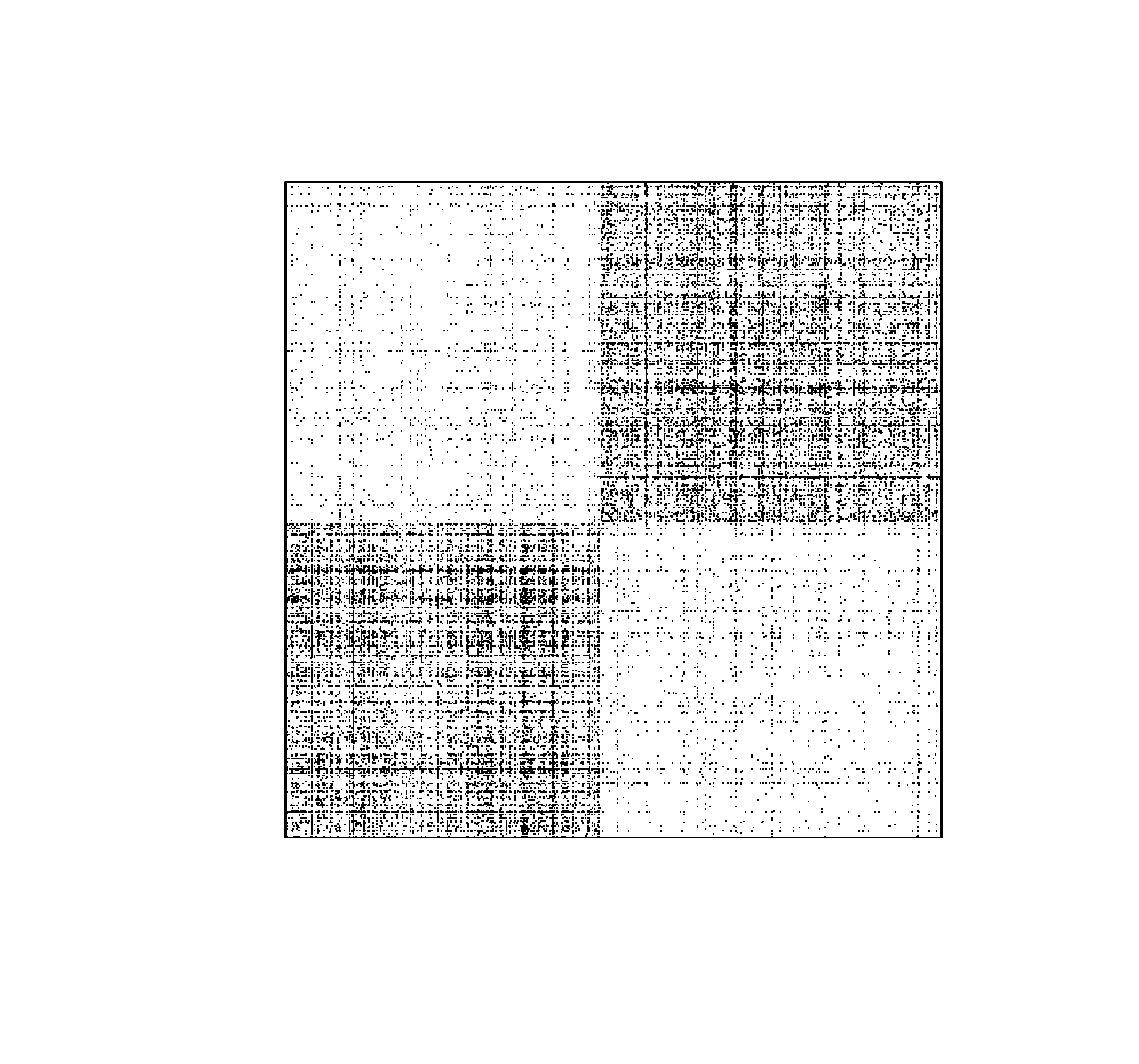}
\caption{Connectivity of political blogs\label{fig:connectivity}. Left panel: plot of the adjacency matrix when the nodes are not grouped. Right panel: plot of the adjacency matrix when the nodes are grouped according to political ideology.}
\end{figure}

Table \ref{tab:tab1} summarizes the results of Algorithm \ref{algo:refine} and its simplified version on this network with four different initialization methods, as well as the performances of directly applying the four methods on the dataset.
The average degree of the network $\bar d$ is 27, which is used as the tuning parameter for regularized NSC.
For regularized USC, we set $\tau$ equals to twice the average degree, leading to the removal of 196 most connected nodes.
The result of directly applying any of the four spectral clustering based initializations was unsatisfactory with at least 30\% nodes misclassified.
Despite the unsatisfactory performance of the initializers, \prettyref{algo:refine} and its simplified version are able to significantly reduce 
the number of misclassified nodes except for the case of NSC$(0)$, and the performance of the two
% The results of Algorithm \ref{algo:refine} and its simplified version 
are close to each other regardless of the initialization method.

\begin{table}
\begin{small}
	\begin{tabular}{c|c|c|c|c|c|c|c|c|c|c|c|c}
	\hline \noalign{\smallskip}
	Initialization                & \multicolumn{3}{c|}{USC$(\infty)$}  & \multicolumn{3}{c|}{USC$(2\bar d)$}    & \multicolumn{3}{c|}{NSC$(0)$}     & \multicolumn{3}{c}{NSC$(\bar d)$}     \\ \hline
	Refinement                  & NA & Algo\ref{algo:refine} & Simple & NA & Algo\ref{algo:refine} & Simple & NA & Algo\ref{algo:refine} & Simple & NA & Algo\ref{algo:refine} & Simple \\ \hline
	\begin{tabular}[c]{@{}l@{}}No. of nodes\\ misclassified\end{tabular} & 383         &  116  & 115 & 583         & 307   & 294   & 579         &  585  & 581    & 308         &   86  & 87     \\ \hline
	\end{tabular}
	%}
	% \\
% Note: The sign \textit{na} indicates that no refinement is implemented.	
\end{small}
\caption{Performance on the political blog dataset. 
``NA'' stands for direct application of the initialization method on the whole dataset; 
``Algo \ref{algo:refine}'' stands for the application of  \prettyref{algo:refine} with $\sigma^0$ being the labeled initialization method; 
``Simple'' stands for the application of the simplified version of \prettyref{algo:refine} with $\sigma^0$ being the labeled initialization method.
\label{tab:tab1}}
\end{table}

An interesting observation is that if we apply the refinement scheme multiple times,
% of Algorithm \ref{algo:refine} more than once, 
the number of misclassified nodes keeps decreasing until convergence and the further reduction of misclassification proportion compared to a single refinement can be sizable. 
Figure \ref{fig:performance} plots the numbers of misclassified nodes for multiple iterations of refinement via the simplified version of \prettyref{algo:refine}. 
We are able to achieve 61, 58 or 63 misclassified nodes out of 1222 depending on which initialization method is used. 
For the three initialization methods included in the figure, the number of misclassified nodes converges within several iterations. 
NSC with $\tau=0$ is not included in Figure \ref{fig:performance} due to the relatively inferior initialization, but its error also converges to around 60/1222 after 20 iterations.
For comparison, state-of-the-art method such as SCORE \cite{jin2015fast} was reported to achieve a comparable error of 58/1222. 
It is worth noting that SCORE was designed under the setting of  degree-corrected stochastic block model, which fits the current dataset better than SBM due to the presence of hubs and low-degree nodes. 
The regularized spectral clustering implemented by \cite{qin2013regularized}, which was also designed under the degree-corrected stochastic block model, was reported to have an error of $(80\pm 2)/1222$. The semi-definite programming method by \cite{cai14} achieved 63/1222.

\begin{figure}
\centering
\includegraphics[trim=0mm 8mm 0mm 2mm, clip, width=\textwidth]{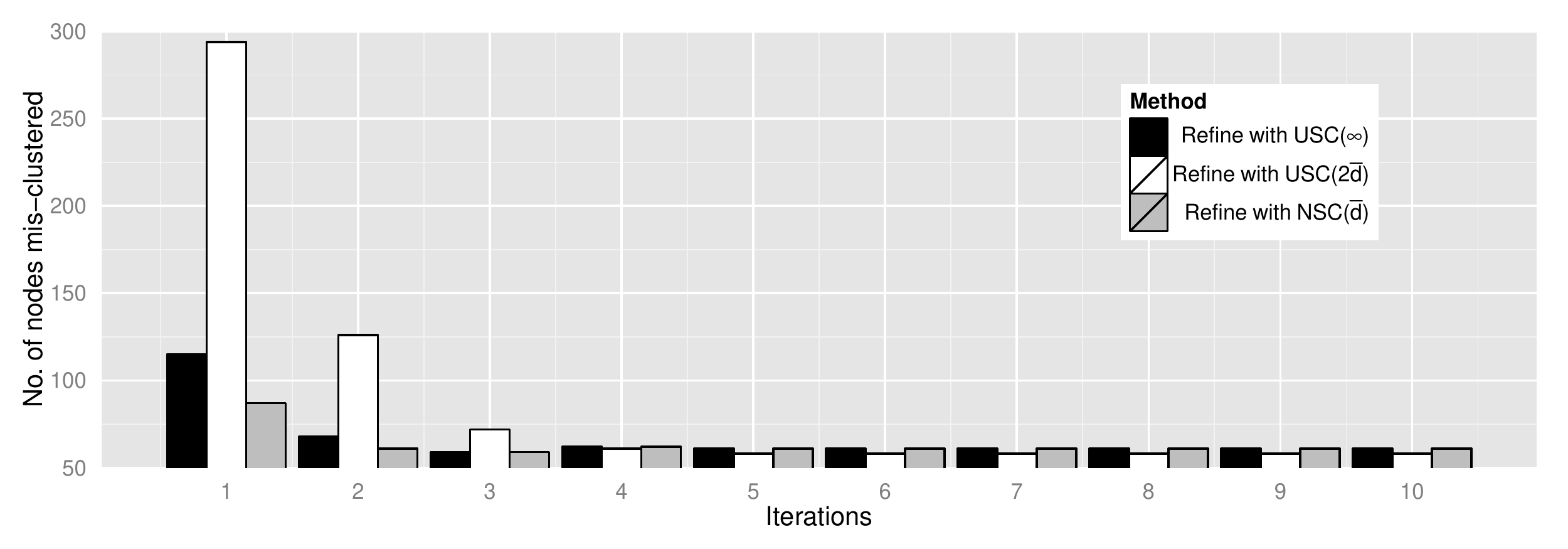}
\caption{Number of misclassified nodes vs.~number of refinement scheme applied\label{fig:performance}}
\end{figure}

To summarize, our algorithm leads to significant performance improvement over several popular spectral clustering based methods on the political blog dataset. With repeated refinements, it demonstrates competitive performance even when compared with methods designed for models that better fit the current dataset.
% This suggests that the proposed algorithm could still be practically useful even when data is not best described by the stochastic block model.

%!TEX root = SBM_algo.tex

\section{Discussion}
\label{sec:discussion}
In this section, we discuss a few important issues related to the methodology and theory we have presented in the previous sections.

\subsection{Error bounds when $a\asymp b$ may not hold}
In \prettyref{sec:result}, we established upper bounds on misclassification proportion under the assumption of $a\asymp b$. 
The following theorem shows that slightly weaker upper bounds can be obtained even when $a\asymp b$ does not hold.
To state the result, recall that we assume throughout the paper $\frac{a}{n} \leq 1-\epsilon$ for some numeric constant $\epsilon \in (0,1)$.
\begin{theorem}
\label{thm:upper-a-ll-b}
Suppose as $n\to\infty$, 
$\frac{(a-b)^2}{ak\log k} \to \infty$ and \prettyref{cond:init} is satisfied for $\gamma$ satisfying \eqref{eq:gamma-cond-1}
% \begin{align}
% \label{eq:gamma-cond-1}
% \gamma = o\pth{\frac{1}{k\log k}}
% \end{align}
and $\Theta = \Theta_0(n,k,a,b,\beta)$.
Then for some positive constants $c_\epsilon$ and $C_\eps$ that depend only on $\epsilon$,
for any sufficiently small constant $\epsilon_0 \in (0, c_\epsilon)$, if we replace the definition of $t_u$'s in \eqref{eq:t-set} with 
\begin{align}
\label{eq:t-set-cap}
t_u = \pth{\frac{1}{2}\log\frac{\wh{a}_u (1-\wh{b}_u/n)}{\wh{b}_u (1-\wh{a}_u/n)} } \wedge \log\frac{1}{\epsilon_0/2},
\end{align}
then we have
\begin{equation}
	\label{eq:upper-bound-a-ll-b}
\begin{aligned}
& \sup_{(B,\sigma)\in \Theta}\Prob_{B,\sigma}\sth{
\loss(\sigma, \wh{\sigma}) \geq
\exp\pth{-(1-C_\eps\epsilon_0) \frac{n I^*}{2}}
} \to 0,  & \quad \mbox{if $k = 2$},\\
& \sup_{(B,\sigma)\in \Theta}\Prob_{B,\sigma}\sth{
\loss(\sigma, \wh{\sigma}) \geq
 \exp\pth{-(1-C_\eps\epsilon_0) \frac{n I^*}{\beta k}}
} \to 0, & \quad \mbox{if $k \geq 3$},
\end{aligned}
\end{equation}
where $I^*$ is defined as in \eqref{eq:I}.
In particular, we can set $C_\eps = \frac{10}{3}\frac{2-\epsilon}{\frac{\eps}{2}\log\frac{2}{\eps}}$ and $c_\eps = \min(\frac{1}{10C_\eps}, \frac{\eps}{2-\eps})$.
% Then
% for any fixed constant $\epsilon_0\in(0,1)$, if we replace the definition of $t_u$'s in \eqref{eq:t-set} with 
% \begin{align}
% \label{eq:t-set-cap}
% t_u = \pth{\frac{1}{2}\log\frac{\wh{a}_u (1-\wh{b}_u/n)}{\wh{b}_u (1-\wh{a}_u/n)} } \wedge \log\frac{1}{\epsilon_0/2},
% \end{align}
% then we have
% \begin{equation}
% 	\label{eq:upper-bound-a-ll-b}
% \begin{aligned}
% & \sup_{(B,\sigma)\in \Theta}\Prob_{B,\sigma}\sth{
% \loss(\sigma, \wh{\sigma}) \geq
% \exp\pth{-(1-\epsilon_0) \frac{n I^*}{2}}
% } \to 0,  & \quad \mbox{if $k = 2$},\\
% & \sup_{(B,\sigma)\in \Theta}\Prob_{B,\sigma}\sth{
% \loss(\sigma, \wh{\sigma}) \geq
%  \exp\pth{-(1-\epsilon_0) \frac{n I^*}{\beta k}}
% } \to 0, & \quad \mbox{if $k \geq 3$},
% \end{aligned}
% \end{equation}
% where $I^*$ is defined as in \eqref{eq:I}.

If in addition \prettyref{cond:init} is satisfied for $\gamma$ satisfying both \eqref{eq:gamma-cond-1} and \eqref{eq:gamma-cond-2} and $\Theta = \Theta(n,k,a,b,\lambda,\beta;\alpha)$, then the same conclusion holds for $\Theta = \Theta(n,k,a,b,\lambda,\beta;\alpha)$.
% 
% If in addition \prettyref{cond:init} is satisfied for $\gamma$ satisfying both \eqref{eq:gamma-cond-1} and \eqref{eq:gamma-cond-2}
% % \begin{align}
% % 	\label{eq:gamma-cond-2}
% % \gamma  = o\pth{\frac{a-b}{ak}}
% % \end{align}
% and $\Theta = \Theta(n,k,a,b,\lambda,\beta;\alpha)$, then the conclusion in \eqref{eq:upper-bound-a-ll-b} holds for $\Theta = \Theta(n,k,a,b,\lambda,\beta;\alpha)$. \nb{(16) implies (18) if $a/b\rightarrow\infty$}
\end{theorem}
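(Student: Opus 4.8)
The plan is to rerun the proof of Theorem~\ref{thm:upper} essentially verbatim, isolating the single place where the hypothesis $a\asymp b$ is used and replacing it by an estimate valid for the clipped weights $t_u$ of \eqref{eq:t-set-cap}. Recall the mechanism of Theorem~\ref{thm:upper}. One conditions on the event --- of probability $1-o(1)$ by a union bound over $u$ and the hypothesis $\gamma=o(1/(k\log k))$ --- that the initializer meets Condition~\ref{cond:init} on every leave-one-out subnetwork. On this event each $\wt{\mathcal C}_i^u$ agrees with the true $\mathcal C_i$ up to a $\gamma$-fraction, so for a node $u$ truly in community $i$ the votes $\sum_{\sigma^0_u(v)=l}A_{uv}$ are, up to a negligible perturbation, sums of $\approx n_l$ independent $\mathrm{Bern}(B_{i l})$ entries, and $\wh a_u,\wh b_u$ --- hence $t_u$ and $\rho_u$ --- concentrate around their population counterparts. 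A Chernoff bound on $\sum_{\sigma^0_u(v)=l}A_{uv}-\sum_{\sigma^0_u(v)=i}A_{uv}-\rho_u(n_l^{(u)}-n_i^{(u)})$, exponentially tilted by $\pm t_u$, then controls $\Prob\{\wh\sigma_u(u)\neq i\}$. Writing $\psi(t)=-\log\big[(\tfrac bn e^{t}+1-\tfrac bn)(\tfrac an e^{-t}+1-\tfrac an)\big]$, the penalty \eqref{eq:lambda-set} is exactly the value that symmetrizes the two one-sided large deviations, so this probability is at most $\exp\!\big(-(1-o(1))\tfrac{n_i+n_l}{2}\psi(t_u)\big)$; since $\psi$ is concave with $\psi(0)=0$ and attains its maximum $I^*=\psi(t^\star)$ at the optimal tilt $t^\star=\tfrac12\log\tfrac{(a/n)(1-b/n)}{(b/n)(1-a/n)}$, and $a\asymp b$ forces $t_u=t^\star(1+o(1))$, a union bound over $u$ and the $k-1$ wrong labels plus the unchanged consensus step deliver \eqref{eq:upper-bound}.

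First I would fix $\bar t:=t^\star\wedge\log(2/\epsilon_0)$, the population version of \eqref{eq:t-set-cap}, and argue that two facts let the argument survive the loss of $a\asymp b$. The clipped weight obeys $t_u\le\log(2/\epsilon_0)$ deterministically and the penalty $\rho_u$ built from it via \eqref{eq:lambda-set} stays bounded between quantities of order $\tfrac bn$ and $\tfrac an$, so the concentration of $t_u,\rho_u$ needs only crude one-sided control of $\wh a_u/\wh b_u$. Splitting on the size of $a/b$: if $t^\star\le\log(2/\epsilon_0)$ --- which already implies $a/b=O_{\epsilon_0}(1)$ and thus $a\asymp b$ --- the clip is inactive with probability $1-o(1)$ and the estimates in the proof of Theorem~\ref{thm:upper} apply, giving $t_u=t^\star(1+o(1))=\bar t(1+o(1))$; if $a/b$ is large (say $b/a\le\epsilon_0^2/8$, with $\gamma\le\epsilon_0^2/8$ holding eventually), then even the pessimistic bound $\wh b_u\lesssim b+\gamma a$ forces $\wh a_u/\wh b_u\ge 4/\epsilon_0^2$ and hence $t_u=\log(2/\epsilon_0)=\bar t$ exactly, with probability $1-O(n^{-(1+\delta)})$; the narrow intermediate band still has $a\asymp b$, so $t_u$ is within $o(I^*)$ of $\bar t$ by the same concentration. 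In every case $\psi(t_u)\ge(1-o(1))\psi(\bar t)$, so the per-node bound becomes $\exp\!\big(-(1-o(1))\tfrac{n_i+n_l}{2}\psi(\bar t)\big)$.

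The only genuinely new ingredient I would need is a deterministic inequality comparing the Chernoff exponent at the clipped tilt with the optimal one: for $0<\tfrac bn<\tfrac an\le1-\epsilon$ and $\epsilon_0\in(0,c_\epsilon)$,
\begin{equation}
\psi(\bar t)\ \ge\ (1-C_\epsilon\epsilon_0)\,I^*,
\label{eq:chernoff-clip}
\end{equation}
with $C_\epsilon,c_\epsilon$ as in the statement. When $t^\star\le\log(2/\epsilon_0)$ there is nothing to prove. When $t^\star>\log(2/\epsilon_0)$, writing $g(u)=e^{-\psi(-\log u)}$, which is convex and minimized at $u=e^{-t^\star}$ with value $e^{-I^*}$, I would evaluate $g$ at $u=\epsilon_0/2$; using the identity $e^{-I^*}=1-\big(\sqrt{\tfrac an(1-\tfrac bn)}-\sqrt{\tfrac bn(1-\tfrac an)}\big)^2$, the elementary bound $I^*\ge\big(\sqrt{\tfrac an(1-\tfrac bn)}-\sqrt{\tfrac bn(1-\tfrac an)}\big)^2$, and $\log(1+x)\le x$, one reduces \eqref{eq:chernoff-clip} to bounding $I^*$ from below by a constant multiple of $\tfrac an$ that is valid precisely in the clipping regime $\tfrac an>\tfrac bn/\epsilon_0^2$; this last step is where the $\log(2/\epsilon)$ and $(2-\epsilon)$ factors of $C_\epsilon$ originate. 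Granting \eqref{eq:chernoff-clip}, the per-node bound is $\exp\!\big(-(1-o(1))(1-C_\epsilon\epsilon_0)\tfrac{n_i+n_l}{2}I^*\big)$, and since $\tfrac{n_i+n_l}{2}\ge\tfrac n{\beta k}-1$ for $k\ge3$ and $=\tfrac n2-1$ for $k=2$, the union bound over nodes and wrong labels yields \eqref{eq:upper-bound-a-ll-b}. For $\Theta(n,k,a,b,\lambda,\beta;\alpha)$ the same argument works once the estimates of $\wh a_u,\wh b_u$ under \eqref{eq:gamma-cond-2} are imported from the proof of Theorem~\ref{thm:upper}, using additionally that $-\log[(B_{i l}e^{t}+1-B_{i l})(B_{ii}e^{-t}+1-B_{ii})]$ dominates $\psi(t)$ by stochastic monotonicity since $B_{ii}\ge\tfrac an\ge\tfrac bn\ge B_{i l}$.

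The main obstacle will be the deterministic inequality \eqref{eq:chernoff-clip} with its explicit constants: proving uniformly over the admissible $(a,b)$ that truncating the exponential tilt at $\log(2/\epsilon_0)$ costs at most a factor $(1-C_\epsilon\epsilon_0)$ in the Bernoulli Chernoff exponent. Everything else --- the leave-one-out coupling, the symmetrizing role of $\rho_u$, the union bounds, and the consensus step --- is essentially a rerun of the proof of Theorem~\ref{thm:upper}, with $\psi(\bar t)$ in place of $I^*$ at the single point where it matters.
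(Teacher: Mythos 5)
Your proposal is correct and follows essentially the same route as the paper: rerun the proof of Theorem~\ref{thm:upper} with the clipped tilt, reduce to the regime where the clip is active (which forces $a/b$ large, the complementary regime reverting to the original argument), and establish the deterministic comparison that the Bernoulli Chernoff exponent at $t_u=\log(2/\epsilon_0)$ is at least $(1-C_\epsilon\epsilon_0)I^*$, using that $I^*$ is comparable to $a/n$ in that regime. The paper proves this same key inequality via the elementary bound $-\log(1-x)\ge(1-C'_y c_0)(-\log(1-y))$ for $x\ge(1-c_0)y$ with $C'_y=\frac{2y}{-(1-y)\log(1-y)}$, which is an algebraically equivalent packaging of your step.
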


Compared with the conclusion \eqref{eq:upper-bound} in \prettyref{thm:upper}, the vanish sequence $\eta$ in the exponent of the upper bound is replaced by $C_\eps \epsilon_0$, which is guaranteed to be smaller than $\min(0.1, \frac{2}{\log(2/\eps)} )$ and can be driven to be arbitrarily small by decreasing $\eps_0$.
To achieve this, the $t_u$'s used in defining the penalty parameters in the penalized neighbor voting step need to be truncated at the value $\log\frac{1}{\epsilon_0/2}$.

% \nb{insert some remarks}

\subsection{Implications of the results}

We now discuss some implications of the results in Theorems \ref{thm:upper} -- \ref{thm:upper-a-ll-b}. 

When using USC as initialization for Algorithm \ref{algo:refine}, we obtain the following results by combining Theorem \ref{thm:upper}, Theorem \ref{thm:ini1} and Theorem \ref{thm:upper-a-ll-b}.
% \nb{change the following sentence!}
Recall that $\bar{d}$ is the average degree of nodes in $A$ defined in (\ref{eq:ave-deg}).

\begin{theorem}\label{cor:upper1}
Consider Algorithm \ref{algo:refine} initialized by $\sigma^0$ with USC$(\tau)$ with $\tau=C \bar{d}$ 
% \nb{to be changed!} 
for some sufficiently large constant $C > 0$. 
If as $n\rightarrow\infty$, $a\asymp b$ and
\begin{align}
	\label{eq:cor-upper1-1}
\frac{(a-b)^2}{ak^3\log k}\rightarrow\infty,
\end{align}
then there is a sequence $\eta\rightarrow 0$ such that
% \begin{equation*}
% \begin{aligned}
% & \sup_{(B,\sigma)\in \Theta}\Prob_{B,\sigma}\sth{
% \loss(\sigma, \wh{\sigma}) \geq
% \exp\pth{-(1-\eta) \frac{n I^*}{2}}
% } \to 0,  & \quad \mbox{if $k = 2$},\\
% & \sup_{(B,\sigma)\in \Theta}\Prob_{B,\sigma}\sth{
% \loss(\sigma, \wh{\sigma}) \geq
% (k-1) \exp\pth{-(1-\eta) \frac{n I^*}{\beta k}}
% } \to 0, & \quad \mbox{if $k \geq 3$},
% \end{aligned}
% \end{equation*}
\eqref{eq:upper-bound} holds
with $\Theta = \Theta_0(n,k,a,b,\beta)$. 
If as $n\rightarrow\infty$, $a\asymp b$ and
\begin{align}
	\label{eq:cor-upper1-2}
\frac{\lambda^2}{ak(\log k+a/(a-b))}\rightarrow\infty,	
\end{align}
then \eqref{eq:upper-bound} holds for $\Theta=\Theta(n,k,a,b,\lambda,\beta;\alpha)$. 
If for either parameter space, $a\asymp b$ may not hold but $k$ is fixed and  \eqref{eq:cor-upper1-1} or \eqref{eq:cor-upper1-2} holds respectively,
then \eqref{eq:upper-bound-a-ll-b} holds 
% for an arbitrarily small constant $\epsilon_0\in(0,1)$ 
as long as $t_u$ is replaced by (\ref{eq:t-set-cap}) in Algorithm \ref{algo:refine}.
\end{theorem}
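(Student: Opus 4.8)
The plan is to derive this result purely by assembling pieces already in place: \prettyref{thm:upper} (or \prettyref{thm:upper-a-ll-b} in the last clause) does all the work once \prettyref{cond:init} is available, so the only thing to prove is that running $\mathrm{USC}(\tau)$ with $\tau = C\bar d$ as the initializer $\sigma^0$ inside \prettyref{algo:refine} satisfies \prettyref{cond:init} with a $\gamma$ small enough to feed into \prettyref{thm:upper}. Concretely, I would first record that, given \prettyref{cond:init} for the stated $\gamma$, \prettyref{thm:upper} immediately yields \eqref{eq:upper-bound} on the corresponding parameter space; everything below is the verification of \prettyref{cond:init}.

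For the verification, fix $u\in[n]$. Then $A_{-u}$ is the adjacency matrix of an SBM on $n-1$ nodes with the same connectivity matrix $B$ (now read in the $(n-1)$-normalization, i.e.\ with $a'=a(n-1)/n\asymp a$, $b'\asymp b$) and label function $\sigma$ restricted to $[n]\setminus\{u\}$. Deleting one node changes each community size by at most one, so this model lies in $\Theta_0(n-1,k,a',b',\beta')$ (resp.\ $\Theta(n-1,k,a',b',\lambda',\beta';\alpha')$) with absolute constants $\beta'\geq\beta$, $\alpha'\geq\alpha$, and in particular $a\asymp b$ is inherited; writing $P_{-u}$ for the principal $(n-1)\times(n-1)$ submatrix of $P$, a short eigenvalue perturbation estimate based on the factorization $P=ZBZ^{T}$ (removing row/column $u$ only perturbs $D^{1/2}BD^{1/2}$, $D=\diag(n_1,\dots,n_k)$, by $O(\|P\|/n_{\min}) = O(ak/n)$ in operator norm) shows $\lambda_k(P_{-u})\geq(1-o(1))\lambda_k$ uniformly in $u$, using $\lambda_k\gg\sqrt{ak\log k}$ under \eqref{eq:cor-upper1-1}--\eqref{eq:cor-upper1-2}. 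I would then apply \prettyref{thm:ini1} to this $(n-1)$-node model with $C'=2$: its hypotheses $e\le a'\le C_1 b'$ and $ka'/\lambda_k(P_{-u})^2\le c$ hold because $ka/\lambda_k^2\lesssim ak^3/(a-b)^2\to0$ on $\Theta_0$ and $ka/\lambda^2\to0$ on $\Theta$, and its conclusion gives, on an event of probability at least $1-(n-1)^{-2}\geq 1-C_0 n^{-(1+\delta)}$, the bound $\ell(\sigma,\sigma^0_u)\leq C a'/\lambda_k(P_{-u})^2+1/n = O(a/\lambda_k^2)$. Since this is uniform over the (restricted) parameter space and over $u$, \prettyref{cond:init} holds with $\gamma\asymp a/\lambda_k^2$, i.e.\ $\gamma\asymp ak^2/(a-b)^2$ on $\Theta_0$ and $\gamma\asymp a/\lambda^2$ on $\Theta$.

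It remains to match this $\gamma$ to the requirements of \prettyref{thm:upper}. On $\Theta_0(n,k,a,b,\beta)$ only \eqref{eq:gamma-cond-1} is needed, and $\gamma\asymp ak^2/(a-b)^2 = o(1/(k\log k))$ is exactly \eqref{eq:cor-upper1-1}. On $\Theta(n,k,a,b,\lambda,\beta;\alpha)$ both are needed: $\gamma\asymp a/\lambda^2 = o(1/(k\log k))$ is equivalent to $\lambda^2/(ak\log k)\to\infty$, and $\gamma\asymp a/\lambda^2 = o((a-b)/(ak))$ is equivalent to $\lambda^2(a-b)/(a^2k)\to\infty$, i.e.\ $\lambda^2/(ak\cdot a/(a-b))\to\infty$; since $\log k+a/(a-b)\asymp\max\{\log k,\,a/(a-b)\}$, the conjunction is precisely \eqref{eq:cor-upper1-2}. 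This gives the first two assertions. For the final clause, where $a\asymp b$ may fail but $k$ is fixed, I would invoke instead the ``$k$ fixed'' part of \prettyref{thm:ini1} (which drops $a\le C_1 b$) and \prettyref{thm:upper-a-ll-b}: with $k$ fixed, \eqref{eq:gamma-cond-1} reduces to $\gamma=o(1)$, which follows from \eqref{eq:cor-upper1-1} (resp.\ \eqref{eq:cor-upper1-2}) with $k=O(1)$, and the replacement of $t_u$ by \eqref{eq:t-set-cap} is exactly the modification \prettyref{thm:upper-a-ll-b} demands, yielding \eqref{eq:upper-bound-a-ll-b}.

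The main obstacle is the leave-one-out reduction, and within it the claim $\lambda_k(P_{-u})\asymp\lambda_k$ uniformly in $u$: eigenvalue interlacing is useless here because $P$ has rank at most $k$ (so $\sigma_{k+1}(P)=\sigma_{k+2}(P)=0$), and one genuinely has to exploit the block structure $P=ZBZ^{T}$ to control the effect of deleting a row and a column. Everything else — checking that the three quantities $ak^2/(a-b)^2$, $a/\lambda^2$, and the smallness conditions $ak^3/(a-b)^2\to0$, $ka/\lambda^2\to0$ line up with \eqref{eq:cor-upper1-1}--\eqref{eq:cor-upper1-2}, and that the probability $1-(n-1)^{-C'}$ fits the form $1-C_0n^{-(1+\delta)}$ — is routine bookkeeping.
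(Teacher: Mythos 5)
Your proposal is correct and follows essentially the same route as the paper's own (very terse) proof: verify \prettyref{cond:init} for USC via \prettyref{thm:ini1} together with \prettyref{prop:eigen} (giving $\gamma\asymp k^2a/(a-b)^2$ on $\Theta_0$ and $\gamma\asymp a/\lambda^2$ on $\Theta$), match these to \eqref{eq:gamma-cond-1}--\eqref{eq:gamma-cond-2}, and then invoke \prettyref{thm:upper} and \prettyref{thm:upper-a-ll-b}. The only difference is that you make explicit the leave-one-out reduction (that $A_{-u}$ is an $(n-1)$-node SBM with comparable parameters and $\lambda_k(P_{-u})=(1-o(1))\lambda_k$), a detail the paper silently elides; your perturbation argument via the $k\times k$ matrix $D^{1/2}BD^{1/2}$ is a valid way to fill that in.
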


Compared with Theorem \ref{thm:minimax}, the minimax optimal performance is achieved under mild conditions.
Take $\Theta = \Theta_0(n,k,a,b,\beta)$ for example.
For any fixed $k$,
the minimax optimal misclassification proportion is achieved with high probability only under the additional condition of $a\asymp b$.
In addition, weak consistency is achieved for fixed $k$ as long as $\frac{(a-b)^2}{a}\to \infty$, regardless of the behavior of $\frac{a}{b}$.
This condition is indeed necessary and sufficient for weak consistency. 
See, for instance, \cite{mossel2012stochastic, mossel2013proof,yun2014community,yezhang15}.
To achieve strong consistency for fixed $k$, it suffices to ensure $\ell(\sigma,\wh\sigma) < \frac{1}{n}$ and Theorem \ref{cor:upper1} implies that it is sufficient to have
\begin{align}
	\label{eq:strong-suff}
\liminf_{n\to\infty} \frac{nI^*}{2 \log n} > 1, ~~\mbox{when $k=2$;}\quad
\liminf_{n\to\infty} \frac{nI^*}{\beta k \log n } > 1,
~~\mbox{when $k\geq 3$,}
\end{align}
regardless of the behavior of $\frac{a}{b}$.
On the other hand, \prettyref{thm:minimax} shows that it is impossible to achieve strong consistency if
\begin{align}
	\label{eq:strong-nece}
\limsup_{n\to\infty} \frac{nI^*}{2 \log n} < 1, ~~\mbox{when $k=2$;}\quad
\limsup_{n\to\infty} \frac{nI^*}{\beta k \log n } < 1,
~~\mbox{when $k\geq 3$.}
\end{align}
\rev{When $\frac{a}{n} = o(1)$, $nI^* = (1+o(1))(\sqrt{a}-\sqrt{b})^2$ and so one can replace ${nI^*}$ in \eqref{eq:strong-suff} -- \eqref{eq:strong-nece} with ${(\sqrt{a}-\sqrt{b})^2}$.}
% which makes the comparison to the results in the literature more transparent.}
In the literature, \citet{abbe2014exact}, \citet{mossel2014consistency} and \citet{hajek2014achieving} obtained comparable strong consistency results via efficient algorithms for the special case of two communities of equal sizes, \ie, $k=2$ and $\beta = 1$. 
\rev{Under the additional assumption of $a\asymp b\asymp {\log n}$,
\citet{hajek2015achieving} later achieved the result via efficient algorithm for the case of fixed $k$ and $\beta = 1$, and \citet{abbe2015community} investigated the case of fixed $k$ and $\beta \geq 1$.
In comparison, our result holds for any fixed $k$ and any $\beta \geq 1$ without assuming $a\asymp b \asymp \log n$.}
\rev{In the weak consistency regime, in terms of misclassification proportion, for the special case of $k=2$ and $\beta = 1$, \citet{yun2014accurate} achieved the optimal rate for $\Theta_0(n,2,a,b,1)$ when $a \asymp b \asymp a-b$, while the error bounds in other papers are typically off by a constant multiplier on the exponent.
In comparison, Theorem \ref{cor:upper1} provides optimal results \eqref{eq:upper-bound} and near optimal results \eqref{eq:upper-bound-a-ll-b} for a much broader class of models under much weaker conditions.}
Last but not least, our algorithm can provably achieve strong consistency and minimax optimal performance even for growing $k$, which to our limited knowledge, is the first in the literature.

% \nb{this paragraph needs to be rewritten!!!}
%  Moreover, various theoretical results in literature can be derived as special cases of Theorem \ref{cor:upper1}.
% When $k$ is a fixed constant, Theorems \ref{thm:minimax} and \ref{cor:upper1} imply that we achieve weak consistency for the parameter space $\Theta_0(n,k,a,b,\beta)$ if and only if $\frac{(a-b)^2}{a}\rightarrow\infty$, which recovers the necessary and sufficient condition for weak consistency in 
% \cite{mossel2012stochastic, mossel2013proof,yezhang15}.
% % ,  this is the necessary and sufficient condition for weak consistency. 
% To achieve strong consistency, our bound requires
% $$\exp\pth{-(1+o(1)) \frac{n I^*}{2}}<\frac{1}{n},$$
% for $k=2$ and
% $$(k-1) \exp\pth{-(1+o(1)) \frac{n I^*}{\beta k}}<\frac{1}{n},$$
% for $k\geq 3$. These two inequalities are equivalent to
% $$\frac{nI^*}{2}>(1+o(1))\log n,$$
% for $k=2$ and
% $$\frac{nI^*}{\beta k}>(1+o(1))\left(\log n+\log(k-1)\right),$$
% for $k\geq 3$. 
% Special cases of these conditions were shown to be necessary and sufficient for strong consistency by \cite{abbe2014exact,mossel2014consistency,hajek2014achieving,hajek2015achieving} in the special cases of $\Theta_0(n,k,a,b,1)$ where all community sizes are equal and $k$ is not too large.

The performance of Algorithm \ref{algo:refine} initialized by NSC can be summarized as the following theorem by combining Theorem \ref{thm:upper}, Theorem \ref{thm:ini2} and Theorem \ref{thm:upper-a-ll-b}.
In this case, the sufficient condition for achieving minimax optimal performance is slightly stronger than when USC is used for initialization.
\begin{theorem}\label{cor:upper2}
Consider Algorithm \ref{algo:refine} initialized by $\sigma^0$ with NSC$(\tau)$ with $\tau=C \bar{d}$ 
% \nb{to be replaced!} 
for some sufficiently large constant $C >0$. 
If as $n\rightarrow\infty$, $a\asymp b$ and
\begin{align}
	\label{eq:cor-upper2-1}
\frac{(a-b)^2}{ak^3\log k\log a}\rightarrow\infty,
\end{align}
then there is a sequence $\eta\rightarrow 0$ such that
% \begin{equation*}
% \begin{aligned}
% & \sup_{(B,\sigma)\in \Theta}\Prob_{B,\sigma}\sth{
% \loss(\sigma, \wh{\sigma}) \geq
% \exp\pth{-(1-\eta) \frac{n I^*}{2}}
% } \to 0,  & \quad \mbox{if $k = 2$},\\
% & \sup_{(B,\sigma)\in \Theta}\Prob_{B,\sigma}\sth{
% \loss(\sigma, \wh{\sigma}) \geq
% (k-1) \exp\pth{-(1-\eta) \frac{n I^*}{\beta k}}
% } \to 0, & \quad \mbox{if $k \geq 3$},
% \end{aligned}
% \end{equation*}
\eqref{eq:upper-bound} holds
with $\Theta = \Theta_0(n,k,a,b,\beta)$. 
If as $n\rightarrow\infty$, $a\asymp b$ and
\begin{align}
	\label{eq:cor-upper2-2}
\frac{\lambda^2}{ak\log a(\log k+a/(a-b))}\rightarrow\infty, 	
\end{align}
then \eqref{eq:upper-bound} holds for $\Theta=\Theta(n,k,a,b,\lambda,\beta;\alpha)$. 
% If $a/b \to \infty$ and $k$ is fixed, 
% then \eqref{eq:upper-bound-a-ll-b} holds for arbitrarily small constant $\epsilon_0 \in (0,1)$ as long as $t_u$ is replaced by \eqref{eq:t-set-cap} in Algorithm \ref{algo:refine}.
% If for either parameter space, $a/b \to \infty$ and $k$ is fixed, 
% then \eqref{eq:upper-bound-a-ll-b} holds for an arbitrarily small constant $\epsilon_0\in(0,1)$ as long as $t_u$ is replaced by (\ref{eq:t-set-cap}) in Algorithm \ref{algo:refine} when \eqref{eq:cor-upper2-1} or \eqref{eq:cor-upper2-2} holds respectively.
If for either parameter space, $a\asymp b$ may not hold but $k$ is fixed and  \eqref{eq:cor-upper2-1} or \eqref{eq:cor-upper2-2} holds respectively,
then \eqref{eq:upper-bound-a-ll-b} holds 
% for an arbitrarily small constant $\epsilon_0\in(0,1)$ 
as long as $t_u$ is replaced by (\ref{eq:t-set-cap}) in Algorithm \ref{algo:refine}.
\end{theorem}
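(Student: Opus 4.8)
The plan is to obtain Theorem~\ref{cor:upper2} as a direct corollary of Theorem~\ref{thm:upper} (and Theorem~\ref{thm:upper-a-ll-b} for the case where $a\asymp b$ fails), by verifying that NSC$(\tau)$ with $\tau = C\bar d$ satisfies \prettyref{cond:init} with the required rate $\gamma$. First I would recall that \prettyref{algo:refine} applies the initialization method $\sigma^0$ to each leave-one-out submatrix $A_{-u}$; since $A_{-u}$ is itself the adjacency matrix of an SBM on $n-1$ nodes with parameters $(k, a', b', \lambda', \beta', \alpha)$ that differ from $(k,a,b,\lambda,\beta,\alpha)$ only by lower-order corrections (one node removed changes community sizes by at most one, and $\lambda_k(P_{-u})$ differs from $\lambda_k(P)$ by a bounded amount via eigenvalue interlacing), Theorem~\ref{thm:ini2} applies uniformly over $u\in[n]$ to each $A_{-u}$. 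Thus for any constant $C'>0$ there is a constant $C$ such that $\min_{u\in[n]}\Prob\{\ell(\sigma, \sigma^0_u) \le C\, a\log a / \lambda_k^2\} \ge 1 - n^{-C'}$, which is exactly \prettyref{cond:init} with $\gamma \asymp a\log a/\lambda_k^2$ and $\delta = C'-1$ (choosing $C'>1$).

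Next I would translate the hypotheses \eqref{eq:cor-upper2-1}, \eqref{eq:cor-upper2-2} into the two conditions \eqref{eq:gamma-cond-1} and \eqref{eq:gamma-cond-2} of Theorem~\ref{thm:upper}, specializing $\lambda_k$ to each parameter space via \prettyref{prop:eigen}. For $\Theta_0(n,k,a,b,\beta)$ we have $\lambda_k \ge \frac{a-b}{2\beta k}$, so $\gamma \asymp a\log a/\lambda_k^2 \lesssim \frac{a k^2 \log a}{(a-b)^2}$; the requirement $\gamma = o(1/(k\log k))$ becomes $\frac{(a-b)^2}{a k^3 \log k \log a}\to\infty$, which is precisely \eqref{eq:cor-upper2-1}. (Note \eqref{eq:gamma-cond-2} is not needed for $\Theta_0$.) For $\Theta(n,k,a,b,\lambda,\beta;\alpha)$ we have $\lambda_k \ge \lambda$, so $\gamma \asymp a\log a/\lambda^2$; the two conditions $\gamma = o(1/(k\log k))$ and $\gamma = o((a-b)/(ak))$ combine into $\frac{\lambda^2}{a k \log a\,(\log k + a/(a-b))}\to\infty$, which is \eqref{eq:cor-upper2-2}. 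One should also check the regularity hypotheses of Theorem~\ref{thm:ini2} — namely $e\le a\le C_1 b$ (implied by $a\asymp b$) and the smallness condition $\frac{k a\log a}{\lambda_k^2}\le c$ — hold; the latter follows from \eqref{eq:cor-upper2-1} resp.~\eqref{eq:cor-upper2-2} for $n$ large, since the left side is $o(1)$ under those hypotheses. When $a\asymp b$ may fail but $k$ is fixed, I would instead invoke Theorem~\ref{thm:upper-a-ll-b} (using the truncated $t_u$ in \eqref{eq:t-set-cap}); by Remark~\ref{rmk:weak-gamma} the needed rate relaxes to $\gamma = o(k^{-1})$, and since $k$ is fixed, \prettyref{thm:ini2} (which for fixed $k$ does not require $a\le C_1 b$) still gives $\gamma\asymp a\log a/\lambda_k^2\to 0$ under \eqref{eq:cor-upper2-1} or \eqref{eq:cor-upper2-2}, so the hypothesis of Theorem~\ref{thm:upper-a-ll-b} is met and \eqref{eq:upper-bound-a-ll-b} follows.

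The main obstacle I anticipate is the leave-one-out bookkeeping: one must argue that applying Theorem~\ref{thm:ini2} to each of the $n$ submatrices $A_{-u}$ — rather than to $A$ itself — costs nothing asymptotically, both in the rate $\gamma$ (the parameters $a,b,k,\lambda,\beta$ for $A_{-u}$ must be shown to be within $\Theta(n-1,k,a(1+o(1)),b(1+o(1)),\lambda(1-o(1)),\beta';\alpha)$ for a possibly slightly enlarged but still constant $\beta'$) and in the probability (a union bound over $u$ is absorbed by choosing $C'$ one larger, which is why the high-probability statement $1-n^{-C'}$ for \emph{arbitrary} $C'$ in Theorem~\ref{thm:ini2} is essential — as emphasized in the remark following it). Everything else is a routine matching of rates.
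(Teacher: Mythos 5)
Your proposal is correct and follows essentially the same route as the paper, which proves this result (by reference to the proof of Theorem~\ref{cor:upper1}) by combining Theorem~\ref{thm:ini2} with Proposition~\ref{prop:eigen} to verify Condition~\ref{cond:init} at the rate $\gamma\asymp a\log a/\lambda_k^2$ and then invoking Theorems~\ref{thm:upper} and~\ref{thm:upper-a-ll-b}; your translations of this rate into \eqref{eq:cor-upper2-1} and \eqref{eq:cor-upper2-2} match, and your attention to the leave-one-out bookkeeping is more careful than the paper's. One small caveat: Remark~\ref{rmk:weak-gamma} applies only when $a,b$ are known and so is not available here, but your fixed-$k$ case survives without it because \eqref{eq:gamma-cond-1} and \eqref{eq:gamma-cond-2} are directly implied by \eqref{eq:cor-upper2-1} and \eqref{eq:cor-upper2-2} when $k=O(1)$.
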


Last but not least, we would like to point out that when the key parameters $a$ and $b$ are known, 
% for either $\Theta_0(n,k,a,b,\beta)$ or $\Theta(n,k,a,b,\lambda,\beta;\alpha)$, 
we can obtain the desired performance guarantee under weaker conditions as summarized in the following theorem.

\begin{theorem}[The case of known $a,b$]
\label{thm:upper-bound-ab-known}
Suppose $a, b$ are known.
Consider \prettyref{algo:refine} initialized by $\sigma^0$ with USC($\tau$) with $\tau = C a$ for some sufficiently large constant $C > 0$ and $\wh{a}_u = a$, $\wh{b}_u = b$ in \eqref{eq:ab-esti} for all $u\in [n]$.
If as $n\to\infty$, $a\asymp b$ and
\begin{align}
	\label{eq:ab-known-cond1}
	\frac{(a-b)^2}{ak^3} \to \infty,
\end{align}
then there is a sequence $\eta \to 0$ such that \eqref{eq:upper-bound} holds with $\Theta = \Theta_0(n,k,a,b,\beta)$.
If as $n\to\infty$, $a\asymp b$ and
\begin{align}
		\label{eq:ab-known-cond2}
\frac{\lambda^2}{ak}\to\infty,
\end{align}
then \eqref{eq:upper-bound} holds with $\Theta = \Theta(n,k,a,b,\lambda,\beta;\alpha)$. 
If for either parameter space without assuming $a\asymp b$,  \eqref{eq:ab-known-cond1} or \eqref{eq:ab-known-cond2} holds respectively, then \eqref{eq:upper-bound-a-ll-b} holds 
% for arbitrarily small constant $\epsilon_0\in (0,1)$ 
if in addition $t_u$ is replaced by \eqref{eq:t-set-cap}.

If instead NSC($\tau$) is used for initialization with $\tau = C a$ for some sufficiently large constant $C > 0$, then the above conclusions hold if we replace \eqref{eq:ab-known-cond1} with $\frac{(a-b)^2}{ak^3\log a}\to\infty$ and \eqref{eq:ab-known-cond2} with $\frac{\lambda^2}{ak\log a}\to\infty$, respectively.
\end{theorem}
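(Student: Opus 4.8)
The plan is to deduce Theorem~\ref{thm:upper-bound-ab-known} from the refinement guarantees already in hand — the ``known $a,b$'' version of Theorem~\ref{thm:upper} described in Remark~\ref{rmk:weak-gamma}, and Theorem~\ref{thm:upper-a-ll-b} when $a\asymp b$ is dropped — once the spectral initializer is shown to meet Condition~\ref{cond:init} with a sufficiently small $\gamma$. Since $a$ and $b$ are plugged directly into \eqref{eq:ab-esti} and hence into \eqref{eq:t-set}, no connectivity matrix is estimated inside Algorithm~\ref{algo:refine}, and scrutinizing the proofs of Theorems~\ref{thm:upper} and \ref{thm:upper-a-ll-b} exactly as in Remark~\ref{rmk:weak-gamma} shows that the requirement \eqref{eq:gamma-cond-1}--\eqref{eq:gamma-cond-2} on the initializer relaxes to the single condition $\gamma=o(k^{-1})$. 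So it remains to (a) produce an initializer $\sigma^0$ satisfying Condition~\ref{cond:init} with $\gamma=o(k^{-1})$ over the relevant parameter space, and (b) check that \eqref{eq:ab-known-cond1}/\eqref{eq:ab-known-cond2} (and their $\log a$ counterparts) also force the leftover hypothesis $\frac{(a-b)^2}{ak\log k}\to\infty$ of Theorems~\ref{thm:upper}/\ref{thm:upper-a-ll-b}.

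For (a) with USC initialization, fix $u\in[n]$ and run USC$(Ca)$ followed by Algorithm~\ref{algo:greedy} (with $\mu$ small enough for Theorem~\ref{thm:ini1}) on the leave-one-out adjacency matrix $A_{-u}$. This is an SBM on $n-1$ nodes whose population matrix $P_{-u}$ is a principal submatrix of $P$, and Weyl's inequality gives $\lambda_k(P_{-u})\geq\lambda_k(P)-O(a/\sqrt n)=(1-o(1))\lambda_k$, since $a/\sqrt n=o(\lambda_k)$ under \eqref{eq:ab-known-cond1}/\eqref{eq:ab-known-cond2}; moreover $\bar d\asymp a$ with probability $1-n^{-C'}$ whenever $a\asymp b$ or $k$ is fixed, so the deterministic threshold $\tau=Ca$ is covered by the proof of Theorem~\ref{thm:ini1} with $\tau$ a large constant multiple of the typical degree. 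Hence Theorem~\ref{thm:ini1}, with exponent $C'=1+\delta$, yields $\ell(\sigma,\sigma^0_u)\leq Ca/\lambda_k^2$ with probability at least $1-C_0 n^{-(1+\delta)}$, uniformly in $u$ and in $(B,\sigma)\in\Theta$ — the hypothesis \eqref{eq:assume} being automatic since $ka/\lambda_k^2=o(1)$ here, and $a\geq e$ holding eventually because \eqref{eq:ab-known-cond1}/\eqref{eq:ab-known-cond2} force $a\to\infty$. This is Condition~\ref{cond:init} with $\gamma\asymp a/\lambda_k^2$. Specializing via Proposition~\ref{prop:eigen}: on $\Theta_0(n,k,a,b,\beta)$ one has $\lambda_k\geq\frac{a-b}{2\beta k}$, so $\gamma\lesssim\frac{ak^2}{(a-b)^2}$, which is $o(k^{-1})$ under \eqref{eq:ab-known-cond1}; on $\Theta(n,k,a,b,\lambda,\beta;\alpha)$ one has $\lambda_k\geq\lambda$, so $\gamma\lesssim a/\lambda^2$, which is $o(k^{-1})$ under \eqref{eq:ab-known-cond2}. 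For (b), $\frac{(a-b)^2}{ak\log k}\to\infty$ follows on $\Theta_0$ from \eqref{eq:ab-known-cond1} since $k^3\geq k\log k$, and on $\Theta$ from \eqref{eq:ab-known-cond2} together with the standing assumption \eqref{eq:lambda}: as the worst-case $\lambda_k$ over $\Theta_0$ has order $\frac{a-b}{k}$ (Proposition~\ref{prop:eigen}), $\Theta_0\subset\Theta$ forces $\lambda\lesssim\frac{a-b}{k}$, whence $\frac{(a-b)^2}{ak\log k}\gtrsim\frac{(a-b)^2}{ak^3}\gtrsim\frac{\lambda^2}{ak}\to\infty$. Feeding all of this into the known-$a,b$ version of Theorem~\ref{thm:upper} yields \eqref{eq:upper-bound} on $\Theta_0$ under \eqref{eq:ab-known-cond1} and on $\Theta$ under \eqref{eq:ab-known-cond2}.

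When $a\asymp b$ is dropped but $k$ is fixed, the same initialization bound holds because Theorem~\ref{thm:ini1} needs no hypothesis $a\leq C_1 b$ for fixed $k$, and now $\ell(\sigma,\sigma^0_u)\lesssim a/\lambda_k^2=o(1)=o(k^{-1})$; so Theorem~\ref{thm:upper-a-ll-b} — whose $\gamma$-condition likewise relaxes to $o(k^{-1})$ when $a,b$ are known — applied with $t_u$ replaced by \eqref{eq:t-set-cap} gives \eqref{eq:upper-bound-a-ll-b}. For NSC initialization everything is identical except that Theorem~\ref{thm:ini2} gives $\ell(\sigma,\sigma^0_u)\lesssim\frac{a\log a}{\lambda_k^2}$ and invokes \eqref{eq:assume2}; the computation above then turns $\gamma=o(k^{-1})$ into $\frac{(a-b)^2}{ak^3\log a}\to\infty$ on $\Theta_0$ and $\frac{\lambda^2}{ak\log a}\to\infty$ on $\Theta$, the stated replacements. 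I expect the only delicate point to be step (a): carrying the spectral-clustering guarantee over to the $n$ leave-one-out graphs $A_{-u}$ with a failure probability that is a polynomial in $n$ of arbitrarily high degree (as Condition~\ref{cond:init} requires) while keeping $\lambda_k(P_{-u})$ comparable to $\lambda_k$; the rest is routine bookkeeping with $I^*$, $\lambda_k$, and $\bar d$.
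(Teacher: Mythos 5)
Your proposal follows essentially the same route as the paper: reduce to the known-$a,b$ relaxation of Theorems~\ref{thm:upper} and \ref{thm:upper-a-ll-b} (Condition~\ref{cond:init} with $\gamma=o(k^{-1})$, as in Remark~\ref{rmk:weak-gamma}, since Lemma~\ref{lmm:ab-est} is no longer needed), verify that \eqref{eq:ab-known-cond1}/\eqref{eq:ab-known-cond2} imply the standing hypothesis $\frac{(a-b)^2}{ak\log k}\to\infty$ via $\lambda\lesssim\frac{a-b}{k}$, and feed in Theorems~\ref{thm:ini1}/\ref{thm:ini2} with the deterministic threshold $\tau=Ca$ together with Proposition~\ref{prop:eigen}. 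You actually supply more detail than the paper does on the leave-one-out issue (interlacing/Weyl control of $\lambda_k(P_{-u})$ and the verification of \eqref{eq:assume} and of $a\to\infty$), which is welcome.

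One point where you under-deliver relative to the stated theorem: for the third claim (dropping $a\asymp b$) you justify the initialization only "because Theorem~\ref{thm:ini1} needs no hypothesis $a\leq C_1 b$ for fixed $k$," i.e.\ you invoke the fixed-$k$ clause of Theorems~\ref{thm:ini1}/\ref{thm:ini2}. But the theorem here does not fix $k$, and the whole point of the known-$a,b$ setting is that the hypothesis $a\leq C_1 b$ in Theorems~\ref{thm:ini1}/\ref{thm:ini2} is used \emph{only} to guarantee that the data-driven threshold $\tau=C_2\bar d$ lands in $[C_1a,C_2a]$ with high probability; once $\tau=Ca$ is set deterministically, their proofs go through for arbitrary $k$ with neither $a\leq C_1b$ nor fixed $k$ --- which is exactly what the paper's proof records and what is needed to cover growing $k$ when $a\not\asymp b$. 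You already state the key observation ("the deterministic threshold $\tau=Ca$ is covered by the proof of Theorem~\ref{thm:ini1}"), so the fix is immediate, but as written your argument for the third claim proves a strictly weaker statement; the parenthetical appeal to $\bar d\asymp a$ in your first paragraph is likewise unnecessary once $\tau=Ca$ is deterministic and should be dropped to avoid suggesting the data-driven calibration is still in play.
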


\iffalse
\begin{remark}[Weaker conditions when $a$ and $b$ are known]
In observation of \prettyref{rmk:weak-gamma}, when the parameters $a$ and $b$ are known, the conditions in Theorems \ref{cor:upper1} and \ref{cor:upper2} can be further weakened.
When USC is used for initialization, we can replace \eqref{eq:cor-upper1-1} with $\frac{(a-b)^2}{ak^3}\to \infty$ and 
\eqref{eq:cor-upper1-2} with $\frac{\lambda^2}{ak}\to\infty$.
When NSC is used, we can replace \eqref{eq:cor-upper2-1} with $\frac{(a-b)^2}{ak^3\log a}\to\infty$ and \eqref{eq:cor-upper2-2} with $\frac{\lambda^2}{ak\log a}\to\infty$.
% Moreover,the conclusions for the case of $\frac{a}{b}\to\infty$ in Theorems \ref{cor:upper1} and \ref{cor:upper2} continue to hold even when $k$ grows with $n$ if we use $\tau = C_1 a$ in either USC or NSC for initialization.
% \nb{the conclusions in this remark is not obvious from the context...}
% The conditions in
% both Theorem  \ref{cor:upper1} and Theorem  \ref{cor:upper2} can be weakened if the values of $a$ and $b$ are known. 
% In this case, the parameter estimation step (\ref{eq:ab-esti}) in Algorithm \ref{algo:refine} is no longer needed. Then, when USC is used in initialization, the minimax rates are achieved as long as $\frac{(a-b)^2}{ak^3}\rightarrow\infty$ for $\Theta_0(n,k,a,b,\beta)$ and $\frac{\lambda^2}{ak}\to\infty$ for $\Theta(n,k,a,b,\lambda,\beta;\alpha)$. When NSC is used in initialization, the minimax rates are achieved as long as $\frac{(a-b)^2}{ak^3\log a}\to\infty$ for $\Theta_0(n,k,a,b,\beta)$ and $\frac{\lambda^2}{ak\log a}\to\infty$ for $\Theta(n,k,a,b,\lambda,\beta;\alpha)$. 
\end{remark}
\fi

% \begin{remark}[Dependence on $a,b$]
% In the parameter space $\Theta(n,k,a,b,\lambda,\beta;\alpha)$, $a/n$ is the smallest among $\{B_{ii}\}_{i=1}^k$ and $b/n$ is the largest among $\{B_{ij}\}_{i\neq j}$. Thus, the dependence of the minimax rate of this space on $a,b$ only reflects the worst case performance, and is conservative. 
% It is an interesting open problem whether an algorithm can adapt itself to each parameter $\{B_{ij}\}_{i,j\in[k]}$ in the space.
% \nb{the last sentence needs to be rewritten!}
% \end{remark}

\subsection{Potential future research problems}

% \paragraph{$m$-fold node splitting.} 
% When we apply the generic refinement scheme according to Algorithm \ref{algo:refine}, we need to split the data into $\{u\}$ and $[n]\backslash\{u\}$ and use the spectral clustering result on $[n]\backslash\{u\}$ to estimate the label of $\{u\}$. 
% In total, we need to apply the initialization step $n$ times. 
% % Though the independence property we gain from sample splitting allow us to prove the desired misclassification proportion, it inevitably increases the computational cost.
% % On the other hand, the numerical results in Section \ref{sec:simulation} and Section \ref{sec:realdata} show that even without sample splitting, we can still achieve as good performance in those examples. 
% To save computational cost, we can also use the following refinement scheme based on $m$-fold node splitting:
% \begin{enumerate}
% \item Divide the $n$ nodes randomly into $m$ folds, $[n]=\cup_{l=1}^m\mathcal{N}_l$ with $|\mathcal{N}_l| \approx n/m$.
% \item For each $l\in[m]$, apply spectral clustering on $[n]\backslash\mathcal{N}_l$ and use the initial labels which are then used to estimate each node of $\mathcal{N}_l$ by Algorithm \ref{algo:refine}.
% \end{enumerate}
% It is easy to see that the case $m=n$ is equivalent to the original sample splitting. By choosing a smaller $m$, the computational cost can be significantly reduced. 
% Moreover, all theoretical results in this paper continue to hold for any $m$ satisfying $m=o(n)$ and $m^{-1}=o(1)$.

\paragraph{Simplified version of \prettyref{algo:refine} and iterative refinement} 
In simulation studies, we experimented a simplified version of \prettyref{algo:refine} (with precise description as \prettyref{algo:simple} in appendix) and showed that it provided similar performance to \prettyref{algo:refine} on simulated datasets. 
Moreover, for the political blog data, we showed that iterative application of this simplified refinement scheme kept driving down the number of misclassified nodes till convergence.
It is of great interest to see if comparable theoretical results to \prettyref{thm:upper} could be established for the simplified and/or the iterative version, 
and if the iterative version converges to a local optimum of certain objective function for community detection.
Though answering these intriguing questions is beyond the scope of the current paper, we think it can serve as an interesting future research problem.
 
% we apply the refinement scheme iteratively. Figure \ref{fig:performance} shows that it helps to reduce the error and it eventually converges and does not change the result anymore. From the derivation of our algorithm in Section \ref{sec:method}, the refinement step can be regarded as updating the (approximate) likelihood function in a coordinate-wise way. Therefore, it is plausible that solution converges to a local optima of the likelihood function after iterative refinement. Though our theoretical results show that the one-step refinement already gives the minimax optimal rate, more refinement steps may lead to a better constant before the rate, which is an important goal to pursue in practice. On the other hand, when the initialization is not satisfactory, the iterative refinement has a potential to lead the solution to a bad local optima. A rule of thumb to prevent this is by checking the value of the likelihood function.

\paragraph{Data-driven choice of $k$}

% In this paper, we have not dealt with the issue of choosing the number of communities $k$ based on data. 
The knowledge of $k$ is assumed and is used in both methodology and theory of the present paper. 
Date-driven choice of $k$ is of both practical importance and contemporary research interest, and researchers have proposed various ways to achieve this goal for stochastic block model, including cross-validation \citep{chen2014network}, Tracy--Widom test \citep{lei2014goodness}, information criterion \citep{saldana2014many}, likelihood ratio test \citep{wang2015likelihood}, etc. 
Whether these methods are optimal and whether it is possible to select $k$ in a statistically optimal way remains an important open problem.

\paragraph{More general models}

The results in this paper cover a large range of parameter spaces for stochastic block models and we show the competitive performance of the proposed algorithm both in theory and on numerical examples. 
% However, we should also notice that the algorithm is trying to update the likelihood function, so it is sensitive to the stochastic block model assumption. When such assumption does not hold, the refinement scheme in Algorithm \ref{algo:refine} may result in even worse misclassification proportion than spectral clustering. 
Despite its popularity, stochastic block model has its own limits for modeling network data.
Therefore, 
an important future research direction is to design computationally feasible algorithms that can achieve statistically optimal performance for more general network models, such as degree-corrected stochastic block models.
 % which requires both statistical optimality and robustness to model misspecification.

%!TEX root = SBM_algo.tex

% \newpage
\section{Proofs of main results}
\label{sec:proof}

The main result of the paper, \prettyref{thm:upper}, is proved in Section \ref{sec:hahaha}. Theorem \ref{thm:ini1} and Theorem \ref{thm:ini2} are proved in Section \ref{sec:hehehe} and Section \ref{sec:xixixi} respectively. The proofs of the remaining results, together with some auxiliary lemmas, are given in the appendix.

\subsection{Proof of \prettyref{thm:upper}}\label{sec:hahaha}

%!TEX root = SBM_algo.tex

% \subsubsection{Technical lemmas}

We first state a lemma that guarantees the accuracy of parameter estimation in Algorithm \ref{algo:refine}.
\begin{lemma}
\label{lmm:ab-est}
% Let $A \in \sth{0,1}^{n\times n}$ be a realization of a SBM in \eqref{eq:para-space}.
Let $\Theta = \Theta(n,k,a,b,\lambda,\beta;\alpha)$.
Suppose as $n\to\infty$, $\frac{(a-b)^2}{ak} \to \infty$ and \prettyref{cond:init} holds with $\gamma$ satisfying \eqref{eq:gamma-cond-1} and \eqref{eq:gamma-cond-2}.
% there exists a sequence $\gamma  = \gamma$ such that
% \begin{align}
% 	\label{eq:est-cond-new}
% \frac{(a-b)^2}{ak} \to \infty, \quad
% \gamma k = o\pth{\frac{a-b}{a}}
% \quad \mbox{and} \quad
% k \gamma \log{1\over \gamma} = O(1),
% \end{align}
% and there is 
% a clustering method $\wt\sigma$ applied on $A$ such that for an absolute constant $\delta > 0$ and some $\pi\in S_k$
% \begin{align*}
% \Prob\sth{\loss_0(\pi(\sigma), \wt\sigma) \leq \gamma} \geq 1 - C n^{-(1+\delta)}.
% \end{align*}
Then there is a sequence $\eta\to 0$ as $n\to\infty$ and a constant $C > 0$ such that
 % there exists a sequence $\eta = o(1)$ as $n\to \infty$, such that with probability at least $1 - Cn^{-(1+\delta)}$, 
\begin{align}
	\label{eq:B-est-upper}
 \min_{u\in [n]}\inf_{(B,\sigma)\in \Theta} \Prob\sth{
\min_{\pi\in S_k}\max_{i, j\in [k]} |\wh{B}_{ij}^u - B_{\pi(i) \pi(j)}| \leq \eta \pth{\frac{a-b}{n}} } \geq 1 - C n^{-(1+\delta)}.
\end{align}
For $\Theta = \Theta_0(n,k,a,b,\beta)$, 
the conclusion \eqref{eq:B-est-upper} continues to hold even when the assumption \eqref{eq:gamma-cond-2} is dropped.
\end{lemma}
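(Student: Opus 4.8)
The plan is to show that the leave-one-out initializer $\sigma_u^0$, which is weakly consistent under \prettyref{cond:init}, clusters all but a $\gamma$-fraction of nodes correctly (up to a label permutation $\pi$), and that the empirical block connection frequencies $\wh B_{ij}^u$ computed on the mis-clustered estimated communities $\wt\calC_i^u$ are therefore close to the population values $B_{\pi(i)\pi(j)}$. First I would fix $u$ and work on the high-probability event from \prettyref{cond:init} that $\loss(\sigma,\sigma_u^0)\leq\gamma$; on this event there is a permutation $\pi\in S_k$ with $|\{v\neq u:\sigma_u^0(v)\neq\pi(\sigma(v))\}|\leq\gamma n$. Relabel so that $\pi=\id$ for notational ease. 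Since every community has size in $[\frac{n}{\beta k}-1,\frac{\beta n}{k}+1]$ and $\gamma=o\pth{\frac{1}{k\log k}}=o\pth{\frac1k}$, the symmetric difference $\wt\calC_i^u\triangle\calC_i$ has size at most $\gamma n = o\pth{\frac{n}{k}}$, so $|\wt\calC_i^u| = (1+o(1))\,n_i \asymp \frac nk$ and the denominators in \eqref{eq:B-est} are all of order $n^2/k^2$.

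Next I would decompose $\wh B_{ij}^u - B_{ij}$ into a ``population mislabeling'' error and a ``fluctuation'' error. For the population part: replacing $\wt\calC_i^u$ by $\calC_i$ in the edge counts changes the numerator by at most (number of edges incident to the $\leq\gamma n$ mislabeled vertices) $= O(\gamma n)\cdot O(a)$ in expectation, hence after dividing by the $\asymp n^2/k^2$ denominator contributes $O\pth{\frac{\gamma a k^2}{n}}$ to the error. Under \eqref{eq:gamma-cond-2}, $\gamma = o\pth{\frac{a-b}{ak}}$, this is $o\pth{\frac{(a-b)k}{n}}$; we need it to be $o\pth{\frac{a-b}{n}}$ — wait, this is off by a factor $k$, so one must be slightly more careful: since mislabeled vertices are spread across communities and each estimated community loses/gains $O(\gamma n)$ vertices but the per-vertex degree contribution into a \emph{single} target community is only $O(a/k)$ not $O(a)$, the numerator change is $O(\gamma n \cdot a/k)$, giving error $O\pth{\frac{\gamma a k}{n}} = o\pth{\frac{a-b}{n}}$ as required. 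For the fluctuation part, conditionally on $\sigma_u^0$ (which depends only on $A_{-u}$, but the counts $\wt\calE_i^u,\wt\calE_{ij}^u$ also only involve $A_{-u}$, so there is no awkward dependence on row $u$), $|\wt\calE_{ij}^u|$ is a sum of $\asymp n^2/k^2$ independent Bernoullis with success probability $\Theta(a/n)$ or $\Theta(b/n)$, so by Bernstein's inequality $\big||\wt\calE_{ij}^u| - \Expect[\,\cdot\,]\big| \leq t$ with probability $\geq 1 - 2\exp\pth{-c\min\pth{\frac{t^2 k^2}{n a},\,t}}$; choosing $t = \epsilon_n\frac{n a}{k^2}$ for a slowly vanishing $\epsilon_n$ with $\epsilon_n^2\cdot\frac{na}{k^2}\gg\log n$ — which is possible because $\frac{(a-b)^2}{ak}\to\infty$ together with $a\asymp b$ forces $\frac{na}{k^2}\gtrsim\frac{n(a-b)^2}{ak^2}\cdot\frac{a^2}{(a-b)^2}$, and one checks $\frac{na}{k^2}\to\infty$ fast enough — yields fluctuation error $o\pth{\frac{a-b}{n}}$ after normalization, with failure probability $O(n^{-(1+\delta)})$ after a union bound over the $O(k^2)$ pairs $(i,j)$.

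Combining the two parts gives $\max_{i,j}|\wh B_{ij}^u - B_{ij}|\leq \eta\frac{a-b}{n}$ for a suitable $\eta\to 0$, on an event of probability $\geq 1 - Cn^{-(1+\delta)}$; taking the minimum over $u\in[n]$ and infimum over $(B,\sigma)\in\Theta$ is harmless since all the bounds above were uniform in $u$ and in the parameter. Finally, for $\Theta = \Theta_0(n,k,a,b,\beta)$ the connectivity matrix has only two distinct entry values $\frac an,\frac bn$, so the ``population mislabeling'' error in each $\wh B_{ii}^u$ (resp.\ $\wh B_{ij}^u$) can be absorbed more crudely: it suffices to control the \emph{aggregate} within- and between-community edge counts rather than per-pair ones, and the key observation is that moving $O(\gamma n)$ vertices changes the relevant aggregate edge count by $O(\gamma n \cdot a)$ against a denominator $\asymp n^2/\beta k \cdot(\text{number of pairs})$, giving an error that is $o\pth{\frac{a-b}{n}}$ merely from $\gamma = o\pth{\frac1{k\log k}}$ — so \eqref{eq:gamma-cond-2} is not needed. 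The main obstacle I anticipate is the bookkeeping in the population-mislabeling term: one must argue that the $\gamma n$ misplaced vertices cannot all pile into one pair of estimated communities in a way that inflates a single $\wh B_{ij}^u$, which uses that each vertex contributes only $O(a/k)$ edges into any fixed community and that the denominators stay $\asymp n^2/k^2$; getting the $k$-powers exactly right here is what dictates the precise form of conditions \eqref{eq:gamma-cond-1}–\eqref{eq:gamma-cond-2}.
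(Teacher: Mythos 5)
Your decomposition into a mislabeling--bias term and a fluctuation term, and your bias computation (including the correction that each misplaced vertex contributes only $O(a/k)$ expected edges into a fixed estimated community, so the bias is $O(\gamma a k/n)=o\pth{\frac{a-b}{n}}$ under \eqref{eq:gamma-cond-2}), match what the paper does. The genuine gap is in the fluctuation step. You assert that, conditionally on $\sigma_u^0$, the counts $|\wt\calE_i^u|$ and $|\wt\calE_{ij}^u|$ are sums of independent Bernoullis to which Bernstein's inequality applies directly, with a union bound only over the $O(k^2)$ pairs $(i,j)$. This is not valid: $\sigma_u^0$ is computed from $A_{-u}$, and the edge counts are computed from the very same $A_{-u}$, so conditioning on $\sigma_u^0$ (equivalently, on the random sets $\wt\calC_i^u$) changes the conditional law of exactly the edges being counted. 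The estimated communities are data-selected sets; a clustering procedure could in principle select vertices so as to inflate within-cluster edge counts, and nothing in your argument rules this out. The absence of dependence on row $u$, which you correctly note, is irrelevant to this issue.

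The paper closes this gap with a uniform concentration argument: on the event $E_u$ each $\wt\calC_i^u$ lies in the class of deterministic subsets satisfying \eqref{eq:Ci-cond}, that class has cardinality at most $\exp\sth{C_1\gamma n\log\gamma^{-1}}$, and for each fixed member $\calC_i'$ Bernstein's inequality applies legitimately. The deviation level must then be chosen large enough for the Bernstein tail to beat the entropy $\exp\sth{C_1\gamma n\log\gamma^{-1}}$ in addition to the $n^{-(3+\delta)}$ needed for the remaining union bounds; this is precisely where $\frac{(a-b)^2}{ak}\to\infty$ and $k\gamma\log\gamma^{-1}=O(1)$ (from \eqref{eq:gamma-cond-1}) enter, to show the resulting deviation is still $o\pth{\frac{a-b}{n}}$ after normalization. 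Your choice of $t$ with $\epsilon_n^2\,na/k^2\gg\log n$ accounts only for polynomially many events and would not survive the required union over candidate subsets. A secondary point: your explanation of why \eqref{eq:gamma-cond-2} can be dropped for $\Theta_0$ (controlling aggregate rather than per-pair counts) is not the right reason; the correct reason is that in $\Theta_0$ all within-community probabilities equal $\frac an$ and all between-community probabilities equal $\frac bn$, so a misplaced vertex perturbs the mean edge probability by at most $\frac{a-b}{n}$ rather than by $O\pth{\frac an}$, making the bias $O\pth{\gamma k\frac{a-b}{n}}=o\pth{\frac{a-b}{n}}$ under \eqref{eq:gamma-cond-1} alone.
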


\begin{proof}
$1^\circ$ Let $\Theta = \Theta(n,k,a,b,\lambda,\beta;\alpha)$.
For any community assignments $\sigma_1$ and $\sigma_2$, define
\begin{align}
\label{eq:loss-0}
\loss_0(\sigma_1,\sigma_2) = \frac{1}{n}\sum_{u=1}^n\indc{\sigma_1(u)\neq\sigma_2(u)}.
\end{align}
Fix any $(B,\sigma)\in \Theta$ and $u\in [n]$.
Define event 
\begin{align}
E_u = \sth{\loss_0(\pi_u(\sigma), \sigma^0_u) \leq \gamma}.
\end{align}
To simplify notation, assume that $\pi_u = \mathrm{Id}$ is the identity permutation. 

Fix any $i\in [k]$. On $E_u$, 
\begin{align}
	\label{eq:Ci-cond}
n_i\geq |\wt\calC_i^u\cap \calC_i| \geq n_i-\gamma_1 n,\,\,
|\wt\calC_i^u\cap \calC_i^c|\leq \gamma_2 n,
\quad \mbox{where $\gamma_1, \gamma_2\geq 0$ and $\gamma_1 + \gamma_2 \leq \gamma$}.
\end{align}
Let $\calC_i'$ be any deterministic subset of $[n]$ such that \eqref{eq:Ci-cond} holds with $\wt\calC_i^u$ replaced by $\calC_i'$. 
By definition, there are at most 
\begin{align*}
\sum_{l=0}^{\gamma n} {n_i\choose l} \sum_{m=0}^{\gamma n}{n-n_i\choose m}
& \leq (\gamma n + 1)^2 \pth{e n_i \over \gamma n}^{\gamma n} 
\pth{e n\over \gamma n}^{\gamma n} 
\leq \exp\sth{2\log(\gamma n + 1) + 2\gamma n\log \frac{e}{\gamma}}\\
& \leq \exp\sth{C_1 \gamma n\log \frac{1}{\gamma}}
\end{align*}
different subsets with this property where $C_1 > 0$ is an absolute constant.
Let $\calE_i'$ be the edges within $\calC_i'$. 
Then $|\calE_i'|$ consists of independent Bernoulli random variables, where at least $(1-\beta \gamma k)^2$ proportion of them follow the Bern$(B_{ii})$ distribution, at most $(\beta\gamma k)^2$ proportion that are stochastically smaller than Bern$(\frac{\alpha a}{n})$ and stochastically larger than Bern$(\frac{a}{n})$,
and at most $2\beta\gamma k$ proportion are stochastically smaller than Bern$(\frac{b}{n})$. 
Therefore, we obtain that
\begin{align}
	\label{eq:mean-range}
(1-\beta\gamma k)^2 B_{ii} + (\beta\gamma k)^2 \frac{a}{n}
\leq 
\Expect\qth{\frac{|\calE_i'|}{\frac{1}{2}|\calC_i'|(|\calC_i'| - 1)}}
\leq 
\max_{t\in [0,\beta\gamma k]}\sth{(1-t)^2 B_{ii} + t^2 \frac{\alpha a}{n} + 2t\frac{b}{n}}.
\end{align}
Note that the LHS is $(1 - (2+o(1))\beta \gamma k)B_{ii}$.
On the other hand, under condition \eqref{eq:gamma-cond-2},
% \nb{under the condition that $\gamma k\ll \frac{a-b}{a}$},
the RHS is attained at $t = 0$ and equals $B_{ii}$ exactly.
Thus, we conclude that 
\begin{align}
	\label{eq:bias-control}
\left| \Expect\qth{\frac{|\calE_i'|}{\frac{1}{2}|\calC_i'|(|\calC_i'| - 1)}} - B_{ii} \right| \leq C\beta\gamma k \frac{\alpha a}{n}  = \eta'\pth{\frac{a-b}{n}}
\end{align}
for some $\eta'\to 0$ that depends only on $a,k,\alpha,\beta$ and $\gamma$,
where the last inequality is due to \eqref{eq:gamma-cond-2}.
 % holds since $\gamma k a \ll a-b$.

On the other hand, by Bernstein's inequality, for any $t > 0$, 
\begin{align*}
\Prob\sth{ \left| |\calE_i'| - \Expect|\calE_i'| \right| > t} \leq 
2\exp\sth{-\frac{t^2}{2(\frac{1}{2}(n_i+\gamma n)^2 \frac{\alpha a}{n} + \frac{2}{3} t)}}.
\end{align*}
Let
\begin{align*}
t^2 & = (n_i+\gamma n)^2 \frac{\alpha a}{n} (C_1\gamma n\log\gamma^{-1} + (3+\delta)\log n) \vee (2C_1\gamma n \log\gamma^{-1} + 2(3+\delta)\log n)^2 \\
& \lesssim \pth{\frac{n}{k}\sqrt{a\gamma \log \gamma^{-1}} + \gamma n\log \gamma^{-1} }^2,
\end{align*}
where we the second inequality holds since $\frac{\log x}{x}$ is monotone decreasing as $x$ increases and so $\gamma\log \gamma^{-1} \geq \frac{1}{n}\log n$ for any $\gamma \geq \frac{1}{n}$, which is the case of most interest since $\gamma < \frac{1}{n}$ leads to $\gamma = 0$ and so the initialization is already perfect.
Even when $\gamma = 0$, we can still continue to the following arguments by replacing every $\gamma$ with $\frac{1}{n}$ and all the steps continue to hold.
Thus, we obtain that for positive constant $C_{\alpha,\beta,\delta}$ that depends only on $\alpha,\beta$ and $\delta$,
\begin{align}
\Prob\sth{\left| |\calE_i'| - \Expect|\calE_i'| \right| > 
C_{\alpha,\beta,\delta} \pth{\frac{n}{k}\sqrt{a\gamma \log \gamma^{-1}} + \gamma n\log \gamma^{-1}} } 
\leq \exp\sth{-C_1\gamma n\log \gamma^{-1}}n^{-(3+\delta)}.
\end{align}
Thus, with probability at least $1-\exp\sth{-C_1\gamma n\log \gamma^{-1}}n^{-(3+\delta)}$,
\begin{align}
	\label{eq:error-control}
\left| \frac{|\calE_i'|}{\frac{1}{2}|\calC_i'|(|\calC_i'|-1)} - \Expect\frac{|\calE_i'|}{\frac{1}{2}|\calC_i'|(|\calC_i'|-1)} \right|	
\leq C_{\alpha, \beta,\delta}\pth{\frac{k}{n}\sqrt{a\gamma \log \gamma^{-1}} + \frac{k^2\gamma \log \gamma^{-1}}{n}} = \eta'\pth{\frac{a-b}{n}},
\end{align}
where $\eta'\to 0$ depends only on $a,k,\alpha, \beta,\gamma$ and $\delta$.
Here, the last inequality holds since 
\begin{align*}
k\sqrt{a\gamma \log \gamma^{-1}}= \sqrt{ak} \sqrt{k\gamma \log \gamma^{-1}},
\end{align*}
where $\sqrt{ak}\ll a-b$ since $\frac{(a-b)^2}{ak}\to \infty$ and $k\gamma \log \gamma^{-1} = O(1)$, and
\begin{align*}
k^2 \gamma \log \gamma^{-1} = k\gamma\log\gamma^{-1} \cdot k \lesssim k \ll \frac{(a-b)^2}{a} \lesssim a-b.
\end{align*}
We combine \eqref{eq:bias-control} and \eqref{eq:error-control} and apply the union bound 
% \nb{spell out more details here?} 
to obtain that for a sequence $\eta\to 0$ that depends only on $a,k,\alpha, \beta, \gamma$ and $\delta$, with probability at least $1-n^{-(3+\delta)}$
\begin{align}
\left| \frac{|\wt\calE_i^u|}{\frac{1}{2}|\wt\calC_i^u|(|\wt\calC_i^u|-1)} - B_{ii} \right|	\leq \eta \pth{a-b\over n}.
\end{align}
The proof for $B_{ij}$ estimation is analogous and hence is omitted. 
A final union bound on $i,j\in [k]$ leads to the desired claim since all the constants and vanishing sequences in the above analysis depend only on $a,b,k,\alpha,\beta,\gamma$ and $\delta$, but not on $u$, $B$ or $\sigma$.

\medskip

$2^\circ$
If $\Theta = \Theta_0(n,k,a,b,\beta)$, then condition \eqref{eq:gamma-cond-2} on $\gamma$ is no longer needed. 
This is because \eqref{eq:mean-range} can be replaced by
\begin{align*}
& \min_{t\in [0,\beta\gamma k]}\sth{(1-t)^2 \frac{a}{n} + 2t(1-t)\frac{b}{n} + t^2\frac{b}{n}}\\
& \hskip 4em \leq 
	\Expect\qth{\frac{|\calE_i'|}{\frac{1}{2}|\calC_i'|(|\calC_i'| - 1)}}
	\leq 
	\max_{t\in [0,\beta\gamma k]}\sth{(1-t)^2 \frac{a}{n} + t^2 \frac{a}{n} +2t(1-t)\frac{b}{n}},
\end{align*}
where the LHS equals $\frac{a}{n} - (1-\beta\gamma k(1+o(1)))\frac{a-b}{n} = \frac{a}{n} + o(\frac{a-b}{n})$ and the RHS equals $\frac{a}{n}$.
Thus, no additional condition is needed to guarantee \eqref{eq:bias-control} in the foregoing arguments. This completes the proof.
\end{proof}

The next two lemmas establish the desired error bound for the node-wise refinement.

\begin{lemma}
	\label{lmm:refine-nb}
Let $\Theta_0$ be defined as in \eqref{eq:para-space0} and $k\geq 2$.
Suppose as $n\to\infty$, $\frac{(a-b)^2}{ak}\to\infty$ and $a\asymp b$.
If there exists two sequences $\gamma = o(1/k)$ and $\eta = o(1)$, constants $C,\delta > 0$ and permutations $\sth{\pi_u}_{u=1}^n\subset S_k$ such that 
\begin{equation}
	\label{eq:refine-cond}
\begin{aligned}
\inf_{(B,\sigma)\in \Theta_0} 
\min_{u\in [n]}
	\Prob\sth{
	% \min_{\pi\in S_k}
	\loss_0(\pi_u(\sigma), \sigma_u^0) \leq \gamma,~
	|\wh{a}_u -a| \leq \eta(a-b),~ 
	|\wh{b}_u - b| \leq \eta (a-b)} 
	% ~~~  & \\
\geq 1 - Cn^{-(1+\delta)}. &	
\end{aligned}
\end{equation}
Then for 
$\wh{\sigma}_u(u)$ defined as in \eqref{eq:refine-nb} with $\rho = \rho_u$ in \eqref{eq:lambda-set}, there is a sequence $\eta' = o(1)$ such that for $k=2$,
$$
\sup_{(B,\sigma)\in \Theta_0}\max_{u\in [n]}\Prob\sth{\wh\sigma_u(u) \neq \pi_u(\sigma(u))} \leq 
(k-1)\exp\sth{-(1-\eta')\frac{nI^*}{2}}
+ C n^{-(1+\delta)},
$$
and for $k\geq 3$,
$$
\sup_{(B,\sigma)\in \Theta_0}\max_{u\in [n]}\Prob\sth{\wh\sigma_u(u) \neq \pi_u(\sigma(u))} \leq 
(k-1)\exp\sth{-(1-\eta')\frac{nI^*}{\beta k}}
+ C n^{-(1+\delta)}.
$$
\end{lemma}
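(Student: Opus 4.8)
\textbf{Proof proposal for Lemma~\ref{lmm:refine-nb}.}
The plan is to fix an arbitrary node $u$ and an arbitrary $(B,\sigma)\in\Theta_0$, and to condition on the event in \eqref{eq:refine-cond}, which holds except on a set of probability $Cn^{-(1+\delta)}$. On that event the initializer $\sigma^0_u$ agrees with $\pi_u(\sigma)$ on all but a $\gamma$-fraction of the $n-1$ nodes other than $u$, and the estimated parameters $\wh a_u,\wh b_u$ are within $\eta(a-b)$ of $a,b$. Without loss of generality take $\pi_u=\mathrm{Id}$. The key point is that the random edge variables $\{A_{uv}\}_{v\neq u}$ are \emph{independent of $A_{-u}$}, hence independent of $\sigma^0_u$, $\wh a_u$, $\wh b_u$, $t_u$ and $\rho_u$; so we may treat the estimated community structure $\wt\calC^u_1,\dots,\wt\calC^u_k$ and the penalty $\rho_u$ as deterministic (frozen) when we analyze the voting step \eqref{eq:refine-nb}.

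Next I would rewrite the event of misclassification. Writing $i^\star=\sigma(u)$, the voting rule misclassifies $u$ iff there is some $l\neq i^\star$ with
\[
\sum_{v:\sigma^0_u(v)=l}A_{uv}-\rho_u|\wt\calC^u_l|\ \ge\ \sum_{v:\sigma^0_u(v)=i^\star}A_{uv}-\rho_u|\wt\calC^u_{i^\star}|.
\]
By a union bound over the $k-1$ competitors it suffices to bound, for each fixed $l$, the probability that $\sum_{v}A_{uv}(\indc{\sigma^0_u(v)=l}-\indc{\sigma^0_u(v)=i^\star})\ge \rho_u(|\wt\calC^u_l|-|\wt\calC^u_{i^\star}|)$. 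I would then replace the \emph{estimated} communities by the \emph{true} ones at the cost of at most $2\gamma n$ edges whose labels are uncertain: since each $A_{uv}\le 1$, the difference between the true-community version and the estimated-community version of each side is at most $O(\gamma n)$, and $\rho_u$ times the size discrepancy $|\wt\calC^u_l|-|\calC_l||$ etc.\ is also $O(\rho_u\gamma n)$. Because $\rho_u=\Theta(a/n)$ (from $t_u=\Theta(1)$ when $a\asymp b$, using the parameter-estimation guarantee) and $\gamma=o(1/k)$, these correction terms are $o((a-b)\cdot \text{(community size)}/n)=o(nI^\star/k)$ in the exponent, hence negligible. So the problem reduces to a clean large-deviation estimate for $S:=\sum_{v\in\calC_l}A_{uv}-\sum_{v\in\calC_{i^\star}}A_{uv}$, where the first sum is $n_l$ i.i.d.\ $\mathrm{Bern}(b/n)$ and the second is $(n_{i^\star}-1)$ i.i.d.\ $\mathrm{Bern}(a/n)$, and we want $\Prob\{S\ge \rho_u(n_l-n_{i^\star})\}$.

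The crux is the Chernoff bound on this probability, and this is where the precise choice of $t_u$ and $\rho_u$ pays off. For any $t>0$,
\[
\Prob\{S\ge \rho_u(n_l-n_{i^\star})\}\le e^{-t\rho_u(n_l-n_{i^\star})}\Big(\tfrac{a}{n}e^{-t}+1-\tfrac{a}{n}\Big)^{n_{i^\star}-1}\Big(\tfrac{b}{n}e^{t}+1-\tfrac{b}{n}\Big)^{n_l}.
\]
With $t=t_u$ as in \eqref{eq:t-set} this is, up to the lower-order $(\cdot)^{-1}$ adjustment, $\big[e^{-t_u\rho_u}(\tfrac{a}{n}e^{-t_u}+1-\tfrac{a}{n})^{1/2}(\tfrac{b}{n}e^{t_u}+1-\tfrac{b}{n})^{1/2}\big]^{2n_{i^\star}}$ when $n_l=n_{i^\star}$; and the definition \eqref{eq:lambda-set} of $\rho_u$ is \emph{exactly} the value that makes $e^{-t_u\rho_u}$ equal to the geometric mean of the two Bernoulli generating functions, so that the bracket collapses to $\big[(\tfrac{a}{n}e^{-t_u}+1-\tfrac{a}{n})(\tfrac{b}{n}e^{t_u}+1-\tfrac{b}{n})\big]^{1/2}$, and substituting the minimizing $t_u$ makes \emph{this} equal to $\sqrt{\tfrac{a}{n}\tfrac{b}{n}}+\sqrt{(1-\tfrac{a}{n})(1-\tfrac{b}{n})}=e^{-I^\star/2}$ — i.e.\ the R\'enyi-$\tfrac12$ divergence $I^\star$ of \eqref{eq:I} appears with the sharp constant. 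When $n_l\ne n_{i^\star}$ (the $\beta>1$ case) the same computation with the genuinely different sizes $n_l,n_{i^\star}\in[\tfrac{n}{\beta k}-1,\tfrac{\beta n}{k}+1]$ gives an exponent $\ge (1-o(1))\,2\min(n_l,n_{i^\star})\cdot\tfrac{I^\star}{2}\ge (1-o(1))\tfrac{nI^\star}{\beta k}$; I would verify the asymmetric optimization gives the worst case at the smallest admissible community size. I would then need two technical checks: (i) that using $\wh a_u,\wh b_u$ in place of $a,b$ in $t_u,\rho_u$ only perturbs the exponent by a factor $1\pm o(1)$ — this follows from $|\wh a_u-a|\le\eta(a-b)$, $|\wh b_u-b|\le\eta(a-b)$, $a\asymp b$, and continuity/smoothness of \eqref{eq:t-set}--\eqref{eq:lambda-set}; and (ii) that the $(\cdot)^{-1}$ power corrections and the $O(\rho_u\gamma n)$ localization errors noted above are absorbed into the $o(1)$ in the exponent. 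Finally, union-bounding over the $\le k-1$ competing labels gives the claimed $(k-1)\exp\{-(1-\eta')nI^\star/2\}$ (resp.\ $(k-1)\exp\{-(1-\eta')nI^\star/(\beta k)\}$), plus the $Cn^{-(1+\delta)}$ from the bad event in \eqref{eq:refine-cond}; all constants depend only on $a,b,k,\beta,\gamma,\eta,\delta$ and not on $u,B,\sigma$, so taking a supremum over $(B,\sigma)\in\Theta_0$ and a maximum over $u$ preserves the bound. The main obstacle is item (i) together with the asymmetric-size optimization: making sure that plugging in the data-driven $t_u,\rho_u$ and carrying the unequal $n_l,n_{i^\star}$ through the Chernoff exponent still yields the sharp constant $1-o(1)$ rather than merely some constant $<1$.
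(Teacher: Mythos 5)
Your overall architecture --- freezing $A_{-u}$ by independence so that $\sigma^0_u,\wh a_u,\wh b_u,t_u,\rho_u$ can be treated as deterministic, union bounding over the $k-1$ competing labels, and running a Chernoff bound in which the choice of $\rho_u$ collapses the tilted generating functions and the choice of $t_u$ produces the R\'enyi exponent $e^{-I^*/2}$ --- is the same as the paper's. The genuine gap is the step where you ``replace the estimated communities by the true ones at a cost of at most $2\gamma n$ edges.'' Bounding that swap deterministically via $A_{uv}\le 1$ shifts the threshold of the large-deviation event by $\Theta(\gamma n)$, and in the Chernoff bound a threshold shift of $\Delta$ costs a multiplicative factor $e^{t_u\Delta}$ with $t_u\asymp (a-b)/a$. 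For $e^{t_u\cdot O(\gamma n)}$ to be absorbed into the $1-o(1)$ on the exponent you would need $\gamma n\,(a-b)/a = o\pth{nI^*/k}\asymp o\pth{(a-b)^2/(ak)}$, i.e.\ $\gamma=o\pth{\tfrac{a-b}{nk}}$ --- far stronger than the hypothesis $\gamma=o(1/k)$ whenever $a=o(n)$ (in the logarithmic-degree regime it would force the initializer to make only $O(\log n)$ errors in total). Your stated justification that these corrections are ``$o((a-b)\cdot(\text{community size})/n)$'' is a miscalculation: the edge-count correction is $O(\gamma n)$, not $O(\gamma a)$, and even the penalty correction $O(\rho_u\gamma n)=O(\gamma a)$ is not covered by $\gamma=o(1/k)$ alone when $a-b=o(a)$. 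So your item (ii) is not a routine check; as sketched, the reduction fails in the sparse regime.

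The paper avoids this by never swapping communities: it keeps the estimated counts $m_l=|\sth{v:\sigma^0_u(v)=l}|$ and $m_l'=|\sth{v:\sigma^0_u(v)=\sigma_u(v)=l}|$ inside the Chernoff bound and accounts for the at most $\gamma n$ misassigned nodes through the \emph{ratio of moment generating functions} at the tilt $t_u$, namely $\frac{pe^{t_u}+1-p}{qe^{t_u}+1-q} = 1+\frac{(p-q)(e^{t_u}-1)}{qe^{t_u}+1-q} = 1+O\pth{(p-q)^2/p} = 1+O(I^*)$ per node (using $e^{t_u}-1=O((a-b)/a)$, which is where $a\asymp b$ enters). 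Hence $\gamma n$ misassigned nodes cost only $e^{O(\gamma n I^*)}=e^{o(nI^*/k)}$ precisely when $\gamma=o(1/k)$ --- this is what calibrates the hypothesis. The threshold $\rho_u(m_l-m_1)$ is likewise kept in terms of the $m$'s: the definition of $\rho_u$ cancels it against the $(m_l-m_1)/2$ power of the generating functions up to the estimation errors $|\wh a_u-a|,|\wh b_u-b|\le\eta(a-b)$, so no comparison of $m_l$ with $n_l$ at scale $\gamma n$ is ever needed. If you replace your deterministic swap with this second-order accounting, the rest of your argument (the clean exponent computation, the worst-case sizes $n/2$ versus $n/(\beta k)$, and the final assembly) goes through as you describe.
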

\begin{proof}
In what follows, let $E_u$ denote the event in \eqref{eq:refine-cond}. 
For the sake of brevity, we let $p=a/n$, $q=b/n$, $\wh{p}_u = \wh{a}_u/n$ and $\wh{q}_u = \wh{b}_u/n$. Moreover, let $\sigma_u = \pi_u(\sigma)$,
$n_i = |\{v: \sigma_u(v) =i\}|$, 
$m_i = |\{v:\sigma^0_u(v)=i\}|$ and
$m_i'= |\{v:\sigma^0_u(v)= \sigma_u(v)= i\}|$. 
Without loss of generality, let $\sigma_u(u)=1$. 

Then we have
\begin{align}
	\label{eq:pl-decomp}
\Prob\sth{\wh\sigma_u(u) \neq 1 ~\mbox{and}~ E_u} 
\leq \sum_{l\neq 1}
\Prob\sth{E_u \mbox{~and~} \sum_{\sigma_u(v) = l}A_{uv}  - \sum_{\sigma_u(v) = 1}A_{uv} \geq  \rho_u(m_l - m_1) } = \sum_{l\neq 1} p_l.
\end{align}
Now we bound each $p_l$. By the independence structure and Chernoff bound, we have
\begin{eqnarray}
\nonumber p_l &\leq& \mathbb{E}\Big\{\exp\left(-t_u \rho_u(m_l-m_1)\right)(qe^{t_u}+1-q)^{m_l'}(pe^{t_u}+1-p)^{m_l-m_l'}\nonumber\\
& & ~~~~~ (pe^{-t_u}+1-p)^{m_1'}(qe^{-t_u}+1-q)^{m_1-m_1'}\indc{E_u}\Big\} \\
\label{eq:fix1} &\leq& \mathbb{E}\left\{\exp\left(-t_u\rho_u(m_l-m_1)\right)(qe^{t_u}+1-q)^{m_l}(pe^{-{t_u}}+1-p)^{m_1}\indc{E_u}\right\} \\
\label{eq:fix2} && \times\mathbb{E}\left\{\left(\frac{pe^{t_u}+1-p}{qe^{t_u}+1-q}\right)^{m_l-m_l'}\left(\frac{qe^{-t_u}+1-q}{pe^{-t_u}+1-p}\right)^{m_1-m_1'}\indc{E_u}\right\}.
\end{eqnarray}
We are going to give bounds for the terms in (\ref{eq:fix1}) and (\ref{eq:fix2}) respectively. Before doing that, we need some preparatory inequalities. Define $t^*$ through the equation
$$e^{t^*}=\sqrt{\frac{p(1-q)}{q(1-p)}}.$$
Then, on the event $E_u$, 
\begin{equation}
e^{t_u-t^*}+e^{t^*-t_u} \leq \exp(C_1\eta),\label{eq:repair1}
\end{equation}
for some constant $C_1>0$. Moreover, 
\begin{equation}
|e^{t_u}-1|\vee|e^{-t_u}-1| \leq C_2\frac{p-q}{p} = C_2\frac{a-b}{a},\label{eq:repair2}
\end{equation}
for some constant $C_2>0$. 
Therefore, for the term in \eqref{eq:fix1}, on the event $E_u$, 
\begin{eqnarray}
\nonumber && \exp\left(-t_u\rho_u(m_l-m_1)\right)(qe^{t_u}+1-q)^{m_l}(pe^{-t_u}+1-p)^{m_1} \\
\label{eq:fix3} &=& \exp\left(-{t_u}{\rho_u}(m_l-m_1)\right)\left(qe^{t_u}+1-q\right)^{(m_l-m_1)/2}\left(pe^{-t_u}+1-p\right)^{(m_1-m_l)/2} \\
\label{eq:fix4} && \times \left(qe^{t_u}+1-q\right)^{(m_1+m_l)/2}\left(pe^{-t_u}+1-p\right)^{(m_1+m_l)/2}.
\end{eqnarray}
By \eqref{eq:repair1}, the term in (\ref{eq:fix4}) is upper bounded by
\begin{eqnarray*}
&& \left(pq+(1-p)(1-q)+\sqrt{pq}\sqrt{(1-p)(1-q)}(e^{t_u-t^*}+e^{t^*-t_u})\right)^{\frac{m_1+m_l}{2}} \\
&& ~~ \leq \exp\left(-(1+o(1))\frac{m_1+m_l}{2}I^*\right) 
\leq \exp\left(-(1+o(1))\frac{n_1+n_l}{2}I^*\right).
\end{eqnarray*}
By \eqref{eq:repair2}, the term in (\ref{eq:fix3}) is upper bounded by
\begin{eqnarray*}
&&  \exp\left(-t_u{\rho_u}(m_l-m_1)\right)\left(qe^{t_u}+1-q\right)^{(m_l-m_1)/2}\left(pe^{-t_u}+1-p\right)^{(m_1-m_l)/2} \\
&=& \exp\left(\frac{m_1-m_l}{2}\left(\log\frac{pe^{-t_u}+1-p}{qe^{t_u}+1-q}-\log\frac{\wh{p}_ue^{-t_u}+1-\wh{p}_u}{\wh{q}_ue^{t_u}+1-\wh{q}_u}\right)\right) \\
&\leq& \exp\left(\frac{|m_1-m_l|}{2}\left(|e^{-t_u}-1||\wh{p}_u-p|+|e^{t_u}-1||\wh{q}_u-q|\right)\right) \\
&\leq& \exp\left(o\left(\frac{n}{k}\frac{(p-q)^2}{p}\right)\right) \\
&=& \exp\left(o(1)\frac{n_1+n_l}{2}I^*\right).
\end{eqnarray*}
Therefore, we can upper bound (\ref{eq:fix1}) as
\begin{equation}
\mathbb{E}\left\{e^{-t_u\rho_u(m_l-m_1)}(qe^{t_u}+1-q)^{m_l}(pe^{-{t_u}}+1-p)^{m_1}\indc{E_u}\right\} \leq \exp\left(-(1+o(1))\frac{n_1+n_l}{2}I^*\right).\label{eq:fix100}
\end{equation}
Now we provide an upper bound for (\ref{eq:fix2}). 
By \eqref{eq:repair2}, on $E_u$,
\begin{eqnarray*}
\frac{pe^{t_u}+1-p}{qe^{t_u}+1-q} = 1+\frac{(p-q)(e^{t_u}-1)}{qe^{t_u}+1-q} 
\leq 1+O\left(\frac{(p-q)^2}{p}\right) 
\leq \exp\left(O\left(\frac{(p-q)^2}{p}\right)\right),
\end{eqnarray*}
and
\begin{eqnarray*}
\frac{qe^{-t_u}+1-q}{pe^{-t_u}+1-p} = 1+\frac{(p-q)(1-e^{-t_u})}{pe^{-t_u}+1-p} 
\leq 1+O\left(\frac{(p-q)^2}{p}\right) 
\leq \exp\left(O\left(\frac{(p-q)^2}{p}\right)\right).
\end{eqnarray*}
% by using (\ref{eq:repair2}) on the event $E_u$.
Therefore,
\begin{equation}
\mathbb{E}\left\{\left(\frac{pe^{t_u}+1-p}{qe^{t_u}+1-q}\right)^{m_l-m_l'}\left(\frac{qe^{-t_u}+1-q}{pe^{-t_u}+1-p}\right)^{m_1-m_1'}\indc{E_u}\right\}\leq \exp\left(o(1)\frac{n_1+n_l}{2}I^*\right).\label{eq:fix200}
\end{equation}
By combining (\ref{eq:fix100}) and (\ref{eq:fix200}), we have
\begin{equation}
	\label{eq:pl-bound}
p_l\leq \exp\left(-(1+o(1))\frac{n_1+n_l}{2}I^*\right).
\end{equation}
Using (\ref{eq:pl-decomp}), this implies
$$\Prob\sth{\wh\sigma_u(u) \neq 1 ~\mbox{and}~ E_u} \leq (k-1)\exp\left(-(1+o(1))\min_{l\neq 1}\left(\frac{n_1+n_l}{2}\right)I^*\right),$$
and so
$$\Prob\sth{\wh\sigma_u(u) \neq 1} \leq (k-1)\exp\left(-(1+o(1))\min_{l\neq 1}\left(\frac{n_1+n_l}{2}\right)I^*\right)+Cn^{-(1+\delta)}.$$
When $k=2$, $\min_{l\neq 1}\left(\frac{n_1+n_l}{2}\right)=\frac{n}{2}$, and when $k\geq 3$, $\min_{l\neq 1}\left(\frac{n_1+n_l}{2}\right)\geq \frac{n}{\beta k}$.
Thus, the proof is complete.
\end{proof}

\begin{lemma}
	\label{lmm:refine-nb-2}
Let $\Theta$ be defined as in \eqref{eq:para-space} and $k\geq 2$.
Suppose as $n\to\infty$, $\frac{(a-b)^2}{ak}\to\infty$ and $a\asymp b$.
If there exists two sequences $\gamma = o\left(\frac{a-b}{ak}\right)$ and $\eta = o(1)$, constants $C,\delta > 0$ and permutations $\sth{\pi_u}_{u=1}^n\subset S_k$ such that \eqref{eq:refine-cond} holds.
% \begin{equation}
% 	\label{eq:refine-cond}
% \begin{aligned}
% \inf_{(B,\sigma)\in \Theta}
% \min_{u\in [n]}
% 	\Prob\sth{
% 	% \min_{\pi\in S_k}
% 	\loss_0(\pi_u(\sigma_u), \sigma_u^0) \leq \gamma,~
% 	|\wh{a}_u -a| \leq \eta(a-b),~
% 	|\wh{b}_u - b| \leq \eta (a-b)}
% 	% ~~~  & \\
% \geq 1 - Cn^{-(1+\delta)}. &
% \end{aligned}
% \end{equation}
Then for 
$\wh{\sigma}_u(u)$ defined as in \eqref{eq:refine-nb} with $\rho = \rho_u$ in \eqref{eq:lambda-set}, 
the conclusions of \prettyref{lmm:refine-nb} continue to hold.
% there is a sequence $\eta' = o(1)$ such that
% such that for $k=2$,
% $$
% \sup_{(B,\sigma)\in \Theta}\max_{u\in [n]}\Prob\sth{\wh\sigma_u(u) \neq \pi_u(\sigma(u))} \leq
% (k-1)\exp\sth{-(1-\eta')\frac{nI^*}{2}}
% + C n^{-(1+\delta)},
% $$
% and for $k\geq 3$,
% $$
% \sup_{(B,\sigma)\in \Theta}\max_{u\in [n]}\Prob\sth{\wh\sigma_u(u) \neq \pi_u(\sigma(u))} \leq
% (k-1)\exp\sth{-(1-\eta')\frac{nI^*}{\beta k}}
% + C n^{-(1+\delta)}.
% $$
\end{lemma}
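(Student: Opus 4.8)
The plan is to mirror the proof of \prettyref{lmm:refine-nb} essentially line by line, and to absorb the inhomogeneity of $B$ into the same ``error factors'' that already appear there. Fix $(B,\sigma)\in\Theta$ and $u\in[n]$, let $E_u$ denote the event in \eqref{eq:refine-cond}, and (after relabelling by $\pi_u$) assume $\sigma_u(u)=1$; on $E_u$ one has $\wh a_u>\wh b_u$ and hence $t_u>0$. Exactly as in \eqref{eq:pl-decomp}, I would write $\Prob\sth{\wh\sigma_u(u)\neq 1}\leq \sum_{l\neq 1}p_l+\Prob(E_u^c)$ with $\Prob(E_u^c)\leq Cn^{-(1+\delta)}$, and for each $l$ apply the Chernoff bound with parameter $t_u$, expressing the moment generating function as a product over the vertices $v$ of single-Bernoulli factors $B_{1,\sigma_u(v)}e^{\pm t_u}+1-B_{1,\sigma_u(v)}$ --- i.e.\ the analogues of \eqref{eq:fix1}--\eqref{eq:fix2}, but with $p,q$ replaced by the (possibly unequal) true entries of row $1$ of $B$.

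The key device is that $x\mapsto 1+x(e^{t_u}-1)$ is increasing and $x\mapsto 1-x(1-e^{-t_u})$ is decreasing on $[0,1]$. Combined with $\frac an\leq B_{ii}\leq\frac{\alpha a}{n}$ and $B_{ij}\leq\frac bn$ for $i\neq j$, this lets me replace each factor by its worst case: the $m_l'$ correctly placed voters for the wrong label $l$ (true between-edges, mean $\leq\frac bn$) by $(qe^{t_u}+1-q)^{m_l'}$, the $m_1'$ correctly placed voters for the true label (within-edges, mean $\geq\frac an$) by $(pe^{-t_u}+1-p)^{m_1'}$, and the at most $\gamma n$ \emph{mis}placed voters (true mean at most $\frac{\alpha a}{n}$) by $(\alpha p\,e^{t_u}+1-\alpha p)$ or by $1$, whichever is relevant. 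Rewriting $(qe^{t_u}+1-q)^{m_l'}=(qe^{t_u}+1-q)^{m_l}\cdot(qe^{t_u}+1-q)^{m_l'-m_l}$ and similarly for the other product, the surviving ``main term'' is exactly the one handled in \eqref{eq:fix3}--\eqref{eq:fix200}: it equals, up to lower order, $\exp(-t_u\rho_u(n_l-n_1))(qe^{t_u}+1-q)^{n_l}(pe^{-t_u}+1-p)^{n_1}$, which on $E_u$ --- where $\wh a_u\approx a$, $\wh b_u\approx b$, hence $t_u\approx t^*$ and $\rho_u$ matches the Chernoff threshold $\tfrac1{2t_u}\log\tfrac{qe^{t_u}+1-q}{pe^{-t_u}+1-p}$ up to a term controlled by $\eta$, as in \eqref{eq:repair1}--\eqref{eq:repair2} --- is $\exp(-(1+o(1))\tfrac{n_1+n_l}{2}I^*)$. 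Here one also uses that $I(\Bern(B_{11}),\Bern(B_{1l}))\geq I^*$: the R\'enyi-$\tfrac12$ divergence is monotone in the spread of its arguments and $B_{11}\geq\frac an>\frac bn\geq B_{1l}$, so substituting $\frac an,\frac bn$ only weakens the exponent. Because $\rho_u$ is (essentially) the Chernoff threshold, the $(n_l-n_1)$-proportional part of the main term --- precisely what encodes the community-size imbalance --- cancels to leading order, so the exponent is governed by the symmetric part $\tfrac{n_1+n_l}{2}$; with $n_i\geq\frac n{\beta k}-1$ this gives $\min_{l\neq 1}\tfrac{n_1+n_l}{2}=\tfrac n2$ for $k=2$ and $\geq\tfrac n{\beta k}$ for $k\geq 3$, matching \prettyref{lmm:refine-nb}.

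The remaining step is to verify that every error factor is $\exp(o(\tfrac nk I^*))=\exp(o(\tfrac{(a-b)^2}{ak}))$, and this is where the hypothesis $\gamma=o(\tfrac{a-b}{ak})$ --- stronger than the $\gamma=o(\tfrac1k)$ that suffices in \prettyref{lmm:refine-nb} --- enters; I expect this to be the main obstacle. Since $a\asymp b$, on $E_u$ one has $e^{t_u}-1\asymp 1-e^{-t_u}\asymp\tfrac{a-b}{a}$ when $a/b\to 1$ (and $\asymp 1$ otherwise), and in either case $(\alpha p-q)(e^{t_u}-1)=O(\tfrac{a-b}{n})$. Thus a single misplaced voter for label $l$ contributes a factor $\tfrac{\alpha p\,e^{t_u}+1-\alpha p}{qe^{t_u}+1-q}=1+O((\alpha p-q)(e^{t_u}-1))=1+O(\tfrac{a-b}{n})$, and over the at most $\gamma n$ such voters this is $\exp(O(\gamma(a-b)))$, which is $o(\tfrac{(a-b)^2}{ak})$ precisely when $\gamma=o(\tfrac{a-b}{ak})$. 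The other error factors --- the correction $(pe^{-t_u}+1-p)^{-(n_1-m_1')}$ (per unit $1+O(\tfrac{a-b}{n})$), the penalty-exponent discrepancy $\exp\big(t_u\rho_u|(m_l-m_1)-(n_l-n_1)|\big)$ with $|(m_l-m_1)-(n_l-n_1)|\leq 2\gamma n$ and $t_u\rho_u\asymp\tfrac{a-b}{n}$, and the cross-ratio term \eqref{eq:fix2} (per unit $1+O(\tfrac{(a-b)^2}{an})$) --- are all dominated by $\exp(O(\gamma(a-b)))$ and hence negligible under the same condition. The one genuinely new estimate, and the crux of the argument, is the misplaced-voter bound: a wrongly clustered within-community vertex can inject into a competing community's tally an edge of mean as large as $\tfrac{\alpha a}{n}$ --- a surplus of order $\tfrac an$ over the between-community baseline $\tfrac bn$, rather than the $\tfrac{a-b}{n}$ surplus one gets in $\Theta_0$ --- which is exactly why the admissible initialization error must shrink by the extra factor $\tfrac{a-b}{a}$. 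Summing the bound $p_l\leq\exp(-(1+o(1))\min_{l\neq1}\tfrac{n_1+n_l}{2}I^*)$ over $l\neq 1$ and adding $\Prob(E_u^c)$ then yields the two displayed bounds, completing the proof.
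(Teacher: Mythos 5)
Your proposal is correct and follows essentially the same route as the paper: bound each $p_l$ by a Chernoff argument in which correctly placed voters are dominated by $\mathrm{Bern}(q)$ and $\mathrm{Bern}(p)$ and the at most $\gamma n$ misplaced voters by $\mathrm{Bern}(\alpha p)$, so that the main term coincides with the one from \prettyref{lmm:refine-nb} and the correction factors are each $\exp\pth{O(p-q)}$ per unit, totalling $\exp\pth{O(\gamma(a-b))}=\exp\pth{o\pth{\frac{n_1+n_l}{2}I^*}}$ precisely under $\gamma=o\pth{\frac{a-b}{ak}}$. You also correctly identify the misplaced-voter surplus of order $\frac{a}{n}$ (rather than $\frac{a-b}{n}$) as the reason the stronger condition on $\gamma$ is needed, which is exactly the point of the paper's bound on the term in \eqref{eq:fix12}.
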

\begin{proof}
The proof is similar to that of \prettyref{lmm:refine-nb} and we use the same notation as there. 
First, we give a bound for $p_l$ defined in (\ref{eq:pl-decomp}). Let $X_j\sim \mbox{Bern}(q)$, $Y_j\sim \mbox{Bern}(p)$ and $Z_j\sim \mbox{Bern}(\alpha p)$, $j\geq 1$, be mutually independent. Then, a stochastic order argument gives
\begin{eqnarray}
\nonumber p_l &\leq& \mathbb{E}\left[\mathbb{P}\left\{\sum_{j=1}^{m_l'}X_j+\sum_{j=1}^{m_l-m_l'}Z_j-\sum_{j=1}^{m_1'}Y_j\geq\rho(m_l-m_1)\text{ and }E_u\Big|A_{-u}\right\}\right] \\
\label{eq:fix11} &\leq& \mathbb{E}\left\{\exp\left(-t_u\rho_u(m_l-m_1)\right)(qe^{t_u}+1-q)^{m_l}(pe^{-t_u}+1-p)^{m_1}\indc{E_u}\right\} \\
\label{eq:fix12} && \times\mathbb{E}\left\{\left(\frac{1}{qe^{t_u}+1-q}\right)^{m_l-m_l'}\left(\frac{1}{pe^{-{t_u}}+1-p}\right)^{m_1-m_1'}(\alpha pe^{t_u}+1-\alpha p)^{m_l-m_l'}\indc{E_u}\right\}.
\end{eqnarray}
Note that the term in (\ref{eq:fix11}) is the same as that in (\ref{eq:fix1}), and thus it can be upper bounded by (\ref{eq:fix100}) as before. 
To bound for (\ref{eq:fix12}), observe that by \eqref{eq:repair2},
$$\frac{1}{qe^{t_u}+1-q}\leq \exp\left(q|e^{t_u}-1|\right)\leq \exp\left(O(p-q)\right),$$
$$\frac{1}{pe^{-t_u}+1-p}\leq \exp\left(Cp|e^{-t_u}-1|\right)\leq  \exp\left(O(p-q)\right)$$
and
$$\alpha pe^{t_u}+1-\alpha p\leq \exp\left(\alpha p|e^{{t_u}}-1|\right)\leq  \exp\left(O(p-q)\right).$$
Thus, under the assumption $\gamma=o\left(\frac{p-q}{kp}\right)$, the term (\ref{eq:fix12}) is bounded by $\exp\left(o(1)\frac{n_1+n_l}{2}I^*\right)$. The remaining proof is the same as that of \prettyref{lmm:refine-nb}.
\end{proof}

Finally, we need a lemma to justify the consensus step in Algorithm \ref{algo:refine}.

\begin{lemma}
	\label{lmm:consensus}
For any community assignments $\sigma$ and $\sigma'$: $[n]\to [k]$, such that for some constant $C \geq 1$
\[
\min_{l\in [k]}|\sth{u:\sigma(u)=l}|,\, \min_{l\in [k]}|\sth{u:\sigma'(u)=l}| \geq \frac{ n}{ C k}, 
\quad
\mbox{and}
\quad 
\min_{\pi\in S_k}\loss_0(\sigma, \pi(\sigma')) < \frac{1}{C k}.
\]
% \nb{the constant $2$ is nominal here!}
Define map $\xi:[k]\to [k]$ as 
% follows. For any $i\in [k]$, let
\begin{align}
	\label{eq:label-change}
\xi(i) = \argmax_{l}\left|\{u:\sigma(u) = l\} \cap \{u: \sigma'(u) = i\} \right|,\quad \forall i\in [k].
\end{align}
Then $\xi\in S_k$ and $\loss_0(\sigma, \xi(\sigma')) = \min_{\pi\in S_k}\loss_0(\sigma, \pi(\sigma'))$.
\end{lemma}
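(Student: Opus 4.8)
The plan is to show that under the stated hypotheses the map $\xi$ defined in \eqref{eq:label-change} is exactly the permutation that realizes the minimum in $\min_{\pi\in S_k}\loss_0(\sigma,\pi(\sigma'))$. Let $\pi_0\in S_k$ be an optimal permutation, i.e. $\loss_0(\sigma,\pi_0(\sigma')) = \min_{\pi\in S_k}\loss_0(\sigma,\pi(\sigma'))< \frac{1}{Ck}$. After relabeling $\sigma'$ by $\pi_0$ (which changes neither the hypotheses nor the definition of $\xi$ up to composing with $\pi_0$), we may assume $\pi_0=\mathrm{Id}$, so that $\loss_0(\sigma,\sigma')<\frac{1}{Ck}$, meaning fewer than $\frac{n}{Ck}$ nodes have $\sigma(u)\neq\sigma'(u)$. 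The goal then reduces to proving $\xi=\mathrm{Id}$, which immediately gives $\loss_0(\sigma,\xi(\sigma')) = \loss_0(\sigma,\sigma') = \min_\pi \loss_0(\sigma,\pi(\sigma'))$ and in particular $\xi\in S_k$.

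First I would fix $i\in[k]$ and estimate the overlap counts $N_{l,i}:=|\{u:\sigma(u)=l\}\cap\{u:\sigma'(u)=i\}|$. The total number of ``mismatched'' nodes, $\sum_{l\neq l'} N_{l',l} = n\,\loss_0(\sigma,\sigma') < \frac{n}{Ck}$, so in particular $\sum_{l\neq i}N_{l,i} < \frac{n}{Ck}$ and $\sum_{l\neq i}N_{i,l}<\frac{n}{Ck}$. On the other hand, $|\{u:\sigma'(u)=i\}| = N_{i,i} + \sum_{l\neq i}N_{l,i} \geq \frac{n}{Ck}$ by the size lower bound on the communities of $\sigma'$, hence $N_{i,i} \geq \frac{n}{Ck} - \sum_{l\neq i}N_{l,i} > 0$; more precisely, since $\sum_{l\ne i} N_{l,i} < \frac{n}{Ck}$ we also get, using $|\{u:\sigma(u)=i\}|\ge \frac{n}{Ck}$ analogously, a lower bound on $N_{i,i}$. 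For $l\neq i$ we have $N_{l,i}\leq \sum_{l'\neq i}N_{l',i} < \frac{n}{Ck}$. The clean way to finish is to show $N_{i,i} > N_{l,i}$ for every $l\neq i$: note $N_{i,i} = |\{u:\sigma(u)=i\}| - \sum_{l'\neq i}N_{i,l'} \geq \frac{n}{Ck} - n\loss_0(\sigma,\sigma')$, while for $l\ne i$, $N_{l,i}\le \sum_{l'\ne i}N_{l',i}\le n\loss_0(\sigma,\sigma')$. Since $n\loss_0(\sigma,\sigma') < \frac{n}{Ck}$, we get $N_{i,i} > \frac{n}{Ck} - \frac{n}{Ck} $... which is only $>0$, not obviously $>N_{l,i}$. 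So I would instead argue via a counting/pigeonhole total: because $\frac{2n}{Ck}$ exceeds the total mismatch $n\loss_0 < \frac{n}{Ck}$... let me restate: $N_{i,i}\ge \frac{n}{Ck} - n\loss_0(\sigma,\sigma')$ and $N_{l,i}\le n\loss_0(\sigma,\sigma')$; if $n\loss_0(\sigma,\sigma') < \frac{n}{2Ck}$ then $N_{i,i} > \frac{n}{2Ck} > N_{l,i}$. Thus strengthening the hypothesis reading $\frac{1}{Ck}$ to an effective $\frac{1}{2Ck}$ — or absorbing the factor $2$ into $C$ — yields $\xi(i)=i$ as the unique argmax. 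Since the paper states the hypothesis with a single constant $C\ge1$ and only requires existence of \emph{some} such $C$, I would simply carry the factor $2$ through, or note the argmax in \eqref{eq:label-change} can be made well-defined by an arbitrary tie-break and the inequalities above show the maximizer is $i$.

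Doing this for every $i\in[k]$ shows $\xi$ fixes every label, so $\xi = \mathrm{Id}\in S_k$, and undoing the reduction (replacing $\mathrm{Id}$ by $\pi_0$) gives $\xi = \pi_0$, hence $\loss_0(\sigma,\xi(\sigma')) = \min_{\pi\in S_k}\loss_0(\sigma,\pi(\sigma'))$. The main obstacle, and the only place requiring care, is the bookkeeping in the previous paragraph: one must make sure the gap between $N_{i,i}$ and $\max_{l\neq i}N_{l,i}$ is genuinely positive, which forces tracking the constants carefully and possibly adjusting the $\frac{1}{Ck}$ threshold by an absolute factor — harmless here because $C$ is only required to exist. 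Everything else is elementary counting with the two size hypotheses and the mismatch bound.
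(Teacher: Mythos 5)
Your overall strategy (reduce to the case $\pi_0=\mathrm{Id}$ and show $\xi$ fixes every label by comparing overlap counts) is genuinely different from the paper's and is workable, but as written there is a real gap, which you yourself flag: the bounds $N_{i,i}\ge \frac{n}{Ck}-n\loss_0(\sigma,\sigma')$ and $N_{l,i}\le n\loss_0(\sigma,\sigma')$ only give $N_{i,i}>N_{l,i}$ when $\loss_0(\sigma,\sigma')<\frac{1}{2Ck}$, and the proposed repair --- ``absorb the factor $2$ into $C$'' --- does not prove the lemma as stated. The constant $C$ appears in both hypotheses and pulls in opposite directions: enlarging $C$ weakens the community-size lower bound at the same time as it strengthens the loss bound, so no rescaling of the single constant turns your weaker statement into the stated one. (In the application inside the proof of Theorem \ref{thm:upper} the loss is $o(1/k)$, so your weaker version would still suffice there, but it is not the lemma as stated.) The gap is in fact avoidable inside your own framework: the two error sets you bound separately, namely $\sth{u:\sigma(u)=i,\,\sigma'(u)\ne i}$ and $\sth{u:\sigma(u)=l,\,\sigma'(u)=i}$ for $l\ne i$, are \emph{disjoint} subsets of the mismatch set, so $\sum_{l'\ne i}N_{i,l'}+N_{l,i}\le n\loss_0(\sigma,\sigma')$, whence $N_{i,i}-N_{l,i}\ge |\sigma^{-1}(i)|-n\loss_0(\sigma,\sigma')\ge \frac{n}{Ck}-n\loss_0(\sigma,\sigma')>0$ with no factor of $2$ lost.

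The paper sidesteps the per-label comparison entirely. Since $n\loss_0(\sigma,\xi'(\sigma'))=n-\sum_{i}\left|\{u:\sigma(u)=\xi'(i)\}\cap\{u:\sigma'(u)=i\}\right|$ and each summand depends only on $\xi'(i)$, the coordinatewise argmax $\xi$ minimizes $\loss_0(\sigma,\xi'(\sigma'))$ over \emph{all} maps $\xi':[k]\to[k]$, hence $\loss_0(\sigma,\xi(\sigma'))\le\min_{\pi\in S_k}\loss_0(\sigma,\pi(\sigma'))<\frac{1}{Ck}$; if $\xi$ were not a bijection, some label $l_0$ would be missed by its image and the loss would be at least $|\sigma^{-1}(l_0)|/n\ge\frac{1}{Ck}$, a contradiction; and a permutation whose loss is at most the minimum over permutations attains that minimum. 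This uses only the size bound on the communities of $\sigma$ and avoids all the bookkeeping that caused your factor of $2$. Note also that your route aims at the stronger conclusion $\xi=\pi_0$ for a chosen optimal $\pi_0$, whereas the lemma only asserts that $\xi$ attains the minimum; the paper's argument proves exactly what is claimed.
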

\begin{proof}
By the definition in \eqref{eq:label-change}, we obtain
\begin{align*}
\xi = \argmin_{\xi':[k]\to [k]} \loss_0(\sigma, \xi'(\sigma')),\quad
\mbox{and}\quad
\loss_0(\sigma, \xi(\sigma')) \leq \min_{\pi\in S_k}\loss_0(\sigma, \pi(\sigma')) < \frac{1}{C k}.
\end{align*}
Thus, what remains to be shown is that $\xi\in S_k$, \ie, $\xi(l_1)\neq \xi(l_2)$ for any $l_1\neq l_2$. 
To this end, note that if for some $l_1 \neq l_2$, $\xi(l_1) = \xi(l_2)$, then 
there would exist some $l_0\in [k]$ such that for any $l\in [k]$, $\xi(l)\neq l_0$, and so
\begin{align*}
\loss_0(\sigma, \xi(\sigma')) \geq 
\frac{1}{n}\sum_{u:\sigma(u) = l_0}\indc{\sigma(u)\neq \xi(\sigma'(u))} = \frac{|\sth{u:\sigma(u)=l_0}|}{n} \geq \frac{1}{C k}.
\end{align*}
This is in contradiction to the second last display, and hence $\xi\in S_k$. 
This completes the proof.
\end{proof}

% \nb{===== start here =====}

% \subsubsection{Proof of \prettyref{thm:upper}} 
% \nb{to be revised... start here!}

\begin{proof}[Proof of \prettyref{thm:upper}]
Let $\Theta = \Theta(n,k,a,b,\lambda,\beta;\alpha)$, and fix any $(B,\sigma)\in \Theta$.
For any $u\in [n]$,
by \prettyref{cond:init} and the fact that $\sigma^0_u$ and $\wh\sigma_u$ differ only at the community assignment of $u$, for $\gamma' = \gamma + 1/n$,
 % also satisfies \eqref{eq:gamma-cond-1} and \eqref{eq:gamma-cond-2}, 
% for any $i\in [n]$, 
there exists some $\pi_u\in S_k$ such that 
\begin{align}
	\label{eq:thm1-pf-cond1}
\Prob\sth{\loss_0(\sigma, \pi_u^{-1}(\wh\sigma_u)) \leq \gamma'_n} 
\geq 1-C_0 n^{-(1+\delta)}.
\end{align}
Without loss of generality, we assume $\pi_1 = \mathrm{Id}$ is the identity map.
Now for any fixed $u\in \sth{2,\dots, n}$, define map $\xi_u:[k]\to [k]$ as in \eqref{eq:label-change} with $\sigma$ and $\sigma'$ replaced by $\wh\sigma_1$ and $\wh\sigma_u$. 
Then by definition 
\begin{align}
\wh\sigma(u) = \xi_u(\wh\sigma_u(u)).	
\end{align}
In addition, \eqref{eq:thm1-pf-cond1} implies 
with probability at least $1 - Cn^{-(1+\delta)}$, we have 
\begin{align*}
\ell_0(\sigma, \wh\sigma_1)\leq \gamma'
\quad \mbox{and}\quad 
\ell_0(\sigma, \pi_u^{-1}(\wh{\sigma}_u)) \leq \gamma'.
\end{align*}
So the triangle inequality implies 
$\ell_0(\wh\sigma_1, \pi_u^{-1}(\wh{\sigma}_u)) \leq 2\gamma'$ and hence the condition of \prettyref{lmm:consensus} is satisfied.
Thus, \prettyref{lmm:consensus} implies
\begin{align}
	\label{eq:consensus-bd}
\Prob\sth{\xi_u = \pi_u^{-1}} \geq 1 - Cn^{-(1+\delta)}.
\end{align}

When $k\geq 3$,
\prettyref{lmm:ab-est}, \eqref{eq:gamma-cond-1} and \eqref{eq:gamma-cond-2} imply that the condition of \prettyref{lmm:refine-nb-2} is satisfied, which in turn implies that for a sequence $\eta' = o(1)$, 
\begin{align*}
\Prob\sth{\wh\sigma(u) \neq \sigma(u)} 
& = \Prob\sth{\xi_u(\wh\sigma_u(u))\neq \sigma(u)} \\
& \leq \Prob\sth{\xi_u(\wh\sigma_u(u))\neq \sigma(u),\, \xi_u = \pi_u^{-1}} 
+ \Prob\sth{\xi_u \neq \pi_u^{-1}} \\
& \leq \Prob\sth{\wh\sigma_u(u)\neq \pi_u(\sigma(u))} + \Prob\sth{\xi_u \neq \pi_u^{-1}} \\
& \leq Cn^{-(1+\delta)} + (k-1) \exp\sth{-(1-\eta')\frac{nI^*}{\beta k}}  .
\end{align*}
Set
\begin{align}
\eta = \eta' + \beta\sqrt{\frac{ k}{n I^*}} = o(1)
\end{align}
where the last inequality holds since $\frac{\,nI^* }{k} \asymp \frac{(a-b)^2}{ak} \to\infty$.
Thus, Markov's inequality leads to
\begin{align*}
& 
\hskip -4em \Prob\sth{\loss_0(\sigma, \wh\sigma) > 
(k-1) \exp\sth{-(1-\eta)\frac{nI^*}{\beta k}}
% \sum_{i=1}^{k-1} \exp\sth{-(1-\eta)n_{(i)} I^*}
}  
\\
& \leq \frac{1}{ (k-1) \exp\sth{-(1-\eta)\frac{nI^*}{\beta k}} } \frac{1}{n}\sum_{u=1}^n \Prob\sth{\wh\sigma(u) \neq \sigma(u)} \\
& \leq \exp\sth{-(\eta-\eta') \frac{nI^*}{\beta k}} 
+ \frac{Cn^{-(1+\delta)}}{(k-1) \exp\sth{-(1-\eta)\frac{nI^*}{\beta k}} } \\
& \leq \exp\sth{-\sqrt{n I^*\over k}} 
+ \frac{Cn^{-(1+\delta)}}{(k-1) \exp\sth{-(1-\eta)\frac{nI^*}{\beta k}} }.
\end{align*}
If $(k-1) \exp\sth{-(1-\eta)\frac{nI^*}{\beta k}} \geq n^{-(1+\delta/2)}$, then
\begin{align*}
\Prob\sth{\loss_0(\sigma, \wh\sigma) > 
(k-1) \exp\sth{-(1-\eta)\frac{nI^*}{\beta k}}
} 
\leq \exp\sth{-\sqrt{n I^* \over k}} + Cn^{-\delta/2} = o(1).
\end{align*}
If $(k-1) \exp\sth{-(1-\eta)\frac{nI^*}{\beta k}} < n^{-(1+\delta/2)}$, then
\begin{align*}
\Prob\sth{\loss_0(\sigma, \wh\sigma) > 
(k-1) \exp\sth{-(1-\eta)\frac{nI^*}{\beta k}}
} = 
\Prob\sth{\loss_0(\sigma, \wh\sigma) > 0}
\leq \sum_{u=1}^n \Prob\sth{\wh\sigma(u)\neq \sigma(u)} ~~~ & \\
\leq n(k-1) \exp\sth{-(1-\eta)\frac{nI^*}{\beta k}} + Cn^{-\delta} 
\leq Cn^{-\delta/2} = o(1). & 
\end{align*}
Here, the second last inequality holds since $\eta > \eta'$ and so 
$ (k-1)\exp\sth{-(1-\eta')nI^*/(\beta k)} < (k-1) \exp\sth{-(1-\eta)nI^*/(\beta k)} < n^{-(1+\delta/2)}$.
We complete the proof for the case of $\Theta(n,k,a,b,\lambda,\beta;\alpha)$ and $k\geq  3$ by noting that $(k-1) \exp\sth{-(1-\eta)\frac{nI^*}{\beta k}}=\exp\left\{-(1-\eta'')\frac{nI^*}{\beta k}\right\}$ for another sequence $\eta''=o(1)$ under the assumption $\frac{(a-b)^2}{ak\log k}\rightarrow\infty$ and no constant or sequence in the foregoing arguments involves $B,\sigma$ or $u$.
When $\Theta = \Theta(n,k,a,b,\lambda,\beta;\alpha)$ and $k = 2$, the foregoing arguments continue to hold with $\beta$ and $k$ replaced with $1$ and $2$ respectively.

When $\Theta = \Theta_0(n,k,a,b,\beta)$, we can run the foregoing arguments with \prettyref{lmm:refine-nb-2} replaced by \prettyref{lmm:refine-nb} to reach the conclusion in \eqref{eq:upper-bound}, which does not require condition \eqref{eq:gamma-cond-2}.
This completes the proof.
\end{proof}

\subsection{Proof of Theorem \ref{thm:ini1}}\label{sec:hehehe}

The following lemma is critical to establish the result of  Theorem \ref{thm:ini1}. Its proof is given in the appendix. Let us introduce the notation $O(k_1,k_2)=\{V\in\mathbb{R}^{k_1\times k_2}: V^TV=I_{k_2}\}$ for $k_1\geq k_2$.

\begin{lemma} \label{lem:sp1}
Consider a symmetric adjacency matrix $A\in\{0,1\}^{n\times n}$ and a symmetric matrix $P\in[0,1]^{n\times n}$ satisfying $A_{uu}=0$ for all $u\in[n]$ and $A_{uv}\sim \text{Bernoulli}(P_{uv})$ independently for all $u>v$.
For any $C'>0$, there exists some $C>0$ such that
$$\opnorm{T_{\tau}(A)-P}\leq C\sqrt{np_{\max}+1},$$
with probability at least $1-n^{-C'}$ uniformly over $\tau\in [C_1(np_{\max}+1),C_2(np_{\max}+1)]$ for some sufficiently large constants $C_1,C_2$, where $p_{\max}=\max_{u\geq v}P_{uv}$.
\end{lemma}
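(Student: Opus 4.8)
The plan is to set $d := np_{\max}+1$ and to bound $\opnorm{T_\tau(A) - \bar P}$, where $\bar P := \Expect A = P - \diag(P)$; this suffices since $\opnorm{\diag(P)} \le p_{\max} \le \sqrt d$ (as $p_{\max}^2 \le p_{\max} \le np_{\max} \le d$). Fix a $\tfrac14$-net $\calN$ of the unit sphere in $\reals^n$ with $|\calN| \le 9^n$, so that $\opnorm{M} \le 2\max_{x,y\in\calN}|x^\top M y|$; it then suffices to bound $|x^\top(T_\tau(A)-\bar P)y|$ by $C\sqrt d$ for each fixed pair $x,y\in\calN$ on an event whose complement has probability small enough to absorb a union bound over $\calN\times\calN$ and over $\tau$.

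The central object is a \emph{regularity event} $\calG$, depending on $A$ only (hence covering all $\tau$ simultaneously): for every $S,T\subseteq[n]$, writing $e_A(S,T) = \sum_{u\in S,v\in T}A_{uv}$ and $\mu(S,T) = \tfrac{d}{n}|S||T|$, either $e_A(S,T) \le \kappa\,\mu(S,T)$, or $e_A(S,T)\log\frac{e_A(S,T)}{\mu(S,T)} \le \kappa\,(|S|\vee|T|)\log\frac{en}{|S|\vee|T|}$. With $\kappa = \kappa(C')$ taken large, a Chernoff bound applied to each of the $\le 4^n$ choices of $(S,T)$, split according to whether $\mu(S,T)$ is large or small, gives $\Prob(\calG) \ge 1 - n^{-C'}$; this is a routine but slightly delicate binomial-tail estimate.

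Now fix $x,y\in\calN$ and split index pairs into light $\calL = \{(u,v): |x_uy_v| \le \sqrt d/n\}$ and heavy $\calH = \{(u,v): |x_uy_v| > \sqrt d/n\}$. On $\calH$ the $\bar P$-contribution is deterministically at most $p_{\max}\sum_{\calH}|x_uy_v| \le p_{\max}\,\tfrac{n}{\sqrt d}\sum_{u,v}(x_uy_v)^2 = \tfrac{np_{\max}}{\sqrt d} \le \sqrt d$ (using $|x_uy_v| \le \tfrac{n}{\sqrt d}(x_uy_v)^2$ on $\calH$), while $\sum_{\calH}T_\tau(A)_{uv}x_uy_v$ is controlled by the key \emph{deterministic} lemma (Feige--Ofek style): on $\calG$, any symmetric $M$ with $0\le M_{uv}\le A_{uv}$ and all row sums at most $\tau\le C_2 d$ satisfies $\sum_{\calH}M_{uv}|x_uy_v| \le C\sqrt d$ for all unit $x,y$. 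The matrix $M = T_\tau(A)$ meets these hypotheses, since its $u$-th row sum equals $d_u < \tau$ when $u$ is not trimmed and $0$ otherwise — this is exactly where trimming is indispensable, as no such row-sum bound holds for $A$ itself in the sparse regime. I expect this lemma to be the main obstacle: it is proved by the Feige--Ofek dyadic argument — partition $[n]$ into scale classes $X_s = \{u:|x_u|\in(2^{-s-1},2^{-s}]\}$ and $Y_t$ likewise, bound the block sum by $2^{-s-t}\min\{e_M(X_s,Y_t),\,\tau\min(|X_s|,|Y_t|)\}$, invoke $\calG$ on $e_A(X_s,Y_t) \ge e_M(X_s,Y_t)$, and sum over the $O(\log^2 n)$ relevant scale pairs, sorting them into those handled by $e\le\kappa\mu$, by the logarithmic bound, or by the bounded-row-sum bound.

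For the light part, write $x^\top(T_\tau(A)-\bar P)y\big|_{\calL} = \sum_{\calL}(A_{uv}-\bar P_{uv})x_uy_v - \sum_{\calL}(A_{uv}-T_\tau(A)_{uv})x_uy_v$. Grouping the first sum over $u<v$ makes it a sum of independent mean-zero terms with increments at most $2\sqrt d/n$ and total variance $O(p_{\max}) = O(d/n)$, so Bernstein's inequality bounds it by $C\sqrt d$ with probability $1 - e^{-\Omega(Cn)}$, which survives the net union bound once $C$ is large. For the second sum, let $R_\tau = \{u:d_u\ge\tau\}$ be the trimmed set; then $0 \le (A - T_\tau(A))_{uv} = A_{uv}\indc{u\in R_\tau \text{ or } v\in R_\tau}$, so on $\calL$ this sum is at most $\tfrac{\sqrt d}{n}\cdot 2\,e_A(R_\tau,[n])$. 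Since every $u\in R_\tau$ has $d_u\ge\tau\ge C_1 d$ we get $e_A(R_\tau,[n]) = \sum_{u\in R_\tau}d_u \ge C_1 d\,|R_\tau|$; applying $\calG$ to $(S,T) = (R_\tau,[n])$ then forces, for $C_1 \ge e^{2\kappa}$, either $|R_\tau| = 0$ or $e_A(R_\tau,[n]) \le \kappa n/\log C_1$, whence this term is at most $2\kappa\sqrt d/\log C_1 \le \sqrt d$ — uniformly over $\tau\in[C_1 d,C_2 d]$, since only $d_u\ge\tau\ge C_1 d$ was used. Collecting the four pieces yields $|x^\top(T_\tau(A)-\bar P)y| \le C\sqrt d$ on $\calG$ intersected with the light-part Bernstein events; a union bound over $\calN\times\calN$ (absorbed into $C$) together with the uniformity of $\calG$ and all other estimates in $\tau$ completes the proof. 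The only genuinely intricate pieces are the dyadic discrepancy lemma and the two-regime union bound establishing $\calG$; the rest is bookkeeping.
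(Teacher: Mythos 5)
Your proof is correct, and it rests on the same core technology as the paper's --- a light/heavy split of index pairs at threshold $\asymp\sqrt{d}/n$ with $d=np_{\max}+1$, Bernstein plus an $\epsilon$-net for the light pairs, and a Feige--Ofek-type discrepancy property (the paper's Lemma~\ref{lem:vu}, your event $\calG$) for the heavy pairs --- but the decomposition is organized genuinely differently. The paper writes $T_{\tau}(A)-P=(A_{JJ}-P_{JJ})+(T_{\tau}(P)-P)$ with $J=\{u:d_u\le\tau\}$, handles the first term by the light/heavy argument (Lemma~\ref{lem:core}), and controls the second term by first proving a separate combinatorial estimate, Lemma~\ref{lem:remain}, that at most $n/\tau$ nodes are trimmed, so that $\opnorm{T_{\tau}(P)-P}\le\fnorm{T_{\tau}(P)-P}\lesssim np_{\max}/\sqrt{\tau}$. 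You instead compare $T_{\tau}(A)$ directly with $\Expect A$ and fold the trimming correction $A-T_{\tau}(A)$ into the light part, where you bound its contribution by $\tfrac{\sqrt d}{n}\cdot 2e_A(R_\tau,[n])$ and then invoke the discrepancy event on the pair $(R_\tau,[n])$ to force either $R_\tau=\varnothing$ or $e_A(R_\tau,[n])\lesssim n$; this eliminates the need for anything like Lemma~\ref{lem:remain} and reuses the event $\calG$ that you already need for the heavy pairs. The trade-off is that you must prove the deterministic dyadic lemma for general symmetric $M$ with $0\le M_{uv}\le A_{uv}$ and row sums at most $\tau$ (a mild but valid strengthening of the cited Lemma~\ref{lem:vu}, applied to $M=T_\tau(A)$), and you must establish $\calG$ yourself rather than citing \cite{chin15} and \cite{lei14}; both steps are standard and your sketches of them are at the same level of detail as the paper's own treatment. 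Both arguments deliver uniformity over $\tau\in[C_1d,C_2d]$ for the same reason: every probabilistic event invoked depends on $A$ alone, not on $\tau$.
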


\begin{lemma}\label{lem:eigen}
For $P = (P_{uv}) = (B_{\sigma(u)\sigma(v)})$, we have SVD $P=U\Lambda U^T$, where
$$U=Z\Delta^{-1}W,$$
with $\Delta=\text{diag}(\sqrt{n_1},...,\sqrt{n_k})$, $Z\in\{0,1\}^{n\times k}$ is a matrix with exactly one nonzero entry  in each row at $(i,\sigma(i))$ taking value $1$ and $W\in O(k,k)$.
\end{lemma}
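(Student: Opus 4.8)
The plan is to write the decomposition down explicitly from the block structure of $P$ and then verify orthonormality by a direct computation. First I would record that, with $Z$ the membership matrix in the statement ($Z_{ui}=\indc{\sigma(u)=i}$ for $u\in[n]$, $i\in[k]$), one has $P=ZBZ^T$, since $(ZBZ^T)_{uv}=\sum_{i,j\in[k]}Z_{ui}B_{ij}Z_{vj}=B_{\sigma(u)\sigma(v)}=P_{uv}$. Next I would compute $(Z^TZ)_{ij}=\sum_{u\in[n]}Z_{ui}Z_{uj}=n_i\indc{i=j}$, i.e.\ $Z^TZ=\Delta^2$ with $\Delta=\diag(\sqrt{n_1},\dots,\sqrt{n_k})$. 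Because every community is nonempty on the parameter spaces \eqref{eq:para-space0} and \eqref{eq:para-space} (each $n_i\geq n/(\beta k)-1\geq 1$ for $n$ large), $\Delta$ is invertible, and then $Z\Delta^{-1}\in O(n,k)$ since $(Z\Delta^{-1})^T(Z\Delta^{-1})=\Delta^{-1}(Z^TZ)\Delta^{-1}=I_k$.

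Then I would diagonalize the symmetric $k\times k$ matrix $\Delta B\Delta$ via the spectral theorem, $\Delta B\Delta=W\Lambda W^T$ with $W\in O(k,k)$ and $\Lambda\in\reals^{k\times k}$ diagonal. Substituting,
\[
P=ZBZ^T=(Z\Delta^{-1})(\Delta B\Delta)(Z\Delta^{-1})^T=(Z\Delta^{-1}W)\,\Lambda\,(Z\Delta^{-1}W)^T=U\Lambda U^T,
\]
where $U:=Z\Delta^{-1}W$. Finally $U^TU=W^T(Z\Delta^{-1})^T(Z\Delta^{-1})W=W^TW=I_k$, so $U\in O(n,k)$ and $U\Lambda U^T$ is an eigendecomposition of $P$ (in particular $\rank(P)\leq k$); flipping the sign of those columns of $U$ that meet negative entries of $\Lambda$, and replacing $\Lambda$ by $|\Lambda|$, turns it into the reduced SVD without altering the advertised form, as $W$ stays orthogonal.

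I do not expect a genuine obstacle here: the statement is essentially a one-line linear-algebra identity once $P=ZBZ^T$ and $Z^TZ=\Delta^2$ are in hand. The only points worth a sentence of care are that $\Delta$ is invertible (nonempty communities, which is forced by the parameter space) and that ``SVD'' is used here for the symmetric eigendecomposition, so $\Lambda$ is diagonal but a priori indefinite; the sign convention is immaterial for the downstream use, where the only feature exploited is that rows of $U$ indexed by a common community coincide — which is immediate from $U=Z\Delta^{-1}W$.
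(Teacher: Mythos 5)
Your proposal is correct and follows essentially the same route as the paper: write $P=ZBZ^T=(Z\Delta^{-1})(\Delta B\Delta)(Z\Delta^{-1})^T$, note $Z\Delta^{-1}\in O(n,k)$, and diagonalize $\Delta B\Delta=W\Lambda W^T$ to obtain $U=Z\Delta^{-1}W$. The extra verifications you include (that $Z^TZ=\Delta^2$, that $\Delta$ is invertible, and the sign convention relating the eigendecomposition to the SVD) are sound and only make explicit what the paper leaves implicit.
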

\begin{proof}
Note that 
$$P=ZBZ^T=Z\Delta^{-1}\Delta B\Delta(Z\Delta^{-1})^T,$$
and observe that $Z\Delta^{-1}\in O(n,k)$. Apply SVD to the matrix $\Delta B\Delta^T=W\Lambda W^T$ for some $W\in O(k,k)$, and then we have $P=U\Lambda U^T$ with $U=Z\Delta^{-1}W\in O(k,k)$.
\end{proof}

\begin{proof}[Proof of Theorem \ref{thm:ini1}]
Under the current assumption, $\mathbb{E}\tau\in[C_1'a,C_2'a]$ for some large $C_1'$ and $C_2'$.
Using Bernstein's inequality, we have $\tau\in[C_1a,C_2a]$ for some large $C_1$ and $C_2$ with probability at least $1-e^{-C'n}$.
When (\ref{eq:assume}) holds, by Lemma \ref{lem:sp1}, we deduce that the $k\Th$ eigenvalue of $T_{\tau}(A)$ is lower bounded by $c_1\lambda_k$ with probability at least $1-n^{-C'}$ for some small constant $c_1\in (0,1)$.
By Davis--Kahan's sin-theta theorem \citep{davis70}, we have $\fnorm{\wh{U}-UW_1}\leq C\frac{\sqrt{k}}{\lambda_k}\opnorm{T_{\tau}(A)-P}$ for some $W_1\in O(k,k)$ and some constant $C>0$. Applying Lemma \ref{lem:eigen}, we have
\begin{equation}
\fnorm{\wh{U}-V}\leq C\frac{\sqrt{k}}{\lambda_k}\opnorm{T_{\tau}(A)-P},\label{eq:useDK}
\end{equation}
where $V=Z\Delta^{-1}W_2\in O(n,k)$ for some $W_2\in O(k,k)$. 
Combining (\ref{eq:useDK}), Lemma \ref{lem:sp1} and the conclusion $\tau\in[C_1a,C_2a]$, we have
\begin{equation}
\fnorm{\wh{U}-V}\leq\frac{C\sqrt{k}\sqrt{a}}{\lambda_k},\label{eq:UVclose}
\end{equation}
with probability at least $1-n^{-C'}$. The definition of $V$ implies that
\begin{equation}
\norm{V_{u*}-V_{v*}} = \sqrt{\frac{1}{n_u}+\frac{1}{n_v}}\indc{\sigma(u)\neq \sigma(v)}. \label{eq:Vchar}
\end{equation}
In other words, define $Q=\Delta^{-1}W_2\in\mathbb{R}^{k\times k}$ and we have $V_{u*}=Q_{\sigma(u)*}$ for each $u\in[n]$.
Hence, for $\sigma(u)\neq \sigma(v)$, $\norm{Q_{\sigma(u)*}-Q_{\sigma(v)*}}=\norm{V_{u*}-V_{v*}}\geq \sqrt{\frac{2k}{\beta n}}$. Recall the definition $r=\mu\sqrt{\frac{k}{n}}$ in Algorithm \ref{algo:greedy}.
Define the sets
$$
T_i=\left\{u\in \sigma^{-1}(i): \norm{\wh{U}_{u*}-Q_{i*}}<\frac{r}{2}\right\},\quad i\in[k].
$$
\begin{figure}[bt]
\centering
\includegraphics[width=0.8\textwidth]{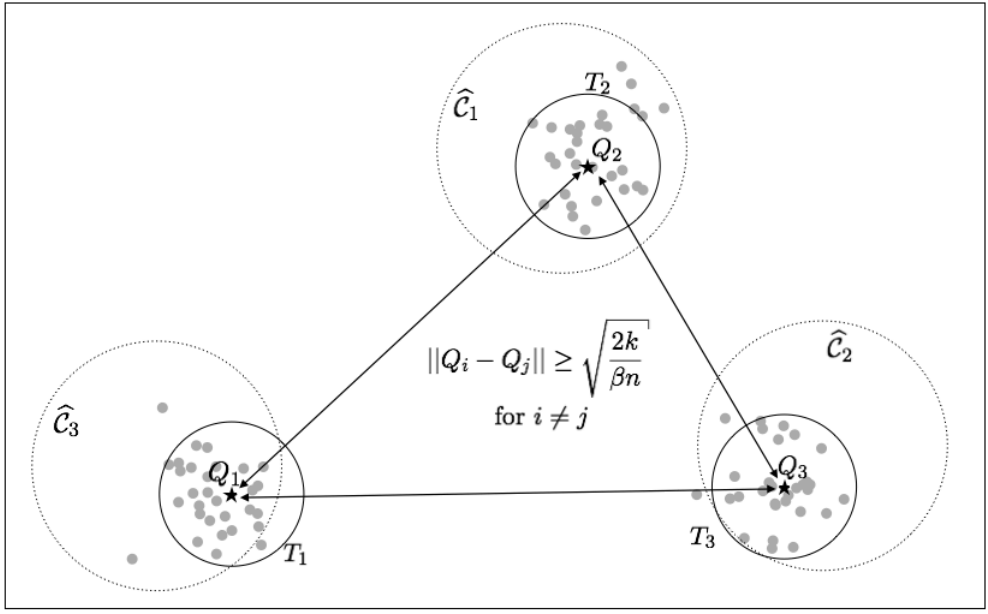}
\caption{\footnotesize{The schematic plot for the proof of Theorem \ref{thm:ini1}. The balls $\{T_i\}_{i\in[k]}$ are centered at $\{Q_i\}_{i\in[k]}$, and the centers are at least $\sqrt{\frac{2k}{\beta n}}$ away from each other. The balls $\{\wh{\mathcal{C}}_i\}_{i\in[k]}$ intersect with large proportions of $\{T_i\}_{i\in[k]}$, and their subscripts do not need to match due to some permutation.}\label{fig:kballs}}
\end{figure}
By definition, $T_i\cap T_j=\varnothing$ when $i\neq j$, and we also have
\begin{equation}
\cup_{i\in[k]}T_i=\left\{u\in[n]: \norm{\wh{U}_{u*}-V_{u*}}<\frac{r}{2}\right\}.\label{eq:defTa}
\end{equation}
Therefore,
$$
\left|\left(\cup_{i\in[k]}T_i\right)^c\right|\frac{r^2}{4} \leq \sum_{u\in[n]}\norm{\wh{U}_{u*}-V_{u*}}^2 \leq \frac{C^2ka}{\lambda_k^2},
$$
where the last inequality is by (\ref{eq:UVclose}). After rearrangement, we have
\begin{equation}
\left|\left(\cup_{i\in[k]}T_i\right)^c\right| \leq \frac{4C^2 na}{\mu^2\lambda_k^2}.\label{eq:set4}
\end{equation}
In other words, most nodes are close to the centers and are in the set (\ref{eq:defTa}). 
Note that the sets $\{T_i\}_{i\in[k]}$ are disjoint. Suppose there is some $i\in[k]$ such that $|T_i|<|\sigma^{-1}(i)|-\left|\left(\cup_{i\in[k]}T_i\right)^c\right|$, we have $\left|\cup_{i\in[k]}T_i\right|=\sum_{i\in[k]}|T_i|<n-\left|\left(\cup_{i\in[k]}T_i\right)^c\right|=\left|\cup_{i\in[k]}T_i\right|$, which is impossible. Thus,
the cardinality of $T_i$ for each $i\in[k]$ is lower bounded as
\begin{equation}
|T_i| \geq |\sigma^{-1}(i)|-\left|\left(\cup_{i\in[k]}T_i\right)^c\right| \geq \frac{n}{\beta k}-\frac{4C^2 na}{\mu^2\lambda_k^2}>\frac{n}{2\beta k},\label{eq:lowerT}
\end{equation}
where the last inequality above is by the assumption (\ref{eq:assume}). Intuitively speaking, except for a negligible proportion, most data points in $\{\wh{U}_{u*}\}_{u\in[n]}$ are very close to the population centers $\{Q_{i*}\}_{i\in[k]}$. Since the centers are at least $\sqrt{\frac{2k}{\beta n}}$ away from each other and $\{T_i\}_{i\in[k]}$ and $\{\wh{C}_i\}_{i\in[k]}$ are both defined through the critical radius $r=\mu\sqrt{\frac{k}{n}}$ for a small $\mu$, each $\wh{C}_i$ should intersect with only one $T_i$ (see Figure \ref{fig:kballs}). We claim that there exists some permutation $\pi$ of the set $[k]$, such that for $\wh{C}_i$ defined in Algorithm \ref{algo:greedy},
\begin{equation}
\wh{\mathcal{C}}_i\cap T_{\pi(i)}\neq \varnothing\quad\text{and}\quad|\wh{\mathcal{C}}_i|\geq|T_{\pi(i)}|\quad\text{for each }i\in[k]. \label{eq:maincon}
\end{equation}
In what follows, we first establish the result of Theorem \ref{thm:ini1} by assuming (\ref{eq:maincon}). The proof of (\ref{eq:maincon}) will be given in the end.
Note that for any $i\neq j$, $T_{\pi(i)}\cap \wh{\mathcal{C}}_j=\varnothing$, which is deduced from the fact that $\wh{\mathcal{C}}_j\cap T_{\pi(j)}\neq\varnothing$ and the definition of $\wh{\mathcal{C}}_j$. Therefore, $T_{\pi(i)}\subset \wh{\mathcal{C}}_j^c$ for all $j\neq i$. Combining with the fact that $T_{\pi(i)}\cap \wh{\mathcal{C}}_i^c\subset \wh{\mathcal{C}}_i^c$, we get $T_{\pi(i)}\cap \wh{\mathcal{C}}_i^c\subset (\cup_{i\in[k]}\wh{\mathcal{C}}_i)^c$. Therefore,
\begin{equation}
\cup_{i\in[k]}\left(T_{\pi(i)}\cap \wh{\mathcal{C}}_i^c\right)\subset \left(\cup_{i\in[k]}\wh{\mathcal{C}}_i\right)^c.\label{eq:set}
\end{equation}
Since $T_i\cap T_j=\varnothing$ for $i\neq j$, we deduce from (\ref{eq:set}) that
\begin{equation}
\sum_{i\in[k]}\left|T_{\pi(i)}\cap \wh{\mathcal{C}}_i^c\right|\leq \left| \left(\cup_{i\in[k]}\wh{\mathcal{C}}_i\right)^c\right|.\label{eq:set2}
\end{equation}
By definition, $\wh{\mathcal{C}}_i\cap \wh{\mathcal{C}}_j=\varnothing$ for $i\neq j$, we deduce from (\ref{eq:maincon}) that
\begin{equation}
\left| \left(\cup_{i\in[k]}\wh{\mathcal{C}}_i\right)^c\right|=n-\sum_{i\in[k]}|\wh{\mathcal{C}}_i|\leq n-\sum_{i\in[k]}|T_i|=\left| \left(\cup_{i\in[k]}T_i\right)^c\right|. \label{eq:set3}
\end{equation}
Combining (\ref{eq:set2}), (\ref{eq:set3}) and (\ref{eq:set4}), we have
\begin{equation}
\sum_{i\in[k]}\left|T_{\pi(i)}\cap \wh{\mathcal{C}}_i^c\right|\leq \frac{4C^2 na}{\mu^2\lambda_k^2}. \label{eq:set5}
\end{equation}
Since for any $u\in\cup_{i\in[k]}(\wh{\mathcal{C}}_i\cap T_{\pi(i)})$, we have $\wh{\sigma}(u)=i$ when $\sigma(u)=\pi(i)$, the mis-classification rate is bounded as
\begin{eqnarray*}
\ell_0(\wh{\sigma},\pi^{-1}(\sigma)) &\leq& \frac{1}{n}\left|\left(\cup_{i\in[k]}(\wh{\mathcal{C}}_i\cap T_{\pi(i)})\right)^c\right| \\
&\leq& \frac{1}{n}\left(\left|\left(\cup_{i\in[k]}(\wh{\mathcal{C}}_i\cap T_{\pi(i)})\right)^c\cap\left(\cup_{i\in[k]}T_i\right)\right| + \left|\left(\cup_{i\in[k]}T_i\right)^c\right|\right) \\
&\leq& \frac{1}{n}\left(\sum_{i\in[k]}\left|T_{\pi(i)}\cap \wh{\mathcal{C}}_i^c\right| + \left|\left(\cup_{i\in[k]}T_i\right)^c\right|\right) \\
&\leq&\frac{8C^2 a}{\mu^2\lambda_k^2},
\end{eqnarray*}
where the last inequality is from (\ref{eq:set5}) and (\ref{eq:set4}). This proves the desired conclusion.

Finally, we are going to establish the claim (\ref{eq:maincon}) to close the proof. We use mathematical induction. For $i=1$, it is clear that $|\wh{\mathcal{C}}_1|\geq\max_{i\in[k]}|T_{i}|$ holds by the definition of $\wh{\mathcal{C}}_1$.
Suppose $\wh{\mathcal{C}}_1\cap T_{i}= \varnothing$ for all $i\in[k]$, and then we must have
$$\left|\left(\cup_{i\in[k]}T_i\right)^c\right|\geq |\wh{\mathcal{C}}_1|\geq\max_{i\in[k]}|T_{i}|\geq \frac{n}{2\beta k},$$
where the last inequality is by (\ref{eq:lowerT}). This contradicts (\ref{eq:set4}) under the assumption (\ref{eq:assume}). Therefore, 
there must be a $\pi(1)$ such that $\wh{\mathcal{C}}_1\cap T_{\pi(1)}\neq \varnothing$ and $|\wh{\mathcal{C}}_1|\geq|T_{\pi(1)}|$. Moreover, 
\begin{eqnarray*}
|\wh{\mathcal{C}}_1^c\cap T_{\pi(1)}| &=& |T_{\pi(1)}|-|T_{\pi(1)}\cap \wh{\mathcal{C}}_1| \\
&\leq&  |\wh{\mathcal{C}}_1|-|T_{\pi(1)}\cap \wh{\mathcal{C}}_1| \\
&=& |\wh{\mathcal{C}}_1\cap T_{\pi(1)}^c| \\
&\leq& \left|\left(\cup_{i\in[k]}T_i\right)^c\right|,
\end{eqnarray*}
where the last inequality is because $T_{\pi(1)}$ is the only set in $\{T_i\}_{i\in[k]}$ that intersects $\wh{\mathcal{C}}_1$ by the definitions. By (\ref{eq:set4}), we get
\begin{equation}
|\wh{\mathcal{C}}_i^c\cap T_{\pi(i)}| \leq \frac{4C^2 na}{\mu^2\lambda_k^2},\label{eq:almost}
\end{equation}
for $i=1$.

Now suppose (\ref{eq:maincon}) and (\ref{eq:almost}) are true for $i=1,...,l-1$. Because of the sizes of $\{\wh{\mathcal{C}}_i\}_{i\in[l-1]}$ and the fact that $\{T_i\}_{i\in[k]}$ are mutually exclusive, we have
$$\left(\cup_{i=1}^{l-1}\wh{\mathcal{C}}_i\right)\cap \left(\cup_{i\in[k]\backslash \cup_{i=1}^{l-1}\{\pi(i)\}}T_{i}\right)=\varnothing.$$
Therefore, for the set $S$ in the current step, $\cup_{i\in[k]\backslash \cup_{i=1}^{l-1}\{\pi(i)\}}T_{i}\subset S$. By the definition of $\wh{\mathcal{C}}_l$, we have $|\wh{\mathcal{C}}_l|\geq \max_{i\in[k]\backslash \cup_{i=1}^{l-1}\{\pi(i)\}}|T_{i}|\geq\frac{n}{2\beta k}$. Suppose $\wh{\mathcal{C}}_l\cap T_{\pi(i)}\neq \varnothing$ for some $i=1,...,l-1$. Then, this $T_{\pi(i)}$ is the only set in $\{T_i\}_{i\in[k]}$ that intersects $\wh{\mathcal{C}}_l$ by their definitions. This implies that
$$|\wh{\mathcal{C}}_l|\leq |\wh{\mathcal{C}}_l\cap T_{\pi(i)}|+\left|\left(\cup_{i\in[k]}T_i\right)^c\right|.$$
Since $\wh{\mathcal{C}}_l\cap \wh{\mathcal{C}}_{\pi(i)}=\varnothing$, $|\wh{\mathcal{C}}_l\cap T_{\pi(i)}|\leq |\wh{\mathcal{C}}_{i}^c\cap T_{\pi(i)}|$ is bounded by (\ref{eq:almost}). Together with (\ref{eq:set4}), we have
$$|\wh{\mathcal{C}}_l|\leq\frac{8C^2na}{\mu^2\lambda_k^2},$$
which contradicts $|\wh{\mathcal{C}}_l|\geq \frac{n}{2\beta k}$ under the assumption (\ref{eq:assume}). Therefore, we must have $\wh{\mathcal{C}}_l\cap T_{\pi(i)}= \varnothing$ for all $i=1,...,l-1$. Now suppose $\wh{\mathcal{C}}_l\cap T_{\pi(i)}= \varnothing$ for all $i\in[k]$, we must have
$$\left|\left(\cup_{i\in[k]}T_i\right)^c\right|\geq |\wh{\mathcal{C}}_l|\geq \frac{n}{2\beta k},$$
which contradicts (\ref{eq:set4}). Hence, $\wh{C}_l\cap T_{\pi(l)}\neq\varnothing$ for some $\pi(l)\in[k]\backslash \cup_{i=1}^{l-1}\{\pi(i)\}$, and (\ref{eq:maincon}) is established for $i=l$. Moreover, (\ref{eq:almost}) can also be established for $i=l$ by the same argument that is used to prove (\ref{eq:almost}) for $i=1$. The proof is complete.
\end{proof}

\subsection{Proof of Theorem \ref{thm:ini2}}\label{sec:xixixi}

Define $P_{\tau}=P+\frac{\tau}{n}\mathbf{1}\mathbf{1}^T$. The proof of the following lemma is given in the appendix.

\begin{lemma} \label{lem:sp2}
Consider a symmetric adjacency matrix $A\in\{0,1\}^{n\times n}$ and a symmetric matrix $P\in[0,1]^{n\times n}$ satisfying $A_{uu}=0$ for all $u\in[n]$ and $A_{uv}\sim \text{Bernoulli}(P_{uv})$ independently for all $u>v$.
For any $C'>0$, there exists some $C>0$ such that
$$\opnorm{L(A_{\tau})-L(P_{\tau})}\leq C\sqrt{\frac{\log(e(np_{\max}+1))}{np_{\max}+1}},$$
with probability at least $1-n^{-C'}$ uniformly over $\tau\in[C_1(np_{\max}+1),C_2(np_{\max}+1)]$ for some sufficiently large constants $C_1,C_2$, where $p_{\max}=\max_{u\geq v}P_{uv}$.
\end{lemma}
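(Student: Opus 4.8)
The plan is to reduce the bound on $\opnorm{L(A_\tau) - L(P_\tau)}$ to a bound on $\opnorm{A_\tau - P_\tau} = \opnorm{A - P}$ (which is essentially Lemma~\ref{lem:sp1}, since adding $\frac{\tau}{n}\ones\ones^T$ does not change the difference) together with control on the diagonal degree matrices. Write $d_u = \sum_v (A_\tau)_{uv}$ and $\bar d_u = \sum_v (P_\tau)_{uv}$ for the degrees of $A_\tau$ and $P_\tau$, and let $D = \diag(d_u)$, $\bar D = \diag(\bar d_u)$, so that $L(A_\tau) = D^{-1/2} A_\tau D^{-1/2}$ and $L(P_\tau) = \bar D^{-1/2} P_\tau \bar D^{-1/2}$. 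First I would observe that because $A_\tau = A + \frac{\tau}{n}\ones\ones^T$ with $\tau \asymp np_{\max}+1$, every vertex of $A_\tau$ has a deterministic ``floor'' degree of order $\tau$, so $\bar d_u \asymp np_{\max}+1$ uniformly in $u$; this removes the sparsity pathology that afflicts the unregularized Laplacian.

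The key step is the concentration of the regularized degrees: I would show that $\max_u |d_u - \bar d_u| \lesssim \sqrt{(np_{\max}+1)\log(e(np_{\max}+1))}$ with probability $1 - n^{-C'}$, via Bernstein's inequality applied to each $d_u - \bar d_u = \sum_{v\neq u}(A_{uv} - P_{uv})$ (a sum of centered independent Bernoullis with variance proxy $\lesssim np_{\max}+1$) followed by a union bound over $u\in[n]$; the logarithmic factor is exactly what the union bound over $n$ vertices costs when $np_{\max}+1$ can be as small as a constant, and it matches the $\log a$ in the statement since $p_{\max} \asymp a/n$. Combined with $\bar d_u \asymp np_{\max}+1$, this gives the multiplicative control $\max_u \left|\sqrt{\bar d_u / d_u} - 1\right| \lesssim \sqrt{\log(e(np_{\max}+1))/(np_{\max}+1)}$, i.e. $\opnorm{D^{-1/2}\bar D^{1/2} - I} \lesssim \sqrt{\log(e(np_{\max}+1))/(np_{\max}+1)}$.

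Then I would use the standard telescoping decomposition
\begin{align*}
L(A_\tau) - L(P_\tau) &= D^{-1/2}(A_\tau - P_\tau)D^{-1/2} + (D^{-1/2} - \bar D^{-1/2})P_\tau D^{-1/2} + \bar D^{-1/2}P_\tau(D^{-1/2} - \bar D^{-1/2}),
\end{align*}
and bound each term in operator norm: the first by $\opnorm{D^{-1/2}}^2 \opnorm{A_\tau - P_\tau} \lesssim \frac{1}{np_{\max}+1}\cdot\sqrt{np_{\max}+1}$ using Lemma~\ref{lem:sp1} (noting $\opnorm{A_\tau - P_\tau} = \opnorm{A - P}$ and that on a high-probability event $d_u \asymp np_{\max}+1$ so $\opnorm{D^{-1/2}} \lesssim (np_{\max}+1)^{-1/2}$); the second and third by writing $(D^{-1/2} - \bar D^{-1/2})P_\tau \bar D^{-1/2} = D^{-1/2}(\bar D^{1/2} - D^{1/2})\bar D^{-1/2} L(P_\tau)$ and using $\opnorm{L(P_\tau)} \leq 1$, $\opnorm{D^{-1/2}(\bar D^{1/2} - D^{1/2})\bar D^{-1/2}} = \opnorm{\bar D^{-1/2}(\bar D^{1/2}-D^{1/2})\cdot\text{(bounded)}}$, which by the degree concentration above is $\lesssim \sqrt{\log(e(np_{\max}+1))/(np_{\max}+1)}$. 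Summing the three bounds, the dominant term is $\sqrt{\log(e(np_{\max}+1))/(np_{\max}+1)}$, which is exactly the claimed rate. Taking a union bound of the $O(1)$-many high-probability events (degree concentration, Lemma~\ref{lem:sp1}, and $\tau \in [C_1(np_{\max}+1), C_2(np_{\max}+1)]$ if $\tau$ is itself random) yields the conclusion uniformly over the stated range of $\tau$.

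The main obstacle I anticipate is not any single estimate but getting the $\tau$-uniformity and the interplay of randomness cleanly: in the application $\tau = C_2 \bar d$ is itself a random quantity concentrating around $C_2' a$, so one must first condition on the event $\{\tau \in [C_1(np_{\max}+1), C_2(np_{\max}+1)]\}$ (or prove the bound for all deterministic $\tau$ in that window and then invoke the concentration of $\bar d$), and simultaneously ensure the degree-concentration event and the Lemma~\ref{lem:sp1} event hold with the $1 - n^{-C'}$ probability for an arbitrary $C'$ — which is why the Bernstein bounds must be carried out at scale $\log n$ rather than merely $\log(np_{\max}+1)$, a point that needs care when $np_{\max}+1 = O(1)$ but $n\to\infty$.
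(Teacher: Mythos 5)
Your telescoping decomposition of $L(A_\tau)-L(P_\tau)$ is the standard one and would work in the dense regime $np_{\max}\gtrsim \log n$, but the argument has two gaps that are fatal precisely in the sparse regime the lemma is designed for. First, the uniform degree concentration $\max_{u\in[n]}|d_u-\bar d_u|\lesssim\sqrt{(np_{\max}+1)\log(e(np_{\max}+1))}$ with probability $1-n^{-C'}$ is false when $np_{\max}+1=O(1)$ or more generally $np_{\max}=o(\log n)$: a union bound over $n$ vertices forces the deviation scale to be $\sqrt{(np_{\max}+1)\log n}+\log n$, not $\sqrt{(np_{\max}+1)\log(e(np_{\max}+1))}$ (you concede this in your last paragraph, but it contradicts the rate you use in the main argument). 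Running your scheme with the correct Bernstein scale yields only $\opnorm{L(A_\tau)-L(P_\tau)}\lesssim\sqrt{\log n/(np_{\max}+1)}$, which is exactly the weaker bound of earlier work that this lemma is explicitly claimed to improve upon. Second, you bound the first term by $\opnorm{D^{-1/2}}^2\opnorm{A_\tau-P_\tau}$ and cite Lemma \ref{lem:sp1} for $\opnorm{A_\tau-P_\tau}=\opnorm{A-P}\lesssim\sqrt{np_{\max}+1}$, but Lemma \ref{lem:sp1} bounds $\opnorm{T_\tau(A)-P}$, the \emph{trimmed} matrix; the untrimmed $\opnorm{A-P}$ is of order $\sqrt{\max_u d_u}\gg\sqrt{np_{\max}+1}$ in the sparse regime because of high-degree vertices, and the additive regularization $A_\tau=A+\frac{\tau}{n}\ones\ones^T$ does not remove them.

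The paper circumvents both problems with a restrict-and-patch structure that your proposal is missing. It first identifies a good subset $J=J_1\cap J_2$ with $n-|J|\leq n/d$ (where $d=e(np_{\max}+1)$): on $J_1$ the degrees are bounded so that the restricted operator-norm bound $\opnorm{(A-P)_{J_1J_1}}\leq C\sqrt{d}$ of Lemma \ref{lem:core} applies, and on $J_2$ (from Lemma \ref{lem:degreeset}) the degrees concentrate at the rate $\sqrt{d\log d}$ --- a statement that holds only for \emph{most} vertices, not all. Your telescoping argument is then carried out only on the block $(L(A_\tau)-L(P_\tau))_{JJ}$ (this is Lemma \ref{lem:Lcore}), and the contribution of the small exceptional set $J^c$ is controlled separately via the argument of Theorem 8.4 of \cite{le2015sparse}, contributing an extra $C(1/\sqrt{d}+\sqrt{\log d/\tau})$. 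Without this localization to a good vertex set, neither the degree concentration nor the operator-norm input holds at the rate your argument requires.
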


\begin{lemma}\label{lem:neweigen}
Consider $P = (P_{uv}) = (B_{\sigma(u)\sigma(v)})$.
Let the SVD of the matrix $L(P_{\tau})$ be $L(P_{\tau})=U\Sigma U^T$, with $U\in O(n,k)$ and $\Sigma=\text{diag}(\sigma_1,...,\sigma_k)$. For $V=UW$ with any $W\in O(r,r)$, we have $\norm{V_{u*}-V_{v*}}=\sqrt{\frac{1}{n_u}+\frac{1}{n_v}}$ when $\sigma(u)\neq \sigma(v)$ and $V_{u*}=V_{v*}$ when $\sigma(u)=\sigma(v)$. Moreover, $\sigma_k\geq\frac{\lambda_k}{2\tau}$ as long as $\tau\geq np_{\max}$.
\end{lemma}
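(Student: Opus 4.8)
The plan is to mimic the population computation of Lemma~\ref{lem:eigen}, exploiting the fact that regularizing $P$ and then normalizing preserves its exact rank-$k$ block structure. First I would compute the degrees: writing $P=ZBZ^T$ as in Lemma~\ref{lem:eigen}, the $u$-th row sum of $P_\tau=P+\tfrac{\tau}{n}\mathbf{1}\mathbf{1}^T$ equals $\sum_v\big(B_{\sigma(u)\sigma(v)}+\tfrac{\tau}{n}\big)=D_{\sigma(u)}+\tau$, where $D_i:=\sum_{j\in[k]} n_j B_{ij}$ depends on $u$ only through $\sigma(u)$. Hence $L(P_\tau)=Z\widetilde B Z^T$ with the symmetric $k\times k$ matrix $\widetilde B_{ij}=(D_i+\tau)^{-1/2}(D_j+\tau)^{-1/2}\big(B_{ij}+\tfrac{\tau}{n}\big)$. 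Setting $\Delta=\diag(\sqrt{n_1},\dots,\sqrt{n_k})$ and $\mathcal D=\diag(D_1+\tau,\dots,D_k+\tau)$, this factors as $L(P_\tau)=(Z\Delta^{-1})\big(\Delta\widetilde B\Delta\big)(Z\Delta^{-1})^T$ with $Z\Delta^{-1}\in O(n,k)$, and a one-line computation gives $\Delta\widetilde B\Delta=\mathcal D^{-1/2}\big[\Delta B\Delta+\tfrac{\tau}{n}(\Delta\mathbf{1})(\Delta\mathbf{1})^T\big]\mathcal D^{-1/2}$.

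Next I would diagonalize this $k\times k$ matrix. One may assume $\lambda_k=\lambda_k(P)>0$, since otherwise the eigenvalue bound is vacuous; then $\Delta B\Delta$ is positive definite with $\lambdamin(\Delta B\Delta)=\lambda_k$, so $\Delta\widetilde B\Delta$ is positive definite, say $\Delta\widetilde B\Delta=W\Sigma W^T$ with $W\in O(k,k)$ and $\Sigma=\diag(\sigma_1,\dots,\sigma_k)$. This exhibits the SVD $L(P_\tau)=U\Sigma U^T$ with $U=Z\Delta^{-1}W$. Then for any $W'\in O(k,k)$ the matrix $V=UW'=Z\Delta^{-1}(WW')$ has rows $V_{u*}=n_{\sigma(u)}^{-1/2}(WW')_{\sigma(u)*}$, because the $u$-th row of $Z$ is the $\sigma(u)$-th standard basis row vector. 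Since $WW'$ is orthogonal its rows are orthonormal, so the first claim follows at once: if $\sigma(u)=\sigma(v)$ then $V_{u*}=V_{v*}$, and if $\sigma(u)\neq\sigma(v)$ then $\norm{V_{u*}-V_{v*}}^2=\tfrac1{n_{\sigma(u)}}+\tfrac1{n_{\sigma(v)}}$ by orthogonality.

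For the bound on $\sigma_k$, I would use that $Z\Delta^{-1}$ has orthonormal columns, so the nonzero singular values of $L(P_\tau)$ coincide with those of $\Delta\widetilde B\Delta$, i.e.\ $\sigma_k=\lambdamin(\Delta\widetilde B\Delta)$. Since $\tfrac{\tau}{n}(\Delta\mathbf{1})(\Delta\mathbf{1})^T\succeq0$ we have $\Delta B\Delta+\tfrac{\tau}{n}(\Delta\mathbf{1})(\Delta\mathbf{1})^T\succeq\lambda_k I$, and congruence by the positive definite diagonal matrix $\mathcal D^{-1/2}$ divides the smallest eigenvalue by at most $\max_i\mathcal D_{ii}=\max_i(D_i+\tau)$, so $\sigma_k\geq \lambda_k/\max_i(D_i+\tau)$; finally $D_i=\sum_j n_jB_{ij}\leq n\,p_{\max}\leq\tau$ under the hypothesis $\tau\geq np_{\max}$, whence $\max_i(D_i+\tau)\leq2\tau$ and $\sigma_k\geq\lambda_k/(2\tau)$. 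I do not anticipate a genuine obstacle: the argument is the same rank-$k$ reduction as in Lemma~\ref{lem:eigen}, and the only point worth flagging is conceptual rather than computational — everything hinges on the population degree being \emph{constant within each community}, which is exactly what keeps $L(P_\tau)$ block-structured and reduces the spectral analysis to the $k\times k$ matrix $\Delta\widetilde B\Delta$, while the crude estimate $D_i\le np_{\max}$ together with $\tau\ge np_{\max}$ is what controls the cost of the degree normalization.
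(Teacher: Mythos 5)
Your proof is correct, and for the eigenvalue bound it is essentially the paper's argument: both rest on writing $L(P_\tau)$ as a congruence $\bar D_\tau^{-1/2}P_\tau\bar D_\tau^{-1/2}$, noting $\lambda_k(P_\tau)\ge\lambda_k$ because the regularization adds a PSD rank-one term, and observing that the diagonal scaling costs at most a factor $\max_u\bar d_u+\tau\le 2\tau$. The main difference is in the first claim: the paper simply cites Lemma~1 of \cite{joseph2013impact}, whereas you prove it from scratch by showing that the population degrees of $P_\tau$ are constant within each community, so that $L(P_\tau)=(Z\Delta^{-1})(\Delta\widetilde B\Delta)(Z\Delta^{-1})^T$ inherits the same $Z\Delta^{-1}W$ eigenvector structure as in Lemma~\ref{lem:eigen}; this makes the lemma self-contained and, as a side benefit, cleans up a small subtlety in the paper's eigenvalue step (applying the congruence bound to the possibly rank-deficient $n\times n$ matrix $P_\tau$ requires restricting to its column space, which your reduction to the full-rank $k\times k$ matrix $\Delta\widetilde B\Delta$ sidesteps, under your explicitly flagged and harmless assumption $\lambda_k>0$). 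All the individual computations check out: the factorization $\Delta\widetilde B\Delta=\mathcal D^{-1/2}[\Delta B\Delta+\tfrac{\tau}{n}(\Delta\mathbf 1)(\Delta\mathbf 1)^T]\mathcal D^{-1/2}$ uses that the two diagonal matrices commute, the rows of $WW'$ are orthonormal so the distance formula follows, and $D_i\le np_{\max}\le\tau$ gives the factor $2\tau$.
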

\begin{proof}
The first part is Lemma 1 in \cite{joseph2013impact}. Define $\bar{d}_v=\sum_{u\in[n]}P_{uv}$ and $\bar{D}_{\tau}=\text{diag}(\bar{d}_1+\tau,...,\bar{d}_n+\tau)$. Then, we have $L(P_{\tau})=\bar{D}_{\tau}^{-1/2}P_{\tau}\bar{D}_{\tau}^{-1/2}$. Note that $P_{\tau}$ has an SBM structure so that it has rank at most $k$, and the $k\Th$ eigenvalue of $P_{\tau}$ is lower bounded by $\lambda_k$. Thus, we have
$$\sigma_k\geq\frac{\lambda_k}{\max_{u\in [n]}\bar{d}_u+\tau}.$$
Observe that $\max_{u\in[n]}\bar{d}_u\leq np_{\max}\leq\tau$, and the proof is complete.
\end{proof}

\begin{proof}[Proof of Theorem \ref{thm:ini2}]
As is shown in the proof of Theorem \ref{thm:ini1}, $\tau\in[C_1a,C_2a]$ for some large $C_1,C_2$ with probability at least $1-e^{-C'n}$.
By Davis--Kahan's sin-theta theorem \citep{davis70}, we have $\fnorm{\wh{U}-UW}\leq C_1\frac{\sqrt{k}}{\sigma_k}\opnorm{L(A_{\tau})-L(P_{\tau})}$ for some $W\in O(r,r)$ and some constant $C_1>0$. Let $V=UW$ and apply Lemma \ref{lem:sp2} and Lemma \ref{lem:neweigen}, we have
\begin{equation}
\fnorm{\wh{U}-V}\leq \frac{C\sqrt{k}\sqrt{a\log a}}{\lambda_k}, \label{eq:usenew}
\end{equation}
with probability at least $1-n^{-C'}$. Note that by Lemma \ref{lem:neweigen}, $V$ satisfies (\ref{eq:Vchar}). Replace (\ref{eq:UVclose}) by (\ref{eq:usenew}), and follow the remaining proof of Theorem \ref{thm:ini1}, the proof is complete.
\end{proof}

\begin{small}

\bibliographystyle{plainnat}
\bibliography{network}
\end{small}

%!TEX root = SBM_algo.tex

\newpage
\appendix

\begin{center}
{\Large Supplement to ``Achieving Optimal Misclassification Proportion in Stochastic Block Model''}\\
~\\
By Chao Gao$^1$, Zongming Ma$^2$, Anderson Y.~Zhang$^1$ and Harrison H.~Zhou$^1$\\
~\\
$^1$Yale University and $^2$University of Pennsylvania
\end{center}

\section{A simplified version of Algorithm \ref{algo:refine}}

\begin{algorithm}[!htb]
	\SetAlgoLined
\caption{A simplified refinement scheme for community detection 
\label{algo:simple}}
% {\bf Input:} 
\KwIn{
Adjacency matrix $A\in \sth{0,1}^{n\times n}$,\\
~~~~~~~~~~~~number of communities $k$,\\
~~~~~~~~~~~~initial community detection method $\sigma^0$.
% number of communities $k$, tuning parameters $t$ and $\lambda$.
}

% {\bf Output:} 
\KwOut{
Community assignment $\wh\sigma$.
}

% {\bf Procedure:}
% \smallskip
% {\bf Coloring:}\\
% \nl For each edge in $A$, we independently label it red or blue with equal probability $\frac{1}{2}$.\\
% Denote $A^r$ the adjacency matrix resulting from all the red edges\;

\smallskip
{\bf Initialization:}\\
\nl Apply $\sigma^0$ on $A$ to obtain $\sigma^0(u)$ for all $u\in[n]$\;
\nl Define $\wt{\mathcal{C}}_i = \sth{v: \sigma^0(v) = i}$ for all $i\in [k]$;
let $\wt\calE_i$ be the set of edges within $\wt{\mathcal{C}}_i$, and $\wt\calE_{ij}$ the set of edges between $\wt{\mathcal{C}}_i$ and $\wt{\mathcal{C}}_j$ when $i\neq j$\;
\nl Define
% For $g(x) = \frac{2x}{1+x}$, define
$$
\wh{B}_{ii} = \frac{|\wt\calE_i|}{\frac{1}{2}|\wt\calC_i|(|\wt\calC_i|-1) },
% g\pth{\frac{\wt{B}_i^u}{\frac{1}{2}|\wt{\mathcal{C}}_i^u|(|\wt{\mathcal{C}}_i^u|-1) - \wt{R}_i^u}}, 
\quad 
\wh{B}_{ij} = 
\frac{|\wt\calE_{ij}|}{|\wt\calC_i||\wt\calC_j|},
% g\pth{\frac{\wt{B}_{ij}^u}{|\wt{\mathcal{C}}_i^u||\wt{\mathcal{C}}_j^u| - \wt{R}_{ij}^u}},
\quad
\forall i\neq j\in [k],
$$
and let 
$$
\wh{a}= n\min_{i\in [k]} \wh{B}_{ii}
\quad \mbox{and} \quad
\wh{b} = n\max_{i\neq j\in [k]} \wh{B}_{ij}.
$$
{\bf Penalized neighbor voting:}\\
\nl For
$$
t = \frac{1}{2}\log
{\frac{\wh{a}(1 - \wh{b}/n)}{\wh{b}(1 - \wh{a}/n)}},
$$
define
$$
\rho = 
% \begin{cases}
-\frac{1}{2t}\log\pth{\frac{\frac{\wh{a}}{n}e^{-t}+1 -\frac{\wh{a}}{n}}{\frac{\wh{b}}{n}e^{t}+1 -\frac{\wh{b}}{n}}},
% & \text{if}~k = 2; \\
% -\frac{1}{t}\log\pth{\frac{\wh{a}}{n}e^{-t}+1 -\frac{\wh{a}}{n}},
% & \text{if}~ k\geq 3.
% \end{cases}
$$

\nl For each $u\in[n]$, set 
$$
\wh\sigma(u) = \argmax_{l\in [k]} \sum_{\sigma^0(v) = l} A_{uv} - \rho \sum_{v\in [n]} \indc{\sigma^0(v)= l}.
$$

\end{algorithm}

\section{Proofs of \prettyref{thm:upper-a-ll-b}}
\begin{proof}[Proof of \prettyref{thm:upper-a-ll-b}]
	% First of all, we note that condition \eqref{eq:gamma-cond-1} implies \eqref{eq:gamma-cond-2} when $\frac{a}{b}\to\infty$ as $n\to\infty$, and so we can deal with the two kinds of parameter spaces simultaneously.
Let us consider $\Theta = \Theta_0(n,k,a,b,\beta)$ and the case of $\Theta(n,k,a,b,\lambda,\beta;\alpha)$ is similar except that the condition \eqref{eq:gamma-cond-2} is needed to establish the counterpart of \prettyref{lmm:refine-nb-2}.
The proof essentially follows the same steps as those in the proof of \prettyref{thm:upper}.
First, we note that \prettyref{lmm:ab-est} continues to hold since it does not need the assumption of $a/b$ being bounded.
Thus, the first job is to establish the counterpart of \prettyref{lmm:refine-nb} with $\eta'$ replaced with $C_\eps \frac{2\epsilon_0}{3}$.
As before, let $p=a/n$ and $q=b/n$.
% \nb{change this later.}. 
% In what follows, let us consider the counterpart of \prettyref{lmm:refine-nb} for example and the argument leading to the counterpart of \prettyref{lmm:refine-nb-2}.

To this end, we first proceed in the same way to obtain \eqref{eq:pl-decomp} -- \eqref{eq:pl-bound}.
Without loss of generality, let us consider the case where $t^* > \log\frac{2}{\eps_0}$ and $t_u = \log\frac{2}{\eps_0}$ since otherwise we can essentially repeat the proof of \prettyref{thm:upper}.
Note that this implies $\frac{a}{b} > ( \frac{2}{\eps_0})^2$. 
In this case, 
with the new $t_u$ in \eqref{eq:t-set-cap}, we have on the event $E_u$, 
\begin{align*}
% \Expect e^{t_u X_1} \Expect e^{-t_u Y_1}
(qe^{t_u}+1-q)(pe^{-t_u}+1-p)
= e^{-I'}
\end{align*}
where
\begin{align}
I' & = -\log\pth{\pth{1-p}\pth{1-q} + pq + \pth{e^{t_u-t^*} + e^{t^*-t_u}}\sqrt{\pth{1-p}\pth{1-q} pq}} \nonumber \\
& \geq \pth{1 - C_\eps \frac{3 \epsilon_0}{5}} I^*.
\label{eq:refine-t2-new}
\end{align}
% where $C$ is a constant depending only on $\frac{a}{n}$.
To see this, we first note that for any $x,y\in (0,1)$ and sufficient small constant $c_0>0$, if $y\geq x\geq (1-c_0)y$ and $\frac{y-x}{1-y} \leq 1$, then
\begin{align*}
-\log(1-x) 
& = 
% -\log(1-y + y-x) = 
-\log(1-y) - \log\pth{1+\frac{y-x}{1-y}} 
% \\
% & 
\geq -\log(1-y) - 2\frac{y-x}{1-y} 
% \\
% & 
\geq -\pth{1 - C'_y c_0 }\log(1-y),
\end{align*}
% \nb{the second inequality requires $\frac{y-x}{1-y}$ to be sufficient small}
where $C'_y = \frac{2y}{-(1-y)\log(1-y)}$.
When $\frac{a}{b}>(\frac{2}{\eps_0})^2$ and $t_u = \log\frac{2}{\eps_0}$,
we have $I' = -\log(1-x)$ for 
\begin{align*}
x = p+q-2pq-(e^{t_u-t^*} + e^{t^*-t_u})\sqrt{\pth{1-p}\pth{1-q} pq} 
\geq p-2pq - q e^{t_u} - pe^{-t_u} 
\geq p(1-\eps_0-\frac{\eps_0^2}{2}),
\end{align*}
while $I^* = -\log(1-y)$ for
\begin{align*}
y = p+q-2pq-2\sqrt{\pth{1-p}\pth{1-q} pq}\leq p+q \leq p(1+(\frac{\eps_0}{2})^2).
\end{align*}
Thus, for any $\eps_0 \in (0,c_\eps)$, $1-\frac{\eps}{2} \geq y\geq x\geq (1 - 2\eps_0)y$ and $\frac{y-x}{1-y} \leq 1$, and we apply the inequality in the third last display to obtain \eqref{eq:refine-t2-new}.

% \nb{start here!}
% $x \geq \frac{a}{n}(1-\frac{\eps_0}{2}-\frac{\eps_0^2}{2})$ and $I^*\leq -\log(1-y)$ for $x \leq y\leq \frac{a}{n}(1+(\frac{\eps_0}{2})^2)$.
% Moreover, the constant $C_\eps$ in \eqref{eq:refine-t2-new} can be chosen as 
% $\frac{2-\epsilon}{-\frac{\eps}{2}\log\frac{\eps}{2}}$.
Thus, the term in \eqref{eq:fix4} is upper bounded by 
\begin{align*}
\exp\pth{-\big(1-C_\eps \frac{3\eps_0}{5} \big)\frac{n_1+n_l}{2}I^*}.
\end{align*}
On the other hand, since $|e^{-t_u}-1| \leq 1$, $|e^{t_u}-1|$ is bounded and $\frac{p-q}{p}\asymp 1$, the term in \eqref{eq:fix3} continues to be bounded by
\begin{align*}
\exp\pth{-o(1)\frac{n_1+n_l}{2}I^*}.
\end{align*}
Moreover, by the same argument as in \prettyref{lmm:refine-nb}, \eqref{eq:fix200} continues to hold. 
Thus, we can replace \eqref{eq:pl-bound} as
\begin{align*}
p_l\leq \exp\pth{-\big(1-C_\eps \frac{2\eps_0}{3} \big)\frac{n_1+n_l}{2}I^*},
\end{align*}
and so when $k\geq 3$,
% Next, since $|e^{-t_u}-1| \leq 1 \asymp \frac{a-b}{a}$, \eqref{eq:refine-t3} and \eqref{eq:refine-t4} continue to hold.
% Last but not least, we have on the event $E_u$,
% \begin{align}
% \pth{\Expect e^{t_u Z_1}}^{m_l-m_l'}
% \leq e^{(m_l-m_l')C\frac{a}{n}t_u} \leq e^{\eta'' a n_l /n} \leq e^{\eta'' n_l I^*},
% \label{eq:refine-t5-new}
% \end{align}
% where the first inequality is due to $\log(1+x) \leq x$ for all $x \geq 0$, the second inequality is due to the boundedness of $t_u$ and the fact that $|m_l-m_l''| = o(n_l)$ and the last inequality is due to $I^* \asymp \frac{a}{n}$ when $\frac{a}{b} > (\frac{2}{\eps_0})^2$.
% Combining \eqref{eq:refine-t2-new} -- \eqref{eq:refine-t5-new} with \eqref{eq:refine-t3} -- \eqref{eq:refine-t4}, we obtain that
\begin{align}
	\label{eq:refine-allb-1}
\Prob\sth{\wh\sigma_u(u) \neq \pi_u(\sigma(u))} \leq 
% \sum_{i=1}^{k-1} \exp\sth{-(1-\eta')n_{(i)} I^*} 
(k-1)\exp\sth{-\pth{1-C_\eps\frac{2\epsilon_0}{3}}\frac{n I^*}{\beta k}}
+ C n^{-(1+\delta)}
\end{align}
and when $k=2$, we can replace $\beta$ by $1$ in the last display.

When $k\geq 3$,
given the last display and \eqref{eq:consensus-bd}, we have 
\begin{align}
	\Prob\sth{\wh\sigma(u) \neq \sigma(u)} 
	& = \Prob\sth{\xi_u(\wh\sigma_u(u))\neq \sigma(u)} \nonumber \\
	& \leq \Prob\sth{\xi_u(\wh\sigma_u(u))\neq \sigma(u),\, \xi_u = \pi_u^{-1}} 
	+ \Prob\sth{\xi_u \neq \pi_u^{-1}} \nonumber \\
	& \leq \Prob\sth{\wh\sigma_u(u)\neq \pi_u(\sigma(u))} + \Prob\sth{\xi_u \neq \pi_u^{-1}} \nonumber \\
	& \leq Cn^{-(1+\delta)} + (k-1) \exp\sth{-\pth{1-C_\eps\frac{2\epsilon_0}{3}}\frac{nI^*}{\beta k}}  .
	\label{eq:refine-allb-2}
\end{align}
Thus, the assumption that $\frac{(a-b)^2}{ak\log k}\rightarrow\infty$ and Markov's inequality leads to
\begin{align}
& 
\hskip -4em \Prob\sth{\loss_0(\sigma, \wh\sigma) > 
\exp\sth{-(1-C_\eps\epsilon_0)\frac{nI^*}{\beta k}}
} 
\nonumber 
\\
& \leq \Prob\sth{\loss_0(\sigma, \wh\sigma) > 
(k-1) \exp\sth{-(1-C_\eps\frac{5\epsilon_0}{6})\frac{nI^*}{\beta k}}
} 
\nonumber 
\\
& \leq \frac{1}{ (k-1) \exp\sth{-(1-C_\eps\frac{5\epsilon_0}{6})\frac{nI^*}{\beta k}} } \frac{1}{n}\sum_{u=1}^n \Prob\sth{\wh\sigma(u) \neq \sigma(u)} 
\nonumber
\\
& \leq \exp\sth{- \frac{C_\eps \epsilon_0}{6} \frac{nI^*}{\beta k}} 
+ \frac{Cn^{-(1+\delta)}}{(k-1) \exp\sth{-(1-C_\eps\frac{5\epsilon_0}{6})\frac{nI^*}{\beta k}} }.
\label{eq:refine-allb-3}
%  \\
% & \leq \exp\sth{-\sqrt{n I^*\over k}} 
% + \frac{Cn^{-(1+\delta)}}{(k-1) \exp\sth{-(1-\epsilon_0)\frac{nI^*}{\beta k}} }.
\end{align}
If $(k-1) \exp\sth{-(1- C_\eps\frac{5\epsilon_0}{6})\frac{nI^*}{\beta k}} \geq n^{-(1+\delta/2)}$, then
\begin{align}
\Prob\sth{\loss_0(\sigma, \wh\sigma) > 
\exp\sth{-(1-C_\eps \epsilon_0)\frac{nI^*}{\beta k}}
} 
\leq \exp\sth{-\frac{C_\eps \epsilon_0}{6}{n I^* \over \beta k}} + Cn^{-\delta/2} = o(1).
\label{eq:refine-allb-4}
\end{align}
If $(k-1) \exp\sth{-(1-C_\eps\frac{5\epsilon_0}{6})\frac{nI^*}{\beta k}} < n^{-(1+\delta/2)}$, then 
\begin{align}
\Prob\sth{\loss_0(\sigma, \wh\sigma) > 
\exp\sth{-(1-C_\eps \epsilon_0)\frac{nI^*}{\beta k}}
} \leq
\Prob\sth{\loss_0(\sigma, \wh\sigma) > 0}
\leq \sum_{u=1}^n \Prob\sth{\wh\sigma(u)\neq \sigma(u)} ~~~ & 
\nonumber
\\
\leq n(k-1) \exp\sth{-(1-C_\eps \frac{2\epsilon_0}{3})\frac{nI^*}{\beta k}} + Cn^{-\delta} 
\leq Cn^{-\delta/2} = o(1). & 
\label{eq:refine-allb-5}
\end{align}
Here, the second last inequality holds since $ (k-1)\exp\sth{-(1-C_\eps \frac{2\epsilon_0}{3})\frac{nI^*}{\beta k}} < \exp\sth{-(1-C_\eps\frac{5\epsilon_0}{6})\frac{nI^*}{\beta k} } < n^{-(1+\delta/2)}$.
We complete the proof for the case of 
% $\Theta(n,k,a,b,\lambda,\beta;\alpha)$ and 
$k\geq  3$ by noting that no constant or sequence in the foregoing arguments involves $B,\sigma$ or $u$.
When 
% $\Theta = \Theta(n,k,a,b,\lambda,\beta;\alpha)$ and 
$k=2$, we run the foregoing arguments with $\beta$ replaced by $1$ to obtain the desired claim.
% to establish the counterpart of \prettyref{lmm:refine-nb-2} with $\eta'$ in \eqref{eq:refine-nb-2-bd} replaced by $\frac{2\epsilon_0}{3}$.
% Then we can derive \eqref{eq:refine-allb-1} -- \eqref{eq:refine-allb-5} with $\beta$ replaced by $2$ in the same way, which gives the desired claim for $k=2$.
% When $\Theta = \Theta(n,k,a,b,\beta)$, we can obtain the conclusion in \prettyref{lmm:ab-est} without assuming \eqref{eq:gamma-cond-2} and by repeating the above arguments, we obtain the desired claim.
\end{proof}

\section{Proofs of Theorems \ref{cor:upper1}, \ref{cor:upper2} and \ref{thm:upper-bound-ab-known}}

\begin{prop}\label{prop:eigen}
For SBM in the space $\Theta_0(n,k,a,b,\beta)$ satisfying $n\geq 2\beta k$, we have $\lambda_k\geq \frac{a-b}{\beta k}$.
\end{prop}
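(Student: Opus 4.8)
The plan is to use the factorization from Lemma~\ref{lem:eigen} to replace the $n\times n$ eigenvalue problem for $P$ by a $k\times k$ one, and then bound the smallest eigenvalue of the resulting $k\times k$ matrix by Weyl's inequality. Concretely, for $(B,\sigma)\in\Theta_0(n,k,a,b,\beta)$ write $P=ZBZ^T$ as in Lemma~\ref{lem:eigen}, with $\Delta=\diag(\sqrt{n_1},\dots,\sqrt{n_k})$ and $n_i=|\{u:\sigma(u)=i\}|$. Then $P=(Z\Delta^{-1})(\Delta B\Delta)(Z\Delta^{-1})^T$ with $Z\Delta^{-1}\in O(n,k)$, so the nonzero eigenvalues of $P$ are exactly the eigenvalues of the $k\times k$ symmetric matrix $\Delta B\Delta$; once we check that $\Delta B\Delta\succ 0$ it follows that $P\succeq 0$ and $\lambda_k=\lambda_k(P)=\lambda_{\min}(\Delta B\Delta)$.

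Next I would exploit the special structure of $B$ on $\Theta_0$. There $B=\frac{a-b}{n}I_k+\frac{b}{n}\mathbf 1_k\mathbf 1_k^T$, so
\[
\Delta B\Delta=\frac{a-b}{n}\,\Delta^2+\frac{b}{n}\,(\Delta\mathbf 1_k)(\Delta\mathbf 1_k)^T .
\]
The rank-one term is positive semidefinite and $\Delta^2=\diag(n_1,\dots,n_k)\succeq(\min_i n_i)I_k$ since $a>b$, so Weyl's monotonicity inequality for eigenvalues of sums of symmetric matrices gives
\[
\lambda_{\min}(\Delta B\Delta)\ \ge\ \frac{a-b}{n}\,\min_{i\in[k]} n_i\ >\ 0 ,
\]
which in passing confirms the positive definiteness used above.

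It then remains to plug in the community-size constraint defining $\Theta_0$, namely $n_i\ge \frac{n}{\beta k}-1$ for every $i$: the hypothesis $n\ge 2\beta k$ yields $\frac{n}{\beta k}-1\ge\frac{n}{2\beta k}$, hence $\min_i n_i\ge\frac{n}{2\beta k}$, and therefore $\lambda_k\ge \frac{a-b}{n}\cdot\frac{n}{2\beta k}=\frac{a-b}{2\beta k}$. This is precisely the quantitative bound that is invoked via \eqref{eq:lambda} and in the specialization of Theorems~\ref{thm:ini1}--\ref{thm:ini2} to the spaces \eqref{eq:para-space0}--\eqref{eq:para-space}.

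I do not expect any serious obstacle. The two points that deserve a moment's care are: first, that $\lambda_k(P)$, defined in the paper as a singular value, really coincides with $\lambda_{\min}(\Delta B\Delta)$ — this is exactly where one uses that $P$ is positive semidefinite, which itself is a byproduct of the Weyl computation; and second, the bookkeeping of the additive ``$-1$'' in the community-size bounds, which is exactly what the assumption $n\ge 2\beta k$ is there to absorb.
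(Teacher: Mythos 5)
Your proof is correct and is essentially the same argument as the paper's: both isolate the positive semidefinite rank-one $\frac{b}{n}\ones\ones\trans$ contribution and apply Weyl's inequality to the remaining $\frac{a-b}{n}$ part, whose smallest relevant eigenvalue is $\min_i n_i \geq \frac{n}{\beta k}-1 \geq \frac{n}{2\beta k}$; you merely carry this out on the reduced $k\times k$ matrix $\Delta B\Delta$ after the orthonormal factorization of Lemma~\ref{lem:eigen}, whereas the paper works directly with the $n\times n$ matrix $P-\frac{b}{n}\ones_n\ones_n\trans=\frac{a-b}{n}\sum_i v_iv_i\trans$. Note that, exactly like the paper's own proof, your argument yields the bound $\lambda_k\geq\frac{a-b}{2\beta k}$ (the constant used everywhere else in the paper, e.g.\ in \eqref{eq:lambda}), not the $\frac{a-b}{\beta k}$ displayed in the proposition statement, which appears to be a typo in the statement rather than a defect of either proof.
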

\begin{proof}
Since the eigenvalues of $P$ are invariant with respect to permutation of the community labels, we consider the case where $\sigma(u)=i$ for $u\in\left\{\sum_{j=1}^{i-1}n_j-1,\sum_{j=1}^in_j\right\}$ without loss of generality, where $\sum_{j=1}^0n_j=0$. Let us use the notation $\mathbf{1}_{d}\in\mathbb{R}^{d}$ and $\mathbf{0}_{d}\in\mathbb{R}^{d}$ to denote the vectors with all entries being $1$ and $0$ respectively. Then, it is easy to check that
$$P-\frac{b}{n}\mathbf{1}_{n}\mathbf{1}_{n}^T=\frac{a-b}{n}\sum_{i=1}^kv_iv_i^T,$$
where $v_1=(\mathbf{1}_{n_1}^T,\mathbf{0}_{n_2}^T,...,\mathbf{0}_{n_k}^T)^T$, $v_1=(\mathbf{0}_{n_1}^T,\mathbf{1}_{n_2}^T,\mathbf{0}_{n_3}^T,...,\mathbf{0}_{n_k}^T)^T$,..., $v_k=(\mathbf{0}_{n_1}^T,...,\mathbf{0}_{n_{k-1}}^T,\mathbf{1}_{n_k}^T)^T$. Note that $\{v_i\}_{i=1}^k$ are orthogonal to each other, and therefore
$$\lambda_k\left(\sum_{i=1}^kv_iv_i^T\right)\geq \min_{i\in[k]}n_i\geq \frac{n}{\beta k}-1\geq\frac{n}{2\beta k}.$$
By Weyl's inequality (Theorem 4.3.1 of \cite{horn2012matrix}), 
$$\lambda_k(P)\geq \frac{a-b}{n}\lambda_k\left(\sum_{i=1}^kv_iv_i^T\right)+\lambda_n\left(\frac{b}{n}\mathbf{1}_{n}\mathbf{1}_{n}^T\right)\geq \frac{a-b}{2\beta k}.$$
This completes the proof.
\end{proof}

\begin{proof}[Proof of Theorem \ref{cor:upper1}]
Let us first consider $\Theta_0(n,k,a,b,\beta)$.
By Theorem \ref{thm:ini1} and Proposition \ref{prop:eigen}, the misclassification proportion is bounded by $C\frac{k^2a}{(a-b)^2}$ under the condition $\frac{k^3a}{(a-b)^2}\leq c$ for some small $c$. Thus, Condition \ref{cond:init} holds when $\frac{k^3a}{(a-b)^2}=o(1)$, which leads to the desired conclusion in view of Theorem \ref{thm:upper} and Theorem \ref{thm:upper-a-ll-b}. The proof of the space $\Theta(n,k,a,b,\lambda,\beta;\alpha)$ follows the same argument.
\end{proof}

\begin{proof}[Proof of Theorem \ref{cor:upper2}]
The proof is the same as that of Theorem \ref{cor:upper1}.
\end{proof}

\begin{proof}[Proof of Theorem \ref{thm:upper-bound-ab-known}]
When the parameters $a$ and $b$ are known, we can use $\tau=Ca$ for some sufficiently large $C>0$ for both USC$(\tau)$ and NSC$(\tau)$. Then, the results of Theorem \ref{thm:ini1} and Theorem \ref{thm:ini2} hold without assuming $a\leq C_1b$ or fixed $k$. Moreover, $\wh{a}_u$ and $\wh{b}_u$ in (\ref{eq:t-set}) and (\ref{eq:t-set-cap}) can be replaced by $a$ and $b$. Then, the conditions (\ref{eq:gamma-cond-1}) and (\ref{eq:gamma-cond-2}) in Theorem \ref{thm:upper} and Theorem \ref{thm:upper-a-ll-b} can be weakened as $\gamma=o(k^{-1})$ because the we do not need to establish Lemma \ref{lmm:ab-est} anymore. Combining Theorem \ref{thm:upper}, Theorem \ref{thm:ini1}, Theorem \ref{thm:ini2} and Theorem \ref{thm:upper-a-ll-b}, we obtain the desired results.
\end{proof}

\section{Proofs of Lemma \ref{lem:sp1} and Lemma \ref{lem:sp2}}

The following lemma is Corollary A.1.10 in \cite{alon04}.

\begin{lemma}\label{lem:alon}
For independent Bernoulli random variables $X_u\sim\text{Bern}(p_u)$ and $p=\frac{1}{n}\sum_{u\in[n]}p_u$, we have
$$\mathbb{P}\left(\sum_{u\in[n]}(X_u-p_u)\geq t\right)\leq\exp\left(t-(pn+t)\log\left(1+\frac{t}{pn}\right)\right),$$
for any $t\geq 0$.
\end{lemma}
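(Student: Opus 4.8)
The plan is to prove Lemma~\ref{lem:alon} by the standard exponential (Chernoff) method with an optimally chosen tilt. First I would fix any $\lambda>0$ and apply Markov's inequality to $e^{\lambda\sum_u (X_u-p_u)}$; using independence of the $X_u$'s this gives
\[
\Prob\left(\sum_{u\in[n]}(X_u-p_u)\geq t\right)\leq e^{-\lambda t}\prod_{u\in[n]}\Expect e^{\lambda(X_u-p_u)}=e^{-\lambda t}\prod_{u\in[n]}e^{-\lambda p_u}\bigl(1+p_u(e^\lambda-1)\bigr).
\]
Next I would use the elementary inequality $1+x\leq e^x$ with $x=p_u(e^\lambda-1)\geq 0$ to bound the $u$-th factor by $\exp\bigl(p_u(e^\lambda-1-\lambda)\bigr)$, so the product collapses to $\exp\bigl((e^\lambda-1-\lambda)\sum_u p_u\bigr)=\exp\bigl((e^\lambda-1-\lambda)pn\bigr)$, invoking $\sum_u p_u=pn$. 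Hence
\[
\Prob\left(\sum_{u\in[n]}(X_u-p_u)\geq t\right)\leq\exp\bigl(-\lambda t+(e^\lambda-1-\lambda)pn\bigr)=:\exp(f(\lambda)).
\]

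Then I would minimize $f$ over $\lambda>0$. Differentiating, $f'(\lambda)=-t+(e^\lambda-1)pn$, which vanishes at $\lambda^\ast=\log\!\bigl(1+\tfrac{t}{pn}\bigr)$; since $f$ is convex this is the global minimizer, and it is genuinely positive whenever $t>0$ and $pn>0$ (the case $t=0$ makes the claimed bound the trivial $\Prob(\cdot)\leq 1$, and the case $pn=0$ forces every $X_u\equiv 0$ so the left side is $0$ for $t>0$). Substituting $e^{\lambda^\ast}=1+t/(pn)$ into $f$ gives
\[
f(\lambda^\ast)=-t\log\!\Bigl(1+\tfrac{t}{pn}\Bigr)+\Bigl(\tfrac{t}{pn}-\log\!\bigl(1+\tfrac{t}{pn}\bigr)\Bigr)pn=t-(pn+t)\log\!\Bigl(1+\tfrac{t}{pn}\Bigr),
\]
which is precisely the exponent in the statement, completing the argument.

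The proof is entirely routine, so there is no serious obstacle; the only points deserving a sentence each are the verification that $\lambda^\ast$ lies in the admissible region $\lambda>0$ (immediate from $t/(pn)>0$) and the handling of the degenerate cases $t=0$ and $pn=0$. One could alternatively cite this directly as the quoted Corollary~A.1.10 of \cite{alon04}, but the self-contained derivation above is short enough to include.
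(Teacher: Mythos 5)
Your derivation is correct: the Chernoff bound, the bound $1+x\leq e^x$ on each moment generating factor, and the optimal tilt $\lambda^\ast=\log(1+t/(pn))$ together yield exactly the claimed exponent, and your handling of the degenerate cases $t=0$ and $pn=0$ is sound. The paper itself offers no proof of this lemma -- it simply cites it as Corollary A.1.10 of \cite{alon04} -- and your argument is precisely the standard derivation of that corollary, so there is nothing to reconcile.
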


The following result is Lemma 3.5 in \cite{chin15}.

\begin{lemma} \label{lem:vu}
Consider any adjacency matrix $A\in\{0,1\}^{n\times n}$ for an undirected graph. Suppose $\max_{u\in[n]}\sum_{v\in[n]}A_{uv}\leq \gamma$ and for any $S,T\subset[n]$, one of the following statements holds with some constant $C>0$:
\begin{enumerate}
\item $\frac{e(S,T)}{|S||T|\frac{\gamma}{n}}\leq C$,
\item $e(S,T)\log\left(\frac{e(S,T)}{|S||T|\frac{\gamma}{n}}\right)\leq C|T|\log\frac{n}{|T|}$,
\end{enumerate}
where $e(S,T)$ is the number of edges connecting $S$ and $T$.
Then, $\sum_{(u,v)\in H}x_uA_{uv}y_v\leq C'\sqrt{\gamma}$ uniformly over all unit vectors $x,y$, where $H=\{(u,v): |x_uy_v|\geq \sqrt{\gamma}/n\}$ and $C'>0$ is some constant.
\end{lemma}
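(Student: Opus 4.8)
The final statement to prove is Lemma \ref{lem:vu}, a deterministic bilinear-form bound for adjacency matrices with bounded degree satisfying a discrepancy-type condition. Since the excerpt attributes this lemma to \cite{chin15} (Lemma 3.5 there), the cleanest route is to invoke it as an external result; but if a self-contained argument is wanted, the plan is to follow the now-standard Friedman--Kahn--Szemer\'edi / Feige--Ofek style dichotomy argument. First I would fix unit vectors $x,y$ and, by a discretization step, reduce to the case where $x,y$ have entries in a geometric grid (the standard trick: round each coordinate down in absolute value to the nearest power of $2$ times $1/n$, absorbing the tail $|x_uy_v| < \sqrt{\gamma}/n$ into the definition of the ``heavy pair'' set $H$, and paying only a constant factor). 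This leaves finitely many ``levels'': partition the coordinates of $x$ into blocks $S_i = \{u : |x_u| \in (2^{i-1}, 2^{i}]\cdot\text{(scale)}\}$ and similarly $T_j$ for $y$, so that the bilinear sum over $H$ decomposes as $\sum_{i,j} \sum_{u\in S_i, v\in T_j} x_u A_{uv} y_v$, bounded by $\sum_{i,j} \alpha_i \beta_j\, e(S_i,T_j)$ where $\alpha_i,\beta_j$ are the level magnitudes.

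The core of the argument is then to bound $\sum_{i,j} \alpha_i\beta_j e(S_i, T_j)$ by $C'\sqrt{\gamma}$, splitting the index pairs $(i,j)$ into two regimes according to which of the two alternatives in the hypothesis applies. For pairs where alternative (1) holds, $e(S_i,T_j) \le C |S_i||T_j|\gamma/n$, and the contribution is controlled directly using $\sum_i \alpha_i^2 |S_i| \le \|x\|^2 = 1$ (and likewise for $y$) together with Cauchy--Schwarz, giving an $O(\sqrt{\gamma})$ bound. For pairs where alternative (2) holds --- the ``heavy'' pairs where the edge count is anomalously large --- one uses the logarithmic gain $e(S_i,T_j)\log(\tfrac{e(S_i,T_j)}{|S_i||T_j|\gamma/n}) \le C|T_j|\log\tfrac{n}{|T_j|}$. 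Here I would group the heavy pairs further by the value of the discrepancy ratio $\mu_{ij} = e(S_i,T_j)/(|S_i||T_j|\gamma/n)$, and in each group exploit the bounded-degree hypothesis $\max_u \sum_v A_{uv}\le \gamma$ (which forces $e(S_i,T_j)\le \gamma |S_i|$ and $\le \gamma|T_j|$) to show that the total mass of heavy pairs, weighted by $\alpha_i\beta_j$, still sums to $O(\sqrt{\gamma})$; the point is that the logarithmic constraint prevents too many pairs from being simultaneously heavy and large-magnitude, and the entropy-type sum $\sum_j |T_j|\log\tfrac{n}{|T_j|}$ telescopes against $\sum_j \beta_j^2|T_j|\le 1$ after a Cauchy--Schwarz / convexity estimate.

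The main obstacle is the bookkeeping in the heavy-pair regime: one must simultaneously track the level indices $i,j$, the magnitudes $\alpha_i,\beta_j$, the cardinalities $|S_i|,|T_j|$, and the discrepancy ratios, and show all cross terms collapse to a single $O(\sqrt\gamma)$ with an absolute constant independent of $n$. This is exactly the delicate combinatorial counting that occupies the bulk of the Feige--Ofek argument; the two alternatives in the hypothesis are precisely tailored so that this counting goes through. Given that the excerpt cites \cite{chin15} explicitly for this statement, in the paper I would simply write: \emph{This is Lemma 3.5 of \cite{chin15}; we refer the reader there for the proof.} The substantive use of this lemma in the present paper is downstream, in establishing the operator-norm bounds $\opnorm{T_\tau(A)-P} = O(\sqrt{np_{\max}+1})$ and $\opnorm{L(A_\tau)-L(P_\tau)} = O(\sqrt{\log(e(np_{\max}+1))/(np_{\max}+1)})$ of Lemmas \ref{lem:sp1} and \ref{lem:sp2}, where one verifies that the trimmed (resp.\ regularized) graph satisfies the discrepancy hypotheses with high probability via Lemma \ref{lem:alon}, and then applies Lemma \ref{lem:vu} with $\gamma \asymp np_{\max}+1$.
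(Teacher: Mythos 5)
Your proposal matches the paper exactly: the paper states this lemma with no proof at all, simply attributing it to Lemma 3.5 of the cited work of Chin, Rao and Vu, which is precisely what you recommend doing. Your supplementary sketch of the Friedman--Kahn--Szemer\'edi / Feige--Ofek dichotomy argument is a reasonable outline of how the external proof goes, but it is not needed here.
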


The following lemma is critical for proving both theorems.

\begin{lemma} \label{lem:remain}
For any $\tau>C(1+np_{\max})$ with some sufficiently large $C>0$, we have
$$|\{u\in[n]: d_u\geq \tau\}|\leq \frac{n}{\tau}$$
with probability at least $1-e^{-C'n}$ for some constant $C'>0$.
\end{lemma}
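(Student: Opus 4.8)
The plan is to bound the number of high-degree vertices by a union bound over vertex subsets, controlled by the Bennett-type inequality of Lemma~\ref{lem:alon}. First I would dispose of two trivialities: if $\tau\geq n$ then $d_u\leq n-1<\tau$ for every $u$, so the set is empty; and if $p_{\max}=0$ then $A\equiv 0$ almost surely. So assume $\tau<n$ and $p_{\max}>0$, and set $s=\floor{n/\tau}+1$, which satisfies $n<s\tau\leq 2n$ and $n/s<\tau$. Since $|\sth{u:d_u\geq\tau}|$ is an integer, the complement of the claimed event forces the existence of a set $S\subseteq[n]$ with $|S|=s$ and $d_u\geq\tau$ for all $u\in S$; hence it suffices to bound $\Prob\sth{d_u\geq\tau\ \forall u\in S}$ for a fixed such $S$ and then union-bound over the $\binom{n}{s}\leq(en/s)^s\leq(e\tau)^s$ choices of $S$.

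For a fixed $S$, the key observation is $\sum_{u\in S}d_u\leq 2W_S$, where $W_S=\sum_{u<v,\ \sth{u,v}\cap S\neq\varnothing}A_{uv}$ is a sum of $N_S\leq sn$ independent Bernoulli variables with total mean $\mu_S\leq N_S p_{\max}\leq sn\,p_{\max}$ (the edges inside $S$ are each counted twice in $\sum_{u\in S}d_u$, those leaving $S$ once). On the event in question, $2W_S\geq\sum_{u\in S}d_u\geq s\tau$, so $W_S\geq s\tau/2$. Because $\tau>C(1+np_{\max})\geq Cnp_{\max}$, for $C\geq 4$ we get $\mu_S\leq sn\,p_{\max}<s\tau/4$, hence $W_S-\mu_S\geq s\tau/4=:t$. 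Applying Lemma~\ref{lem:alon}, and using that the resulting bound $\exp\pth{t-(\mu+t)\log(1+t/\mu)}$ is monotone increasing in $\mu$ (its $\mu$-derivative is $t/\mu-\log(1+t/\mu)\geq 0$) so that $\mu_S$ may be replaced by the upper bound $m=sn\,p_{\max}$, for which $r:=t/m=\tau/(4np_{\max})>C/4$, together with the elementary inequality $(1+1/r)\log(1+r)\geq\log(1+r)$, gives
\[ \Prob\sth{d_u\geq\tau\ \forall u\in S}\ \leq\ \exp\pth{t\bigl(1-\log(1+C/4)\bigr)}\ \leq\ \exp\pth{-\tfrac{t}{2}\log(1+C/4)}\ =\ \exp\pth{-\tfrac{s\tau}{8}\log(1+C/4)}, \]
valid once $C$ is large enough that $\log(1+C/4)\geq 2$.

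Combining with the union bound yields $\Prob(\text{bad})\leq\exp\bigl(s\bigl[1+\log\tau-\tfrac{\tau}{8}\log(1+C/4)\bigr]\bigr)$. The final step is to observe that for a large enough absolute constant $C$ one has $\tfrac{\tau}{16}\log(1+C/4)\geq 1+\log\tau$ for every $\tau\geq C$: the function $\tau\mapsto\tfrac{\tau}{16}\log(1+C/4)-1-\log\tau$ is increasing on $[C,\infty)$ (its derivative is $\tfrac{1}{16}\log(1+C/4)-1/\tau>0$ there for $C$ large) and nonnegative at $\tau=C$ for $C$ large. Then the bracket is at most $-\tfrac{\tau}{16}\log(1+C/4)$, and since $s\tau>n$ we conclude $\Prob(\text{bad})\leq\exp\pth{-\tfrac{n}{16}\log(1+C/4)}$, i.e.\ the assertion with $C'=\tfrac{1}{16}\log(1+C/4)$. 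The main obstacle is this last balancing: the union bound costs $\log\binom{n}{s}\approx s\log(e\tau)$ while the per-set tail saves order $s\tau\log(1+C/4)$; these reconcile precisely because $n/s\lesssim\tau$ makes the entropy only $O(\log\tau)$ per vertex against a $\Theta(\tau)$ gain, so a sufficiently large absolute constant $C$ wins.
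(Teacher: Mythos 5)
Your proof is correct and follows essentially the same route as the paper's: a union bound over vertex subsets whose members all have degree at least $\tau$, with the per-subset probability controlled by the Bennett-type bound of Lemma~\ref{lem:alon} applied to edge counts, the entropy cost $O(s\log\tau)$ being dominated by the tail gain $\Theta(s\tau)$. The only (cosmetic) differences are that you fix a single subset size $s=\lfloor n/\tau\rfloor+1$ and combine internal and crossing edges into one variable $W_S$ via $\sum_{u\in S}d_u\leq 2W_S$, whereas the paper sums over all sizes $l>n/\tau$ and treats $e(S)$ and $e(S,S^c)$ as two separate cases.
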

\begin{proof}
Let us consider any fixed subset of nodes $S\subset[n]$ such that it has degree at least $\tau$ and $|S|=l$ for some $l\in[n]$. Let $e(S)$ be the number of edges in the subgraph $S$ and $e(S,S^c)$ be the number of edges connecting $S$ and $S^c$. By the requirement on $S$,
 either $e(S)\geq C_1l\tau$ or $e(S,S^c)\geq C_1l\tau$ for some universal constant $C_1>0$. We are going to show that both $\mathbb{P}\left(e(S)\geq C_1l\tau\right)$ and $\mathbb{P}\left(e(S,S^c)\geq C_1l\tau\right)$ are small. Note that $\mathbb{E}e(S)\leq C_2l^2p_{\max}$ and $\mathbb{E}e(S,S^c)\leq C_2lnp_{\max}$ for some universal $C_2>0$. Then, when $\tau>C(np_{\max}+1)$ for some sufficiently large $C>0$, Lemma \ref{lem:alon} implies
$$\mathbb{P}\left(e(S)\geq C_1l\tau\right)\leq \exp\left(-\frac{1}{4}C_1l\tau\log\left(1+\frac{C_1\tau}{2C_2l p_{\max}}\right)\right),$$
and
$$\mathbb{P}\left(e(S,S^c)\geq C_1l\tau\right)\leq\exp\left(-\frac{1}{4}C_1l\tau\log\left(1+\frac{C_1\tau}{2C_2n p_{\max}}\right)\right).$$
Applying union bound, the probability that the number of nodes with degree at least $\tau$ is greater than $\xi n$ is 
\begin{eqnarray*}
&& \mathbb{P}\Big(|\{u\in[n]: d_u\geq \tau\}|>\xi n\Big)\\
&\leq& \sum_{l>\xi n} \mathbb{P}\Big(|\{u\in[n]: d_u\geq \tau\}|=l\Big) \\
&\leq& \sum_{l>\xi n}\sum_{|S|=l}\left(\mathbb{P}\left(e(S)\geq C_1l\tau\right)+\mathbb{P}\left(e(S,S^c)\geq C_1l\tau\right)\right) \\
&\leq& \sum_{l>\xi n}\exp\left(l\log\frac{en}{l}\right)\left(\exp\left(-\frac{1}{4}C_1l\tau\log\left(1+\frac{C_1\tau}{2C_2l p_{\max}}\right)\right)\right.\\
&& \left.+\exp\left(-\frac{1}{4}C_1l\tau\log\left(1+\frac{C_1\tau}{2C_2n p_{\max}}\right)\right)\right)\\
&\leq& \sum_{l>\xi n}2\exp\left(l\log\frac{en}{l}-\frac{1}{4}C_1l\tau\log\left(1+\frac{C_1\tau}{2C_2n p_{\max}}\right)\right) \\
&\leq& \exp(-C'n),
\end{eqnarray*} 
where the last inequality is by choosing $\xi=\tau^{-1}$. Therefore, with probability at least $1-e^{-C'n}$, the number of nodes with degree at least $\tau$ is bounded by $\tau^{-1}n$.
\end{proof}

\begin{lemma} \label{lem:core}
Given $\tau>0$,
define the subset $J=\{u\in[n]: d_u\leq\tau\}$. Then for any $C'>0$, there is some $C>0$ such that
$$\opnorm{A_{JJ}-P_{JJ}}\leq C\left(\sqrt{np_{\max}}+\sqrt{\tau}+\frac{np_{\max}}{\sqrt{\tau}+\sqrt{np_{\max}}}\right),$$
with probability at least $1-n^{-C'}$.
\end{lemma}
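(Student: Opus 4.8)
The plan is to control $\opnorm{A_{JJ} - P_{JJ}}$ by splitting the matrix into a ``core'' part on which all row/column sums are small and applying the combinatorial bound of Lemma \ref{lem:vu} (the Chin--Rao--Vu bound). Since $J = \{u : d_u \leq \tau\}$ was chosen precisely so that the submatrix $A_{JJ}$ has maximum degree at most $\tau$, the degree hypothesis of Lemma \ref{lem:vu} is satisfied with $\gamma = \tau$ for the matrix $A_{JJ}$ (padded with zeros to an $n\times n$ matrix). So the first step is to write $\opnorm{A_{JJ} - P_{JJ}} \leq \opnorm{A_{JJ} - \Expect[A_{JJ} \mid J]} + \opnorm{\Expect[A_{JJ}\mid J] - P_{JJ}}$, or more simply to bound the bilinear form $\sup_{\|x\|=\|y\|=1} |x^T (A_{JJ} - P_{JJ}) y|$ directly, $x,y$ supported on $J$.

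Second, I would split the index set of pairs $(u,v)$ into the ``heavy'' part $H = \{(u,v): |x_u y_v| \geq \sqrt{\tau}/n\}$ and its complement. On the light part, a direct calculation using $\sum_u x_u^2 = \sum_v y_v^2 = 1$ and $|P_{uv}|\leq p_{\max}$ gives $\sum_{(u,v)\notin H} |x_u y_v| (A_{uv} + P_{uv}) \lesssim \sqrt{np_{\max}} + \sqrt{\tau} + \frac{np_{\max}}{\sqrt{\tau}}$ type bounds by Cauchy--Schwarz together with the cardinality bound $|H| \leq n^2/(\tau) \cdot (\text{stuff})$ — this is the standard ``discrepancy'' argument and I would not grind through it. On the heavy part, I need to verify that with probability $\geq 1 - n^{-C'}$ one of the two discrepancy conditions in Lemma \ref{lem:vu} holds for all $S, T \subseteq [n]$. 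This is where the bulk of the work lies: it is the combinatorial/union-bound argument showing that no pair of vertex subsets has anomalously many edges between them. Concretely, for each pair $(S,T)$ with $|S| = s, |T| = t$, $e(S,T)$ is a sum of independent Bernoullis with mean $\leq st \, p_{\max}$, and Lemma \ref{lem:alon} gives the tail bound; then union-bounding over the $\binom{n}{s}\binom{n}{t}$ choices, the entropy term $s\log\frac{en}{s} + t\log\frac{en}{t}$ is absorbed either by condition (1) when $e(S,T) \lesssim st\,p_{\max}$ or condition (2) when $e(S,T)$ is large relative to its mean.

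Third, I would combine: by Lemma \ref{lem:vu}, on the good event the heavy-part contribution $\sum_{(u,v)\in H} x_u A_{uv} y_v \lesssim \sqrt{\tau}$ uniformly over unit $x, y$, and the corresponding $P$-part contribution $\sum_{(u,v)\in H} x_u P_{uv} y_v$ is bounded deterministically by $np_{\max}/\sqrt{\tau}$ (since each heavy pair has $|x_u y_v| \geq \sqrt{\tau}/n$, hence $|H|\cdot\tau/n^2 \leq 1$, and $\sum_H |x_u y_v| P_{uv} \leq p_{\max}\sum_H |x_u y_v| \leq p_{\max} \cdot \frac{n}{\sqrt\tau}$ by a counting argument). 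Adding the light-part bound gives the claimed
\[
\opnorm{A_{JJ} - P_{JJ}} \leq C\pth{\sqrt{np_{\max}} + \sqrt{\tau} + \frac{np_{\max}}{\sqrt{\tau} + \sqrt{np_{\max}}}}
\]
with probability $\geq 1 - n^{-C'}$.

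The main obstacle is the uniform-over-$(S,T)$ verification of the Lemma \ref{lem:vu} hypotheses: one must carefully handle the regime where $s$ or $t$ is small (so the entropy terms $s\log(en/s)$ are not negligible relative to $st\,p_{\max}$) versus large, and check that in the small regime condition (2) kicks in while in the large regime condition (1) suffices, all while keeping the failure probability summable to $n^{-C'}$ for arbitrary $C'$ by choosing the constant $C$ in $\tau > C(1+np_{\max})$ large enough. This is essentially a re-derivation of the Chin--Rao--Vu concentration lemma adapted to the inhomogeneous Bernoulli setting with parameter $p_{\max}$, and the bookkeeping with the $+1$'s (to handle the very sparse case $np_{\max} = o(1)$) requires care but is routine. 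Everything else — the light/heavy split and the deterministic $P$-part estimates — is a short Cauchy--Schwarz computation.
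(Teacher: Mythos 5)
Your overall architecture is the same as the paper's: write $\opnorm{A_{JJ}-P_{JJ}}$ as a bilinear form, split the index pairs into light and heavy couples at threshold $\asymp(\sqrt{\tau}+\sqrt{np_{\max}})/n$, bound the heavy random part via Lemma \ref{lem:vu} (whose degree hypothesis holds for $A_{JJ}$ precisely because $J$ trims to degree $\leq\tau$), and bound the heavy deterministic part by the counting argument $\sum_{H}|x_uy_v|P_{uv}\leq p_{\max}\sum_H x_u^2y_v^2/|x_uy_v|\leq np_{\max}/(\sqrt{\tau}+\sqrt{np_{\max}})$. Those two pieces of your sketch are correct and match the paper, as does your identification of the uniform-over-$(S,T)$ verification of the Lemma \ref{lem:vu} hypotheses as the main technical burden (the paper defers that to the argument of Lei and Rinaldo).

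The genuine gap is in your treatment of the light couples. You propose to bound $\sum_{(u,v)\notin H}|x_uy_v|(A_{uv}+P_{uv})$ by Cauchy--Schwarz and a cardinality count, i.e.\ you put absolute values inside the sum. This cannot give $\sqrt{np_{\max}}+\sqrt{\tau}$: already the deterministic piece satisfies only $\sum_{u,v}|x_uy_v|P_{uv}\leq p_{\max}\norm{x}_1\norm{y}_1\leq np_{\max}$, which is of strictly larger order than $\sqrt{np_{\max}}+\sqrt{\tau}$ whenever $np_{\max}\gg 1+\tau$ is not forced (and even when $\tau\asymp np_{\max}$ it exceeds $\sqrt{np_{\max}}$ by a factor $\sqrt{np_{\max}}$). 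The light part genuinely requires cancellation: one must keep the centered form $\sum_{(u,v)\in L}x_u(A_{uv}-\Expect A_{uv})y_v$, note that each summand is bounded by $(\sqrt{\tau}+\sqrt{np_{\max}})/n$ on $L$ while the variance is at most $p_{\max}$, apply Bernstein's inequality for fixed $(x,y)$, and union bound over a $1/2$-net of the sphere of size $5^n$ (plus the $O(1)$ diagonal term $\opnorm{\Expect A-P}\leq 1$). This net-plus-Bernstein step is the second source of randomness in the proof and is entirely absent from your sketch, in which the only probabilistic event is the discrepancy verification for Lemma \ref{lem:vu}. Without it the light-couple bound does not hold, so this is a missing idea rather than omitted bookkeeping.
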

\begin{proof}
The idea of the proof follows the argument in \cite{friedman1989second,feige2005spectral}.
By definition,
$$\opnorm{A_{JJ}-P_{JJ}}=\sup_{x,y\in S^{n-1}}\sum_{(u,v)\in J\times J} x_u(A_{uv}-P_{uv})y_v.$$
Define $L=\{(u,v): |x_uy_v|\leq (\sqrt{\tau}+\sqrt{p_{\max}n})/n\}$ and $H=\{(u,v): |x_uy_v|\geq (\sqrt{\tau}+\sqrt{p_{\max}n})/n\}$, then we have
$$\opnorm{A_{JJ}-P_{JJ}}\leq \sup_{x,y\in S^{n-1}}\sum_{(u,v)\in L\cap J\times J}x_u(A_{uv}-P_{uv})y_v+\sup_{x,y\in S^{n-1}}\sum_{(u,v)\in H\cap J\times J}x_u(A_{uv}-P_{uv})y_v.$$
A discretization argument in \cite{chin15} implies that
\begin{eqnarray*}
\sup_{x,y\in S^{n-1}}\sum_{(u,v)\in L\cap J\times J}x_u(A_{uv}-P_{uv})y_v &\lesssim& \max_{x,y\in\mathcal{N}}\max_{S\subset[n]}\sum_{(u,v)\in L\cap S\times S}x_u(A_{uv}-\mathbb{E} A_{uv})y_v \\
&& + \max_{x,y\in\mathcal{N}}\max_{S\subset[n]}\sum_{(u,v)\in L\cap S\times S}x_u(\mathbb{E}A_{uv}-P_{uv})y_v,
\end{eqnarray*}
where $\mathcal{N}\subset S^{n-1}$ and $|\mathcal{N}|\leq 5^n$. Then, Bernstein's inequality and union bound imply that $\max_{x,y\in\mathcal{N}}\max_{S\subset[n]}\sum_{(u,v)\in L\cap S\times S}x_u(A_{uv}-\mathbb{E}A_{uv})y_v\leq C(\sqrt{\tau}+\sqrt{n p_{\max}})$ with probability at least $1-e^{-C'n}$. We also have $\max_{x,y\in\mathcal{N}}\max_{S\subset[n]}\sum_{(u,v)\in L\cap S\times S}x_u(\mathbb{E}A_{uv}-P_{uv})y_v\leq\opnorm{\mathbb{E}A-P}\leq 1$.
 This completes the first part.

To bound the second part $\sup_{x,y\in S^{n-1}}\sum_{(u,v)\in H\cap J\times J}x_u(A_{uv}-P_{uv})y_v$, we are going to bound $\sup_{x,y\in S^{n-1}}\sum_{(u,v)\in H\cap J\times J}x_uA_{uv}y_v$ and $\sup_{x,y\in S^{n-1}}\sum_{(u,v)\in H\cap J\times J}x_uP_{uv}y_v$ separately. By the definition of $H$,
$$\sup_{x,y\in S^{n-1}}\sum_{(u,v)\in H\cap J\times J}x_uP_{uv}y_v=\sup_{x,y\in S^{n-1}}\sum_{(u,v)\in H\cap J\times J}\frac{x_u^2y_v^2}{|x_uy_v|}P_{uv}\leq \frac{np_{\max}}{\sqrt{\tau}+\sqrt{p_{\max}n}}.$$
To bound $\sup_{x,y\in S^{n-1}}\sum_{(u,v)\in H\cap J\times J}x_uA_{uv}y_v$, it is sufficient to check the conditions of Lemma \ref{lem:vu} for the graph $A_{JJ}$. By definition, its degree is bounded by $\tau$. Following the argument of \cite{lei14}, the two conditions of Lemma \ref{lem:vu} hold with $\gamma=\tau+np_{\max}$ with probability at least $1-n^{-C'}$. Thus, $\sup_{x,y\in S^{n-1}}\sum_{(u,v)\in H\cap J\times J}x_uA_{uv}y_v\leq C(\sqrt{\tau}+\sqrt{np_{\max}})$ with probability at least $1-n^{-C'}$. Hence, the proof is complete.
\end{proof}

\begin{proof}[Proof of Lemma \ref{lem:sp1}]
By triangle inequality,
$$\opnorm{T_{\tau}(A)-P}\leq \opnorm{T_{\tau}(A)-T_{\tau}(P)}+\opnorm{T_{\tau}(P)-P},$$
where $T_{\tau}(P)$ is the matrix obtained by zeroing out the $u\Th$ row and column of $P$ with $d_u\geq\tau$.
Let $J=\{u\in[n]: d_u\leq\tau\}$, and then $\opnorm{T_{\tau}(A)-T_{\tau}(P)}=\opnorm{A_{JJ}-P_{JJ}}$, whose bound has been established in Lemma \ref{lem:core}. By Lemma \ref{lem:remain}, $|J^c|\leq n/\tau$ with high probability. This implies $\opnorm{T_{\tau}(P)-P}\leq \fnorm{T_{\tau}(P)-P}\leq\sqrt{2n|J^c|p_{\max}^2}\leq \frac{\sqrt{2}np_{\max}}{\sqrt{\tau}}$. Taking $\tau\in[C_1(1+np_{\max}),C_2(1+np_{\max})]$, the proof is complete.
\end{proof}

Now let us prove Lemma \ref{lem:sp2}. The following lemma, which controls the degree, is Lemma 7.1 in \cite{le2015sparse}.

\begin{lemma} \label{lem:degreeset}
For any $C'>0$, there exists some $C>0$ such that with probability at least $1-n^{-C'}$, there exists a subset $J\subset [n]$ satisfying $n-|J|\leq \frac{n}{2e(np_{\max}+1)}$ and
$$|d_v-\mathbb{E}d_v|\leq C\sqrt{(np_{\max}+1)\log(e(np_{\max}+1))},\quad\text{for all }v\in J,$$
where $d_v=\sum_{u\in[n]}A_{uv}$.
\end{lemma}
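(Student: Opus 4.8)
\textit{Proof plan.} Abbreviate $\Delta := np_{\max}+1$ and set the target deviation $t := C\sqrt{\Delta\log(e\Delta)}$, where $C$ is a large absolute constant to be chosen in terms of $C'$. Let $B := \{v\in[n] : |d_v-\mathbb{E}d_v| > t\}$ and take $J := [n]\setminus B$; the claim then reduces to the tail estimate $\mathbb{P}\bigl(|B| > \tfrac{n}{2e\Delta}\bigr) \le n^{-C'}$. I would treat two ranges of $\Delta$ separately.

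In the \emph{dense} range $\Delta > n/(c_0\log n)$ for a suitable small constant $c_0$, one has $\log(e\Delta) \ge \tfrac12\log n$ for large $n$, and a plain union bound suffices: for each vertex $v$, $d_v = \sum_{u\ne v}A_{uv}$ is a sum of independent Bernoulli variables with $\mathbb{E}d_v \le np_{\max} \le \Delta$, so Bernstein's inequality gives $\mathbb{P}(|d_v-\mathbb{E}d_v| > t) \le 2\exp\bigl(-\tfrac{t^2}{2(\Delta+t/3)}\bigr)$, which for $n$ large (when necessarily $t \le \Delta$) is at most $2\exp(-\tfrac{C^2}{8}\log n) \le n^{-C'-1}$ once $C$ is large; a union bound over the $n$ vertices then shows $B = \varnothing$, so $J = [n]$, with probability $\ge 1-n^{-C'}$.

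The substantive case is the \emph{sparse} range $\Delta \le n/(c_0\log n)$, and here I would imitate the subset-counting argument of Lemma~\ref{lem:remain}. Put $m := \lfloor n/(2e\Delta)\rfloor + 1$, so $m \asymp n/\Delta$ and $m \ge \tfrac{c_0}{2e}\log n$ in this range. If $|B| \ge m$, then, writing $B = B^+ \sqcup B^-$ with $B^{\pm} = \{v : \pm(d_v-\mathbb{E}d_v) > t\}$, there is a set $S$ with $|S| = \lceil m/2\rceil$ contained entirely in $B^+$ or entirely in $B^-$. Using the edge decomposition $\sum_{v\in S}d_v = 2e(S) + e(S,S^c)$, the first alternative forces
\[
e(S) - \mathbb{E}e(S) \ge \tfrac{|S|t}{4} \quad\text{or}\quad e(S,S^c) - \mathbb{E}e(S,S^c) \ge \tfrac{|S|t}{2},
\]
and the second forces the corresponding lower-tail deviations. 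Since $e(S)$ and $e(S,S^c)$ are sums of independent $\{0,1\}$ variables with means at most $\tfrac12|S|^2p_{\max}$ and $|S|np_{\max}$, Bernstein's inequality together with the choice of $t$ bounds each of these four events by $\exp\bigl(-c_1|S|\log(e\Delta)\bigr)$ when $np_{\max} \gtrsim t$ and by $\exp\bigl(-c_1|S|\sqrt{\Delta\log(e\Delta)}\bigr)$ when $np_{\max} \lesssim t$ (the latter regime forcing $\Delta \lesssim C^2\log C$, hence $\log(e\Delta)\lesssim\log C$). A union bound over the $\binom{n}{\lceil m/2\rceil} \le \exp\bigl(O(|S|\log(e\Delta))\bigr)$ choices of $S$ absorbs the combinatorial factor once $C$ is large, and since $|S| \gtrsim \log n$ the result is at most $n^{-C'}$ for $C$ large enough depending only on $C'$ and absolute constants. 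Combining the two ranges yields the lemma.

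The main obstacle is precisely the passage from the easy expectation bound $\mathbb{E}|B| = O(n/\Delta)$ to a high-probability bound on $|B|$: the indicators $\indc{v\in B}$ are pairwise dependent (any two vertices share an edge) while $|B|$ concentrates on a scale far below $\sqrt n$, so Markov's inequality and McDiarmid's bounded-difference inequality are both too lossy. This forces the subset/Chernoff route above, and the bookkeeping it entails — the dense/sparse dichotomy and the split between $\log(e\Delta)$- and $\sqrt{\log(e\Delta)}$-type Bernstein exponents — together with the requirement that these exponents strictly dominate the entropy term $|S|\log(en/|S|)$ uniformly over $p_{\max}\in(0,1)$, is what pins down how large $C$ must be taken.
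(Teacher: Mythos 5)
The paper does not actually prove this statement: it is imported verbatim as Lemma~7.1 of \cite{le2015sparse}, so there is no internal proof to compare against, and your argument is a correct self-contained derivation. The dense/sparse dichotomy is the right one --- for $\Delta=np_{\max}+1\gtrsim n/\log n$ a per-vertex Bernstein bound plus a union bound already forces $B=\varnothing$, while for sparser graphs the per-vertex tail $\exp(-cC^2\log(e\Delta))$ is only polynomially small in $\Delta$, so one must pass to subsets. Your subset step is sound: if $m\asymp n/\Delta$ bad vertices exist, half of them deviate in the same direction; the identity $\sum_{v\in S}d_v=2e(S)+e(S,S^c)$ converts a per-vertex deviation of size $t$ into a deviation of order $|S|t$ for one of two sums of independent Bernoullis; and the resulting Bernstein exponent ($\asymp C^2|S|\log(e\Delta)$ when $np_{\max}\gtrsim t$, $\asymp C|S|\sqrt{\Delta\log(e\Delta)}$ otherwise, with $\Delta\lesssim C^2\log C$ forced in the latter regime) dominates both the entropy $|S|\log(en/|S|)\asymp|S|\log(e\Delta)$ and, since $|S|\gtrsim\log n$ in the sparse range, the target $C'\log n$. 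The only cosmetic looseness is that the constants $c_1$ in your stated tail bounds must be understood as growing with $C$ (proportional to $C^2$, respectively $C$); otherwise the combinatorial factor could not be absorbed ``once $C$ is large'' --- but your concluding sentences make clear this is intended. In effect you reuse the same subset-counting/Chernoff device the paper deploys for Lemma~\ref{lem:remain}, now applied to degree deviations rather than to large degrees, which buys a self-contained proof where the paper relies on an external reference.
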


Using this lemma, together with Lemma \ref{lem:remain} and Lemma \ref{lem:core}, we are able to prove the following result, which improves the bound in Theorem 7.2 of \cite{le2015sparse}.

\begin{lemma} \label{lem:Lcore}
For any $C'>0$, there exists some $C>0$ such that with probability at least $1-n^{-C'}$, there exists a subset $J\subset[n]$ satisfying $n-|J|\leq n/d$ and
$$\opnorm{(L(A_{\tau})-L(P_{\tau}))_{J\times J}}\leq C\left(\frac{\sqrt{d\log d}(d+\tau)}{\tau^2}+\frac{\sqrt{d}}{\tau}\right),$$
where $d=e(np_{\max}+1)$.
\end{lemma}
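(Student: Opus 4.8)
## Proof proposal for Lemma \ref{lem:Lcore}

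The plan is to combine the degree-control lemma (Lemma \ref{lem:degreeset}), the trimming lemma (Lemma \ref{lem:remain}), and the core bound on the adjacency matrix (Lemma \ref{lem:core}) so as to reduce the normalized quantity $\opnorm{(L(A_\tau) - L(P_\tau))_{J\times J}}$ to the unnormalized quantity $\opnorm{(A-P)_{JJ}}$ up to multiplicative factors coming from the degrees. Throughout I write $d = e(np_{\max}+1)$ and $\tau \asymp d$ (this is what the ambient hypothesis $\tau \in [C_1(np_{\max}+1), C_2(np_{\max}+1)]$ gives). Recall $A_\tau = A + \frac{\tau}{n}\mathbf{1}\mathbf{1}^T$, $P_\tau = P + \frac{\tau}{n}\mathbf{1}\mathbf{1}^T$, and for a symmetric nonnegative matrix $M$ with row sums $\delta_v(M)$ the Laplacian is $L(M)_{uv} = \delta_u(M)^{-1/2}\delta_v(M)^{-1/2} M_{uv}$, i.e. $L(M) = D_M^{-1/2} M D_M^{-1/2}$ with $D_M = \diag(\delta_v(M))$.

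First I would fix the index set $J$. Take $J$ to be the intersection of: (i) the set given by Lemma \ref{lem:degreeset} on which $|d_v - \Expect d_v| \le C\sqrt{(np_{\max}+1)\log(e(np_{\max}+1))} = C\sqrt{d \log d}$, and (ii) the complement of the high-degree set $\{u: d_u \ge \tau'\}$ for a suitable $\tau' \asymp \tau$, whose size is $\le n/\tau'$ by Lemma \ref{lem:remain}. Then $n - |J| \le n/d$ (after adjusting constants), and on $J$ every degree satisfies $\delta_v(A_\tau) = d_v + \tau \asymp \tau \asymp d$ (using that $\Expect d_v \le np_{\max} \lesssim \tau$), and similarly $\delta_v(P_\tau) = \bar d_v + \tau \asymp \tau$; moreover $|\delta_v(A_\tau) - \delta_v(P_\tau)| = |d_v - \bar d_v| \le |d_v - \Expect d_v| + |\Expect d_v - \bar d_v| \le C\sqrt{d\log d} + 1 \lesssim \sqrt{d\log d}$ on $J$. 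These are the only two facts about degrees I will use.

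Next, on $J \times J$ write the difference as a telescoping sum. Let $D_1 = \diag(\delta_v(A_\tau))_{v\in J}$, $D_2 = \diag(\delta_v(P_\tau))_{v\in J}$, and restrict $A_\tau, P_\tau$ to $J\times J$. Then
\begin{align*}
(L(A_\tau) - L(P_\tau))_{JJ}
&= D_1^{-1/2} (A_\tau)_{JJ} D_1^{-1/2} - D_2^{-1/2} (P_\tau)_{JJ} D_2^{-1/2}\\
&= D_1^{-1/2}\big((A_\tau)_{JJ} - (P_\tau)_{JJ}\big) D_1^{-1/2}
 + (D_1^{-1/2} - D_2^{-1/2})(P_\tau)_{JJ} D_1^{-1/2}\\
&\quad + D_2^{-1/2}(P_\tau)_{JJ}(D_1^{-1/2} - D_2^{-1/2}).
\end{align*}
For the first term, note $(A_\tau)_{JJ} - (P_\tau)_{JJ} = (A - P)_{JJ}$ since the rank-one shift cancels, and $\opnorm{D_1^{-1/2}} \lesssim \tau^{-1/2}$; Lemma \ref{lem:core} (with $\tau$ playing the role of its degree bound and $np_{\max} \lesssim \tau$) gives $\opnorm{(A-P)_{JJ}} \lesssim \sqrt{\tau} + \sqrt{np_{\max}} + \frac{np_{\max}}{\sqrt\tau} \lesssim \sqrt{d}$, so this term is $\lesssim \sqrt{d}/\tau$. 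For the remaining two terms, I would use $\opnorm{(P_\tau)_{JJ}} \le \opnorm{P_\tau} \le \opnorm{P} + \tau \lesssim np_{\max} + \tau \lesssim \tau + d$ (bounding $\opnorm{P}$ crudely by $np_{\max}+1 \lesssim d$), $\opnorm{D_2^{-1/2}} \lesssim \tau^{-1/2}$, and the elementary bound $|x^{-1/2} - y^{-1/2}| = \frac{|x-y|}{\sqrt x \sqrt y(\sqrt x + \sqrt y)} \lesssim \frac{|x-y|}{\tau^{3/2}}$ for $x,y \asymp \tau$, so that $\opnorm{D_1^{-1/2} - D_2^{-1/2}} \lesssim \frac{\sqrt{d\log d}}{\tau^{3/2}}$. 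Multiplying out, each of these two terms is $\lesssim \tau^{-1/2}\cdot(\tau + d)\cdot \frac{\sqrt{d\log d}}{\tau^{3/2}} = \frac{\sqrt{d\log d}\,(d+\tau)}{\tau^2}$. Adding the three contributions and taking the worst case gives $\opnorm{(L(A_\tau)-L(P_\tau))_{JJ}} \lesssim \frac{\sqrt{d\log d}\,(d+\tau)}{\tau^2} + \frac{\sqrt{d}}{\tau}$, which is the claim; all the events invoked hold simultaneously with probability at least $1 - n^{-C'}$ after a union bound and renaming of constants.

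The main obstacle I anticipate is not any single estimate but making sure the \emph{same} set $J$ works for all three input lemmas at once with the right cardinality bound $n - |J| \le n/d$: Lemma \ref{lem:degreeset} only guarantees $n - |J| \le \frac{n}{2e(np_{\max}+1)} = \frac{n}{2d}$, while Lemma \ref{lem:remain} controls a possibly different high-degree set of size $\le n/\tau'$, and Lemma \ref{lem:core} implicitly wants to be applied on the graph restricted to a set where all degrees are bounded by (a constant times) $\tau$. Reconciling these — in particular verifying that after intersecting these sets the degrees on $J$ are still two-sided comparable to $\tau$ (so that $D_1, D_2$ are well-conditioned) and that $np_{\max} \lesssim \tau$ lets Lemma \ref{lem:core}'s bound collapse to $\sqrt{d}$ — is the delicate bookkeeping. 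Everything else is triangle inequality plus the scalar inequality for $x^{-1/2}$.
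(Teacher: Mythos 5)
Your proposal is correct and follows essentially the same route as the paper: define $J$ as the intersection of the Lemma \ref{lem:degreeset} set with the low-degree set from Lemma \ref{lem:remain} (each missing at most $n/(2d)$ nodes), bound $\opnorm{(A-P)_{JJ}}\lesssim\sqrt{d}$ via Lemma \ref{lem:core}, control $\opnorm{D_1^{-1/2}-D_2^{-1/2}}\lesssim \sqrt{d\log d}/\tau^{3/2}$ via the scalar inequality, and assemble the three terms of the standard telescoping decomposition. The only cosmetic difference is the ordering of the telescoping (the paper's middle factor in two of the three terms is $(A_\tau)_{JJ}$ or $(P_\tau)_{JJ}$ rather than $(P_\tau)_{JJ}$ in both), which yields the identical bound.
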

\begin{proof}
Let us use the notation $d_v=\sum_{u\in[n]}A_{uv}$ in the proof. Define the set $J_1=\left\{v\in[n]: d_v\leq C_1d\right\}$ for some sufficiently large constant $C_1>0$. Using Lemma \ref{lem:remain} and Lemma \ref{lem:core}, with probability at least $1-n^{-C'}$, we have
\begin{equation}
n-|J_1|\leq \frac{n}{2d},\label{eq:J11}
\end{equation}
and
\begin{equation}
\opnorm{(A-P)_{J_1J_1}}\leq C\sqrt{d}.\label{eq:J12}
\end{equation}
Let $J_2$ be the subset in Lemma \ref{lem:degreeset}, and then with probability at least $1-n^{-C'}$, $J_2$ satisfies
\begin{equation}
n-|J_2|\leq \frac{n}{2d},\label{eq:J21}
\end{equation}
and
\begin{equation}
|d_v-\mathbb{E}d_v|\leq C\sqrt{d\log d},\quad\text{for all }v\in J_2.\label{eq:J22}
\end{equation}
Define $J=J_1\cap J_2$. By (\ref{eq:J11}) and (\ref{eq:J21}), we have 
\begin{equation}
n-|J| = |(J_1\cap J_2)^c| \leq |J_1^c| + |J_2^c| = n-|J_1| + n-|J_2| \leq \frac{n}{d},
\end{equation}
and
\begin{equation}
\opnorm{(A-P)_{JJ}}\leq \opnorm{(A-P)_{J_1J_1}}\leq C\sqrt{d}. \label{eq:JAP}
\end{equation}
Moreover, (\ref{eq:J22}) implies
$$
\max_{v\in J}|d_v-\mathbb{E}d_v|\leq C\sqrt{d\log d}.
$$
Define $\bar{d}_v=\sum_{u\in[n]}P_{uv}$. Then,
\begin{equation}
\max_{v\in J}|d_v-\bar{d}_v|\leq \max_{v\in J}|d_v-\mathbb{E}d_v|+1\leq C\sqrt{d\log d}.\label{eq:Jdegree}
\end{equation}
Define $D_{\tau}=\text{diag}(d_1+\tau,...,d_n+\tau)$ and $\bar{D}_{\tau}=\text{diag}(\bar{d}_1+\tau,...,\bar{d}_n+\tau)$. We introduce the notation
$$R=(A_{\tau})_{JJ},\quad B=(D_{\tau})_{JJ}^{-1/2},\quad \bar{R}=(P_{\tau})_{JJ},\quad\bar{B}=(\bar{D}_{\tau})_{JJ}^{-1/2}.$$
Using (\ref{eq:Jdegree}), we have
$$\opnorm{B-\bar{B}}\leq \max_{v\in[n]}\left|\frac{1}{\sqrt{d_v+\tau}}-\frac{1}{\sqrt{\bar{d}_v+\tau}}\right|\leq C\frac{\sqrt{d\log d}}{\tau^{3/2}},$$
for some constant $C>0$. The definitions of $B$ and $\bar{B}$ implies $\opnorm{B}\vee\opnorm{\bar{B}}\leq\frac{1}{\sqrt{\tau}}$. We rewrite the bound (\ref{eq:JAP}) as $\opnorm{R-\bar{R}}\leq C\sqrt{d}$. Since all entries of $\mathbb{E}{A}_{\tau}$ is bounded by $(\tau+d)/n$, we have $\opnorm{\bar{R}}\leq \opnorm{\mathbb{E}{A}_{\tau}}\leq d+\tau$. Therefore, $\opnorm{R}\leq \opnorm{\bar{R}}+\opnorm{R-\bar{R}}\leq C(d+\tau)$. Finally,
\begin{eqnarray*}
&& \opnorm{(L(A_{\tau})-L(P_{\tau}))_{J\times J}} \\
&\leq& \opnorm{B}\opnorm{R}\opnorm{B-\bar{B}} + \opnorm{B}\opnorm{R-\bar{R}}\opnorm{\bar{B}} + \opnorm{B-\bar{B}}\opnorm{\bar{R}}\opnorm{\bar{B}} \\
&\leq& C\left(\frac{\sqrt{d\log d}(d+\tau)}{\tau^2}+\frac{\sqrt{d}}{\tau}\right).
\end{eqnarray*}
The proof is complete.
\end{proof}

\begin{proof}[Proof of Lemma \ref{lem:sp2}]
Recall that $d=np_{\max}+1$.
Following the proof of Theorem 8.4 in \cite{le2015sparse}, it can be shown that with probability at least $1-n^{-C'}$,  for any $J\subset[n]$ such that $n-|J|\leq n/d$,
$$\opnorm{L(A_{\tau})-L(P_{\tau})}\leq \opnorm{(L(A_{\tau})-L(P_{\tau}))_{JJ}}+C\left(\frac{1}{\sqrt{d}}+\sqrt{\frac{\log d}{\tau}}\right),$$
where the first term on the right side of the inequality above is bounded in Lemma \ref{lem:Lcore} by choosing an appropriate $J$. Hence, with probability at least $1-2n^{-C'}$,
$$\opnorm{L(A_{\tau})-L(P_{\tau})}\leq C\left(\frac{\sqrt{d\log d}(d+\tau)}{\tau^2}+\frac{\sqrt{d}}{\tau}\right)+C\left(\frac{1}{\sqrt{d}}+\sqrt{\frac{\log d}{\tau}}\right).$$
Choosing $\tau\in[C_1(1+np_{\max}),C_2(1+np_{\max})]$, the proof is complete.
\end{proof}

\end{document}